\DeclareMathOperator{\re}{Re}
\numberwithin{equation}{section}
\newtheorem{theorem}{Theorem}[section]
\newtheorem{lemma}[theorem]{Lemma}
\theoremstyle{remark}
\newtheorem{rem}[theorem]{Remark}
\newcommand{\hh}{\mathcal{H}}
\newcommand{\hil}{\mathcal{H}}
\newcommand{\natu}{\mathbb{N}}
\newcommand{\Mm}{\mathcal{M}}
\newcommand{\nul}{\mathcal{N}(}
\newcommand{\real}{\mathbb{R}}
\theoremstyle{definition}
\newtheorem{ex}[theorem]{Example}
\newcommand*{\Ge}{\geqslant}
\newcommand{\ran}{\mathcal{R}(}
\newcommand{\M}{{A^\prime}}
\begin{document}

\title{Reduced commutativity of moduli of operators}
   \author[P. Pietrzycki]{Pawe{\l} Pietrzycki}
   \subjclass[2010]{Primary 47B20; Secondary
47B37} \keywords{normal operator, quasinormal operator, operator convex function, Davis-Choi-Jensen  inequality, operator equation, operator inequality}
   \address{Wydzia{\l} Matematyki i Informatyki, Uniwersytet
Jagiello\'{n}ski, ul. {\L}ojasiewicza 6, PL-30348
Krak\'{o}w}
   \email{pawel.pietrzycki@im.uj.edu.pl}
   \begin{abstract}
In this paper, we investigate the question of when the equations  $A^{*s}A^{s}=(A^{*}A)^{s}$ for all $s \in S$, where $S$ is a finite set of positive integers, imply the quasinormality or normality of $A$. In particular, it is proved that if 
$S=\{p,m,m+p,n,n+p\}$, where $2\leq m < n$, then $A$ is quasinormal. Moreover, if $A$ is invertible and $S=\{m,n,n+m\}$, where $m \leq n$, then $A$ is normal. Furthermore, the case when $S=\{m,m+n\}$ and $A^{*n}A^n \leq (A^*A)^n$ is discussed. 
   \end{abstract}
   \maketitle
   \section{Introduction}
The class of bounded quasinormal operators was
introduced by A. Brown in \cite{brow}. A bounded operator $A$ on a (complex)  Hilbert space $\hh$ is said to be \textit{quasinormal} if $A(A^*A)=(A^*A)A$. Two different
definitions of unbounded quasinormal operators
appeared independently in \cite{kauf} and in
\cite{szaf}. As recently shown in \cite{jabl}, these
two definitions are equivalent. Following \cite{szaf},
we say that a closed densely defined operator $A$ in  $\mathcal{H}$ is {\em
quasinormal} if $A$ commutes with the spectral measure
$E$ of $|A|$, i.e $E(\sigma) A \subset AE(\sigma)$ for
all Borel subsets $\sigma$ of the nonnegative part of
the real line. By \cite[Proposition 1]{szaf}, a closed
densely defined operator $A$ in $\mathcal{H}$ is
quasinormal if and only if $U|A|\subset |A|U$, where
$A = U|A|$ is the polar decomposition of $A$ (cf.\
\cite[Theorem 7.20]{weid}). For more
information on quasinormal operators we refer the
reader to \cite{brow,conw,uch}, the bounded case, and to
\cite{kauf,szaf,maj,jabl,qext,uch}, the unbounded one.

In 1973 M. R. Embry published a very influential paper \cite{embry} concerning the Halmos-Bram criterion for subnormality. In particular, she  gave a characterization of the class of quasinormal operators in terms of powers of operators. Namely, a bounded operator $A$ in a Hilbert space is quasinormal  if and only if the following condition holds 
\begin{equation}\label{wst}
    A^{*n}A^{n}=(A^*A)^n\qquad \text{for all}\quad n\in \natu.
\end{equation}
%This characterization comes from the Halmos-Bram criterion for subnormality. 
This leads to the
following question:
 is it necessary to assume that the equality in \eqref{wst} holds for all $n\in\natu$? To be more
precise we ask  for which subset $S\subset \mathbb{N}$  the following system of operator equations: 
\begin{equation}\label{xukl}
 A^{*s}A^s=(A^*A)^s\qquad\text{for all}\quad s \in S,
\end{equation}
implies the quasinormality of $A$.

It is worth mentioning that in some sense similar problem was studied and solved in Group Theory. We say that a group $G$ is \textit{ $n$-Abelian} if $( xy)^n = x^ny^n$ for all $x, y\in G$.
We call a set of integers $S$ \textit{Abelian forcing} if whenever $G$ is a
group with the property that $G$ is $n$-Abelian for all $n$ in $S$, then $G$ is Abelian. Then we have the following theorem
\begin{theorem} $($cf.
\cite{gal}$)$ A set $S$ of integer is Abelian forcinig if and only if the greatest common
divisor of the integer $n( n - 1)$ as $n$ ranges over $S$ is $2$. 

\end{theorem}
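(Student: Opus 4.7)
The plan is to prove the two directions separately.

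For the \emph{necessity} (contrapositive), suppose $d:=\gcd\{n(n-1):n\in S\}>2$; I would construct a non-Abelian group that is $n$-Abelian for every $n\in S$. The starting point is the elementary equivalence $p\mid n(n-1)\iff n\equiv 0,1\pmod p$. For an odd prime $p\mid d$, take the Heisenberg group $H_p$ of $3\times 3$ upper-unitriangular matrices over $\mathbb{F}_p$: it is non-Abelian, nilpotent of class $2$, and of exponent $p$, so the class-$2$ collection formula $(xy)^n=x^ny^n[y,x]^{\binom{n}{2}}$ shows $H_p$ is $n$-Abelian iff $p\mid\binom{n}{2}$, i.e.\ iff $p\mid n(n-1)$; in particular it is $n$-Abelian for every $n\in S$. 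If $d$ has no odd prime factor yet $d>2$, then $4\mid d$, and the dihedral group $D_8$ serves as a non-Abelian witness: it has exponent $4$ and commutators of order $2$, making it $n$-Abelian precisely when $4\mid n(n-1)$. Since $n(n-1)$ is always even, whenever $d>2$ one of these two cases must occur, producing a counterexample.

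For the \emph{sufficiency}, assume $G$ is $n$-Abelian for every $n\in S$ with $d=2$; the goal is $2$-Abelianness, which is equivalent to Abelianness. Two derived identities drive the argument: (i)\ conjugating $(xy)^n=x^ny^n$ by $x^{-1}$ gives $[x^{n-1},y^n]=1$, so $(n{-}1)$-th powers commute with $n$-th powers for every $n\in S$; and (ii)\ Levi's identity $[x,y]^{n(n-1)}=1$ holds in any $n$-Abelian group, obtained by expanding $(xy)^n$ in two ways. From (ii), every commutator of $G$ has order dividing $\gcd\{n(n-1):n\in S\}=2$.

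The principal obstacle is completing the argument, because ``$[G,G]$ has exponent dividing $2$'' is not by itself enough to force $G$ Abelian, as $D_8$ from the necessity direction shows. Here one must exploit the fact that, because $4\nmid d=2$, there exists $n\in S$ with $n\not\equiv 0,1\pmod 4$, for which $\binom{n}{2}$ is odd. The plan is first to combine the relations in (i) across the various $n\in S$ to deduce that $G$ is nilpotent of class at most $2$; then the collection formula applies and reduces $(xy)^n=x^ny^n$ to $[y,x]^{\binom{n}{2}}=1$, which together with $[y,x]^2=1$ and the parity of $\binom{n}{2}$ yields $[y,x]=1$. The delicate core is the intermediate nilpotency-class-$2$ claim, which requires a coordinated iteration of the commutator identities (i) over all $n\in S$, essentially translating the Bezout relation $\sum a_i n_i(n_i-1)=2$ into a group-theoretic conclusion.
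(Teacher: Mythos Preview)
The paper does not supply a proof of this theorem at all; it is quoted as a motivational analogue and attributed to Gallian--Reid via the citation \cite{gal}. So there is no ``paper's own proof'' to compare against, and your proposal must be judged on its own merits.

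Your necessity argument is correct and standard: the Heisenberg group over $\mathbb{F}_p$ handles an odd prime divisor of $d$, and $D_8$ handles the case $4\mid d$.

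Your sufficiency argument, however, has a genuine gap exactly where you flag it. You derive the identity $[x^{n-1},y^n]=1$ correctly, but then assert ``Levi's identity'' $[x,y]^{n(n-1)}=1$ with only the phrase ``obtained by expanding $(xy)^n$ in two ways,'' and you explicitly leave the nilpotency-of-class-$2$ step as a ``delicate core'' to be done by some unspecified ``coordinated iteration.'' As written this is a plan, not a proof. In fact the missing step is less delicate than you suggest, and both pieces come from one observation: in any $n$-Abelian group one has $g^{n(n-1)}\in Z(G)$ for every $g$. Indeed, from $[x^{n-1},y^n]=1$ one sees that $(n-1)$st powers commute with $n$th powers; since $g=g^{n}\cdot g^{1-n}$ shows $G=G^{n}G^{n-1}$, and $g^{n(n-1)}$ is simultaneously an $n$th power and an $(n-1)$st power, it commutes with both factors and hence with all of $G$. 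Taking $\gcd$ over $n\in S$ gives $g^{2}\in Z(G)$ when $d=2$, so $G/Z(G)$ has exponent $2$ and is Abelian; thus $G$ has class at most $2$, and your collection-formula endgame (picking $n\in S$ with $\binom{n}{2}$ odd) then finishes the argument. If you insert this centrality argument, your outline becomes a complete proof.
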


In  paper \cite{uch}
M. Uchiyama proved that if bounded  operator $A$ in Hilbert space is compact (in particular, is in finite dimensional space) or subnormal then the single equality 
\begin{equation}\label{xrow}
    A^{*n}A^n=(A^*A)^n
\end{equation}
for $n\geq 2$ implies quasinormality of $A$. He also proved that, if one of the following conditions holds 
\begin{itemize}
\item[(i)] $A$ is a hyponormal operator and satisfies  \eqref{xukl} with $S=\{n,n+1\}$, where $n\in \natu$,
\item[(ii)] $A$ is an operator in separable Hilbert space and satisfies  \eqref{xukl} with $S=\{k,k+1,l,l+1\}$, where $k,l\in \natu $ and $k<l$,
\end{itemize}
then $A$ is quasinormal.
It turns out that  we can replace the system of equations in condition (i) by single equation  \eqref{xrow} (cf. Theorem \ref{hyp}) and remove the assumption of separability in condition (ii) (cf. Theorem \ref{ucc}). Moreover, he obtains analogical results for densely defined operators on the assumption that some inclusion of domains of the operators $T^*T$, $TT^*$ and $T^*TP$, ($P$ is the projection onto the closure of range of $T$) holds. Several years later in the paper  \cite{uch2} on occasion of investigating  properties  class of log-hyponormal operators, he showed that single equality   \eqref{xrow}  implies  quasinormality in this class. This proof contains the proof of the fact that bounded operator in Hilbert space which satisfies  \eqref{xukl} with $S=\{2,3\}$ is quasinormal. As was mentioned in the paper  \cite{jabl} in the case of bounded operators, this
characterization has been known for specialists working in this area since the late '80s. This characterization was independently discovered by A. A. S. Jibril \cite[Proposition 13]{Jib}.  Unfortunately, this paper
contains several errors. Z. J. Jab{\l}o{\'n}ski, I. B. Jung, and J. Stochel \cite{jabl} extended this characterization to densely defined operators. Their proof makes use of  the technique of bounded vectors. They also gave three Examples of the non-quasinormal operator which satisfies the single equation \eqref{xrow}.
The first example is related to Toeplitz operators on the
Hardy space $H^2$, while two others are linked to weighted shifts on a directed tree. The class of  weighted shifts on a directed tree was introduced in \cite{memo} and intensively studied
since then \cite{9,geh,chav,planeta,bdp,adv,mart}. The class is a
source of interesting examples (see e.g., \cite{dym,9,ja,trep,triv,abs,sque,jabl}). 

In particular, they  showed that for every integer
$n\Ge 2$, there exists a weighted shift $A$ on a
rooted and leafless directed tree with one branching
vertex such that
   \begin{equation}\label{qqq}
\text{$(A^{*} A)^n=A^{*n}A^{n}$ and $(A^{*} A)^k \neq
A^{*k}A^{k}$ for all $k\in\{2,3,\ldots\}\setminus
\{n\}$.}
   \end{equation}
 \cite[Example 5.5]{jabl}.  
It remained an open question as to whether such
construction is possible on a rootless and leafless
directed tree. This is strongly related to the
question of the existence of a composition operator
$A$ in a $L^2$-space (over a $\sigma$-finite measure
space) which satisfies \eqref{qqq}. It is shown  that the answer is in the affirmative. In fact, the author showed that for every integer $n\Ge 2$ there exists a (necessarily non-quasinormal) weighted shifts
$A$ on a rootless and leafless directed tree with one
branching vertex which satisfies \eqref{qqq} (cf.\
 \cite[Theorem 5.3.]{ja}). This combined with the fact
that every weighted shift on a rootless directed tree
with nonzero weights is unitarily equivalent to a
composition operator in an $L^2$-space (see
\cite[Theorem 3.2.1]{memo} and \cite[Lemma 4.3.1]{9})
yields examples of composition operators satisfying. It was observed in \cite[Theorem 3.3]{ja} that in the class of bounded
injective bilateral weighted shifts, the single
equality \eqref{xrow} with $n\Ge 2$ does
imply quasinormality. This
is no longer true for unbounded ones even for $k=3$
(cf.\  \cite[Example 4.4.]{ja}).

In this paper we will show that  operator $A$ is quasinormal iff it satisfies  \eqref{xukl} with $S=\{p,m,m+p,n,n+p\}$ for $p,m,n\in\natu$ (cf. Theorem \ref{gll}). This  theorem generalizes a characterization of quasinormality of bounded
 operators given in \cite[Theorem 2.1.]{uch} and \cite[Proposition 13]{Jib}. The proof of this characterization  makes use of the theory of 
operator convex  functions and related to this theory Davis-Choi-Jensen inequality (cf. Theorem \ref{lohe}). In the case $S=\{p,q,p+q,2p,2p+q\}$ we will give an alternative  proof which is completely different and fits nicely into
our framework (cf. Theorem \ref{jjj}). Moreover, if $A$ is invertible then \eqref{xukl} with $S=\{m, n, n+m\}$ for $m,n\in \natu$ also implies quasinormality of $A$ (cf. Theorem \ref{inv}). We obtain a new characterization of the normal operators which resembles that for the quasinormal operators.

   \section{Preleminaries}
In this paper, we use the following notation. The
fields of rational, real and complex
numbers are denoted by $\mathbb{Q}$,
$\mathbb{R}$ and $\mathbb{C}$, respectively. The
symbols $\mathbb{Z}$, $\mathbb{Z}_{+}$, $\mathbb{N}$
and $\mathbb{R}_+$ stand for the sets of integers,
nonnegative integers, positive integers and
nonnegative real numbers, respectively.

All Hilbert spaces considered in this paper are assumed to be complex. Let $A$ be a linear operator in a complex Hilbert
space $\mathcal{H}$. Denote by  $A^*$  the adjoint
of $A$.  We write $\boldsymbol{B}(\mathcal{H})$ and $\boldsymbol{B}_+(\mathcal{H})$, for the
set of all bounded operators   and positive operators in $\mathcal{H}$, respectively.
The following fact  follows from The Spectral Theorem
and plays an important role in our further
investigations.
\begin{theorem} \label{rsn}$($cf. \cite{rud}$)$ Let $n\in \natu$.
Commutants of a positive operator and it's $n$-th root coincides.
\end{theorem}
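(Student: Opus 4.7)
The statement to prove is that $\{T\}' = \{T^{1/n}\}'$, where $T$ is a (bounded) positive operator on $\mathcal{H}$, $T^{1/n}$ is its unique positive $n$-th root, and primes denote commutants in $\boldsymbol{B}(\hh)$.

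The plan is to dispatch the two inclusions separately. The inclusion $\{T^{1/n}\}' \subseteq \{T\}'$ is immediate and essentially formal: if $B$ commutes with $T^{1/n}$, then by associativity $B$ commutes with $(T^{1/n})^n = T$. So the content lies entirely in the opposite inclusion.

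For $\{T\}' \subseteq \{T^{1/n}\}'$, fix $B \in \boldsymbol{B}(\hh)$ with $BT = TB$. I would invoke continuous functional calculus. The spectrum $\sigma(T)$ is a compact subset of $[0, \|T\|]$, and the function $f(\lambda) = \lambda^{1/n}$ is continuous on this set. By the Weierstrass approximation theorem, there exists a sequence of polynomials $p_k$ with $p_k \to f$ uniformly on $\sigma(T)$; consequently $p_k(T) \to f(T) = T^{1/n}$ in operator norm (by the norm-preserving property of the continuous functional calculus). Now $B$ commutes with every polynomial in $T$, so $B p_k(T) = p_k(T) B$ for each $k$. Letting $k \to \infty$ and using continuity of multiplication in the norm topology yields $B T^{1/n} = T^{1/n} B$, which is the desired conclusion.

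There is no serious obstacle here: once one has the spectral theorem and the uniqueness of the positive $n$-th root, the argument reduces to the routine fact that the commutant of $T$ in $\boldsymbol{B}(\hh)$ is norm-closed and contains all polynomials in $T$, hence contains the norm-closure of that polynomial algebra, which by Weierstrass exhausts $C(\sigma(T))$-functional calculus images. An equivalent path would be to note that $B$ commutes with every spectral projection $E(\sigma)$ of $T$ and then write $T^{1/n} = \int \lambda^{1/n}\, dE(\lambda)$, but the polynomial-approximation route is the shortest and does not require invoking the full spectral measure.
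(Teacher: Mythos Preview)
Your argument is correct. The paper does not actually supply a proof of this statement: it is recorded as a preliminary fact, attributed to Rudin, with the remark that it ``follows from the Spectral Theorem.'' Your polynomial-approximation route via the continuous functional calculus is precisely one standard way to cash out that remark, so there is nothing to compare---you have filled in what the paper deliberately left as a citation.
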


A linear map $\varPhi : \mathcal{A} \rightarrow \mathcal{ B}$ between $C^
*$-algebras is said to be positive if $\varPhi(A) \geq 0$ whenever
$A \geq 0$. It is unital if  $\varPhi$ preserves the identity.

Let $J\subset \real$ be an interval. A function $f : J \rightarrow \real$ is said to be
\begin{itemize}
\item[(i)] \textit{matrix monotone of degree $n$ } or \textit{ $n$-monotone}, if, for every selfadjoint $n$-dimension matrix $A$ and $B$, where $n\in \natu$ with $\sigma(A),\sigma(B)\subset J$ inequality  $A\leq B$ implies $f(A)\leq f(B)$,
\item[(ii)] \textit{operator monotone} or \textit{matrix monotone}, if it is $n$-monotone for every $n\in \natu$,
\item[(iii)] \textit{matrix convex of degree $n$ } or \textit{ $n$-convex}, if for every selfadjoint $n$-dimension matrix $A$ and $B$, where $n\in \natu$ with $\sigma(A),\sigma(B)\subset J$ 
\begin{equation*}
    f(tA+(1-t)B)\leq tf(A)+(1-t)f(B) \quad \text{for all}\:\: t\in[0,1],
\end{equation*}
\item[(iv)] \textit{operator convex} or \textit{matrix convex}, if it is $n$-monotone for every $n\in \natu$.
\end{itemize}

In 1934 K. L\"owner \cite{l10} proved that a function defined on
an open interval is operator monotone, if and only if it allows an analytic
continuation into the complex upper half-plane, that is an analytic continuation to a Pick function. The class of operator monotone functions is an important
class of real-valued functions and it has various applications in other branches of mathematics. This concept is closely related to operator convex functions which was  introduced by F. Kraus in a  paper \cite{kraus}. The  operator monotone functions and operator convex functions have very important
 properties, namely, they admit integral representations with respect to suitable
Borel measures. In particular, a continuous function $f : [0, \infty) \rightarrow [0, \infty)$ is operator monotone if
and only if there is a finite Borel measure $\mu$ on $[0,\infty)$ such that
 $\int_0^\infty \frac{1}{1+\lambda^2}d\mu(\lambda)<\infty$ and
\begin{equation}\label{repbol}
f(t)=\alpha +\beta t+\int_0^\infty \Big(\frac{\lambda}{1+\lambda^2}-\frac{1}{t+\lambda}\Big)d\mu(\lambda),
\end{equation}
where $\alpha\in \mathbb{R}$ i $\beta \geq 0$. By
the Bendat-Sherman formula (see \cite{ben,hans2}) operator convex function $f:(-1,1)\rightarrow \real$ admits an integral representation
\begin{equation}\label{sher}
 f(t)=\alpha +\beta t+\int_{-1}^{1} \frac{t^2}{1-t\lambda}d\mu(\lambda),
\end{equation}
with $\alpha\geq0$  and $\mu$ is a positive measure. We give below example of a function which is operator monotone.
\begin{ex}\label{ex}
The  function $x^p$ for $p\in (0,1)$ is operator monotone and has an integral
representation

 \begin{equation*}
x^p=\frac{\sin p\pi}{\pi}\int_0^\infty \frac{x\lambda^{p-1}}{x+\lambda}d\mu(\lambda).
\end{equation*}

\end{ex}
The fact that function from Example \ref{ex} is operator monotone is well known as L\"owner-Heinz inequality.

\begin{theorem}$($L\"owner-Heinz inequality \cite{l9,l10}$)$. \label{lohe}
Let $A$, $B$ be  bounded positive  operators on    $\mathcal{H}$ such that $0\leq B \leq A$. If $0\leq p \leq 1$ then
$ B^p \leq A^p$.
\end{theorem}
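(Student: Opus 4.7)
The plan is to exploit the integral representation of $x^p$ recorded in Example \ref{ex}, which reduces the operator inequality $B^p\le A^p$ to a resolvent inequality between $A$ and $B$. Since the function $x\mapsto x^p$ is norm-continuous on $[0,\infty)$ restricted to any compact interval, I may first replace $A$ and $B$ by the strictly positive perturbations $A+\varepsilon I$ and $B+\varepsilon I$, carry out the argument for them, and let $\varepsilon\to 0^+$ at the end. Thus I may assume $A,B>0$ so that all inverses appearing below exist. The endpoint cases $p=0$ and $p=1$ are trivial, so I focus on $p\in(0,1)$.

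The key auxiliary step is the classical inversion fact: if $0<X\le Y$ are bounded positive invertible operators, then $Y^{-1}\le X^{-1}$. Indeed, $Y-X\ge 0$ yields
\begin{equation*}
X^{-1/2}YX^{-1/2}-I \;=\; X^{-1/2}(Y-X)X^{-1/2}\;\ge\;0,
\end{equation*}
hence $X^{-1/2}YX^{-1/2}\ge I$; its inverse $X^{1/2}Y^{-1}X^{1/2}$ therefore satisfies $X^{1/2}Y^{-1}X^{1/2}\le I$, and conjugating by $X^{-1/2}$ gives $Y^{-1}\le X^{-1}$. Crucially, this argument invokes only the square root of the single positive operator $X$, supplied by the spectral theorem, so there is no circular appeal to L\"owner-Heinz itself.

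Applying the inversion fact with $X=B+\lambda I$ and $Y=A+\lambda I$ for each $\lambda>0$, and rewriting $t/(t+\lambda)=1-\lambda/(t+\lambda)$ via the Borel functional calculus, one obtains
\begin{equation*}
B(B+\lambda I)^{-1} \;\le\; A(A+\lambda I)^{-1}\qquad\text{for every }\lambda>0.
\end{equation*}
Integrating both sides against the positive measure $\frac{\sin p\pi}{\pi}\lambda^{p-1}\,d\lambda$ on $(0,\infty)$ and invoking the operator form of Example \ref{ex} yields $B^p\le A^p$, as desired.

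The main technical obstacle is to justify that the operator-valued improper integral $\frac{\sin p\pi}{\pi}\int_0^\infty T\lambda^{p-1}(T+\lambda I)^{-1}\,d\lambda$ really represents $T^p$ for every bounded positive $T$ in a suitable (e.g., strong operator) topology. This follows by applying the continuous functional calculus to the pointwise scalar identity on the compact spectrum of $T$, combined with a dominated-convergence argument to interchange integration with the functional calculus. Once this representation is in hand, positivity of both the integrand difference $A(A+\lambda I)^{-1}-B(B+\lambda I)^{-1}$ and of the measure $\lambda^{p-1}\,d\lambda$ makes the passage from the pointwise inequality to the integrated inequality automatic.
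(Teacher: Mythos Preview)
The paper does not actually supply a proof of Theorem~\ref{lohe}; it is quoted as a classical result with references to L\"owner and Heinz, and the surrounding discussion merely points out that the operator monotonicity of $x\mapsto x^p$ on $[0,\infty)$ is the content of this inequality. There is therefore no ``paper's own proof'' to compare against.

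Your argument is the standard resolvent-integral proof and is correct. The reduction to strictly positive $A,B$ via an $\varepsilon$-perturbation, the order-reversal of the map $T\mapsto T^{-1}$ on positive invertibles (proved, as you note, using only a single square root and hence not circular), and the scalar integral identity of Example~\ref{ex} combine exactly as you describe. Two minor remarks: first, make sure you invoke only the displayed integral formula from Example~\ref{ex} and not the accompanying assertion that $x^p$ is operator monotone, since the latter \emph{is} Theorem~\ref{lohe}; you already do this, but it is worth stating explicitly. Second, the justification that the operator-valued integral converges (say, in the strong operator topology) and agrees with $T^p$ can be handled most cleanly by fixing $h\in\mathcal{H}$ and applying the scalar identity to the spectral measure $\langle E(\cdot)h,h\rangle$ of $T$, together with Fubini's theorem; this avoids any delicate interchange issues.
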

The other two inequalities related to operator monotone and convex functions are Hansen inequality and Davis-Choi-Jensen inequality. The first of this has been established in \cite{hansen} by F. Hansen. In \cite{uch} M. Uchiyama gave a necessary and sufficient condition for the equality in the Hansen inequality and use it to show that \eqref{xukl} with $S=\{k,k+1,l,l+1\}$  implies quasinormality of $A$ in separable Hilbert space. The key ingredient of its
proof is the integral representation of operator monotone functions \eqref{repbol}.

\begin{theorem}$($Hansen inequality, cf. \cite{hansen, uch}$)$
\label{hans} Let $f:[0,\infty)\rightarrow \mathbb{R}$ be an operator monotone  function with $f(0)\geq 0$. Suppouse that $A$ is a bounded positive operator and $P$ a non trivial projection. Then we have  
\begin{equation*}
Pf(A)P\geq f(PAP).
\end{equation*}
Moreover the equality hold, only in the case of $PA=AP$ and $f(0)=0$, if $f$ is not a linear function.
\end{theorem}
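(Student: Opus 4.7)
The plan is to derive this from the integral representation \eqref{repbol} combined with an operator Jensen inequality for resolvents. Using $f(0)\geq 0$ one rewrites \eqref{repbol} in the equivalent form
\[
f(t) = f(0) + \beta t + \int_0^\infty \frac{t}{\lambda(t+\lambda)}\, d\mu(\lambda),\qquad \beta\geq 0,
\]
with $\mu$ a positive Borel measure on $(0,\infty)$. Since the inequality is linear in $f$, it suffices (by a dominated-convergence argument) to verify it separately on each of the elementary blocks: the nonnegative constant $f(0)$, the linear term $\beta t$, and the functions $f_\lambda(t)=\tfrac{1}{\lambda}-\tfrac{1}{t+\lambda}$ for $\lambda>0$.

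The constant and linear pieces are immediate from $P\leq I$ and $P(\beta A)P=\beta PAP$. The substantive step is $f_\lambda$, which after cancelling $\tfrac{1}{\lambda}P$ on both sides reduces to the resolvent comparison between $P(A+\lambda)^{-1}P$ and $(PAP+\lambda)^{-1}$ on $P\mathcal{H}$. The cleanest way to obtain this is via the block-matrix positivity
\[
\begin{pmatrix} A+\lambda & I\\ I & (A+\lambda)^{-1}\end{pmatrix}=\begin{pmatrix}(A+\lambda)^{1/2}\\ (A+\lambda)^{-1/2}\end{pmatrix}\begin{pmatrix}(A+\lambda)^{1/2} & (A+\lambda)^{-1/2}\end{pmatrix}\geq 0;
\]
conjugating by $\mathrm{diag}(V,V)$, where $V\colon P\mathcal{H}\hookrightarrow\mathcal{H}$ is the isometric inclusion, preserves positivity, and the Schur-complement criterion applied to the compressed $2\times 2$ block yields the desired resolvent inequality.

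For the moreover clause I would trace where equality can occur in each step. Equality in the Schur-complement step forces $V^{*}(A+\lambda)V$ and $V^{*}(A+\lambda)^{-1}V$ to be mutual inverses, which via the factorization above forces $P=VV^{*}$ to commute with $A+\lambda$, hence with $A$. Since ``$f$ is not linear'' is precisely the condition that $\mu\neq 0$, the positivity of $\mu$ rules out cancellations between distinct values of $\lambda$, so equality across the integral forces $[P,A]=0$. Once commutativity is in hand, direct computation gives $Pf(A)P=f(A)P$ while $f(PAP)=f(A)P+f(0)(I-P)$; the remaining equality on $P^{\perp}\mathcal{H}$ then forces $f(0)=0$.

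I expect the main obstacle to be the moreover clause rather than the inequality itself. In particular, rigorously deducing $[P,A]=0$ from equality in the Schur-complement step, and ruling out accidental ``cancellations'' across distinct values of $\lambda$ in the integral, are the two delicate points where the nontriviality of $\mu$ (from nonlinearity of $f$) has to be used essentially.
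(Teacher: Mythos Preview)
The paper does not include its own proof of this statement; Theorem~\ref{hans} is quoted from the literature (cf.\ \cite{hansen,uch}) and stated without argument, so there is no in-paper proof to compare against. Your approach via the integral representation together with the Schur-complement resolvent estimate is essentially the classical argument in those references, and your treatment of the equality case mirrors Uchiyama's.

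One point worth flagging: as printed, the inequality carries the wrong sign, and your own computations already reveal this. The Schur-complement step yields $V^{*}(A+\lambda)^{-1}V \ge (V^{*}AV+\lambda)^{-1}$, hence $Pf_\lambda(A)P \le f_\lambda(PAP)$; likewise ``$P\le I$'' gives $f(0)P \le f(0)I$ for the constant block. Both push toward $Pf(A)P \le f(PAP)$, which is the correct Hansen inequality (a $2\times 2$ test with $f(t)=\sqrt{t}$, $A=\bigl(\begin{smallmatrix}1&1\\1&1\end{smallmatrix}\bigr)$, $P=\bigl(\begin{smallmatrix}1&0\\0&0\end{smallmatrix}\bigr)$ gives $Pf(A)P=\tfrac{1}{\sqrt 2}P$ while $f(PAP)=P$, confirming $\le$ and refuting $\ge$). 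With the sign corrected your proof goes through: the block-inverse (Schur complement) formula shows that $V^{*}(A+\lambda)^{-1}V=(V^{*}(A+\lambda)V)^{-1}$ is equivalent to the vanishing of the off-diagonal block of $A$ relative to $P\mathcal H\oplus P^{\perp}\mathcal H$, i.e.\ $[P,A]=0$; and your ``no cancellation'' concern is handled by observing that the $\lambda$-integrand is a continuous family of nonnegative operators, so a vanishing integral against a nonzero positive $\mu$ forces vanishing on $\mathrm{supp}\,\mu$, whence $[P,A]=0$ from a single $\lambda$. The remaining deduction $f(0)=0$ then follows exactly as you wrote, since $f(PAP)-Pf(A)P=f(0)(I-P)$ once $[P,A]=0$.
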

Now we formulate the Jensen operator inequality (Davis-Choi-Jensen inequality)  due to Davis
 \cite{davi} and Choi \cite{choi}. In \cite{petz} D. Petz gave
 a necessary and sufficient condition
for the equality in the Jensen's operator inequality using integral representation of operator convex functions \eqref{sher}. 
\begin{theorem}$($Davis-Choi-Jensen inequality$)$\label{petz} Let $\mathcal{A}$ and $\mathcal{B}$ be $C^*$-algebras with unit and let
$\varPhi :\mathcal{A}\rightarrow \mathcal{B}$ be a unital positive linear map. If $f:(\alpha,\beta)\rightarrow \real$ is an operator convex function then
\begin{equation*}
f(\varPhi(a)) \leq \varPhi(f(a))    
\end{equation*}
for every $a=a^*\in \mathcal{A}$ with $\sigma(a)\subset (\alpha,\beta)$. Moreover, if $f$ is nonaffine then the equality holds if and only if $\varPhi$ restricted to the subalgebra generated by $\{a\}$ is multiplicative.

\end{theorem}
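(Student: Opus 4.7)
The plan is to follow the approach of D. Petz: reduce the general statement to a family of test functions provided by the Bendat--Sherman representation \eqref{sher}. After an affine rescaling of $(\alpha,\beta)$ to $(-1,1)$ (which preserves operator convexity), every operator convex $f$ on $(-1,1)$ can be written as
\begin{equation*}
f(t)=\alpha'+\beta' t+\int_{-1}^{1}\frac{t^{2}}{1-\lambda t}\,d\mu(\lambda),
\end{equation*}
with $\mu\geq 0$. The affine summand $\alpha'+\beta' t$ produces an \emph{equality} in the Jensen inequality because $\varPhi$ is unital and linear, so the entire content reduces to the kernel inequality
\begin{equation*}
\frac{\varPhi(a)^{2}}{1-\lambda\varPhi(a)}\leq \varPhi\!\left(\frac{a^{2}}{1-\lambda a}\right),\qquad |\lambda|\leq 1,
\end{equation*}
whose integration against $\mu$ yields the general case.

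For each fixed $\lambda$, the algebraic identity $t^{2}/(1-\lambda t)=\lambda^{-2}\bigl((1-\lambda t)^{-1}-1-\lambda t\bigr)$ shows that the kernel inequality is equivalent to the resolvent inequality
\begin{equation*}
(1-\lambda\varPhi(a))^{-1}\leq \varPhi\bigl((1-\lambda a)^{-1}\bigr).
\end{equation*}
This is a Jensen-type inequality for the operator convex function $s\mapsto (1-\lambda s)^{-1}$. Since $a$ is self-adjoint, the restriction of $\varPhi$ to the commutative $C^{*}$-subalgebra generated by $\{a,1\}$ is a positive unital map, and one may combine the spectral representation of this restriction with the fact that $(1-\lambda t)^{-1}$ is a positive multiple of the operator monotone function $(1/\lambda-t)^{-1}$ (operator monotonicity on the appropriate interval follows from \eqref{repbol}) to obtain the resolvent inequality; equivalently, Hansen's inequality (Theorem~\ref{hans}) supplies it directly.

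For the equality clause, assume $f$ is nonaffine, so $\mu$ is not the zero measure. If $\varPhi(f(a))=f(\varPhi(a))$, then the pointwise kernel inequality must be an equality for $\mu$-almost every $\lambda$. Since both sides of the resolvent inequality are norm-analytic in $\lambda$ in a neighbourhood of $0$ and $\mathrm{supp}\,\mu$ contains an accumulation point, the identity theorem propagates equality to every sufficiently small $\lambda$. Expanding each side as a power series in $\lambda$ and matching the coefficient of $\lambda^{n}$ yields $\varPhi(a^{n})=\varPhi(a)^{n}$ for every $n\in\natu$. Hence $\varPhi$ is multiplicative on all polynomials in $a$ and $1$, and by norm continuity and the Stone--Weierstrass theorem on the whole $C^{*}$-subalgebra generated by $\{a\}$. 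The converse is immediate from continuous functional calculus.

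The main obstacle I expect is the equality direction, specifically upgrading ``equality $\mu$-almost everywhere in $\lambda$'' to multiplicativity on the generated subalgebra. This is precisely where nonaffineness is indispensable: if $\mu\equiv 0$, the integral part of \eqref{sher} disappears, the Jensen inequality degenerates to a linear identity, and no multiplicativity can be deduced. Using joint analyticity of the two sides of the resolvent inequality (rather than a bare support argument on $\mu$) is what allows one to recover \emph{every} moment $\varPhi(a^{n})$ from equality on a set of positive $\mu$-measure.
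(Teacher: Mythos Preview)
The paper does not supply a proof of this theorem; it is quoted from the literature (Davis and Choi for the inequality, Petz for the equality clause) and invoked as a black box in the proof of Lemma~\ref{wak}. So there is no in-paper argument to compare your proposal against.

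Your inequality direction follows the standard route and is correct in outline, modulo two small repairs: the algebraic identity $t^{2}/(1-\lambda t)=\lambda^{-2}\bigl((1-\lambda t)^{-1}-1-\lambda t\bigr)$ is singular at $\lambda=0$, where the kernel inequality is simply Kadison--Schwarz $\varPhi(a)^{2}\leq\varPhi(a^{2})$ and needs a separate line; and Theorem~\ref{hans} as stated in the paper covers only compressions by a projection, so to invoke it you must first observe that $\varPhi$ restricted to the commutative $C^{*}(a,1)$ is automatically completely positive and hence dilates via Stinespring to such a compression.

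The equality direction has a genuine gap. Your argument requires $\mathrm{supp}\,\mu$ to have an accumulation point so that the identity theorem applies, but this fails already for $f(t)=t^{2}$, whose Bendat--Sherman measure in \eqref{sher} is the point mass $\delta_{0}$. Knowing equality at finitely many $\lambda$ together with the one-sided operator inequality and analyticity does \emph{not} force equality on a neighbourhood (scalar model: $\lambda\mapsto\lambda^{2}\geq 0$ vanishes only at the origin), so your power-series matching step is unjustified. Petz's actual argument avoids analyticity: a single $\lambda_{0}\in\mathrm{supp}\,\mu$ suffices. In the Stinespring picture $\varPhi(\,\cdot\,)=V^{*}\pi(\,\cdot\,)V$ for $\varPhi|_{C^{*}(a,1)}$, equality at $\lambda_{0}$ reads $P\pi(x)^{-1}P=(P\pi(x)P)^{-1}$ with $P=VV^{*}$ and $x=1-\lambda_{0}a>0$ (respectively $P\pi(a)^{2}P=(P\pi(a)P)^{2}$ when $\lambda_{0}=0$); a block-matrix Schur-complement computation then forces the off-diagonal part of $\pi(x)$ relative to $P$ to vanish, so $P$ commutes with $\pi(x)$ and hence with all of $\pi(C^{*}(a,1))$, which is exactly multiplicativity of $\varPhi$ on that subalgebra.
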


 \section{A characterization of quasinormality}
   
 In this section, we obtain new characterizations of quasinormality in terms of the system of equations \eqref{xukl}. We first show that  the assumption of separability of \cite[Theorem 2.1.]{uch} can be removed. The following lemma was suggested to us by Professor Jan Stochel.
 
  \begin{lemma}\label{yyy}
Let $S \subset \mathbb{N}$ be a nonempty set such that 
    \begin{align} \label{quoo}
\begin{minipage}{70ex} 
any bounded operator $A$ on a separable  Hilbert space that satisfies \eqref{xukl} is quasinormal. 
\end{minipage}
    \end{align}
Then \eqref{quoo} remains true for  any Hilbert space.
\end{lemma}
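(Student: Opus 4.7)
The plan is to reduce the general case to the separable case by a standard invariant-subspace argument: we will show that for every $x\in\mathcal{H}$, the identity $A(A^*A)x=(A^*A)Ax$ holds by restricting $A$ to a suitable separable reducing subspace containing $x$ and applying the hypothesis there.

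First I would fix an arbitrary $x\in\mathcal{H}$ and construct a closed separable subspace $\mathcal{K}\subset\mathcal{H}$ containing $x$ which is invariant under both $A$ and $A^{*}$. This is done by the usual iterative procedure: set $\mathcal{K}_0:=\{x\}$ and define $\mathcal{K}_{n+1}$ as the $\mathbb{Q}+i\mathbb{Q}$-linear span of $\mathcal{K}_n\cup A\mathcal{K}_n\cup A^{*}\mathcal{K}_n$; then $\mathcal{K}:=\overline{\bigcup_{n\in\natu}\mathcal{K}_n}$ is a closed separable subspace, it contains $x$, and by construction $A\mathcal{K}\subset\mathcal{K}$ and $A^{*}\mathcal{K}\subset\mathcal{K}$. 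In particular, $\mathcal{K}$ reduces $A$.

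Next, let $T:=A|_{\mathcal{K}}\in\boldsymbol{B}(\mathcal{K})$. Since $\mathcal{K}$ reduces $A$, we have $T^{*}=A^{*}|_{\mathcal{K}}$, and consequently, for every $s\in S$,
\begin{equation*}
T^{*s}T^{s}=(A^{*s}A^{s})|_{\mathcal{K}}=((A^{*}A)^{s})|_{\mathcal{K}}=(T^{*}T)^{s}.
\end{equation*}
Thus $T$ is a bounded operator on the separable Hilbert space $\mathcal{K}$ satisfying \eqref{xukl}, so by the hypothesis \eqref{quoo}, $T$ is quasinormal, i.e.\ $T(T^{*}T)=(T^{*}T)T$.

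Finally, evaluating the last equality at $x\in\mathcal{K}$ gives $A(A^{*}A)x=(A^{*}A)Ax$. Since $x\in\mathcal{H}$ was arbitrary, $A$ commutes with $A^{*}A$, i.e.\ $A$ is quasinormal. There is no real obstacle here; the only small point to be careful about is making sure the subspace is simultaneously $A$- and $A^{*}$-invariant (so that $T^{*}$ really is the restriction of $A^{*}$), which is why both $A\mathcal{K}_n$ and $A^{*}\mathcal{K}_n$ must be added at each stage of the construction.
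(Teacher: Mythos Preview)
Your proof is correct and follows essentially the same approach as the paper's: both arguments construct, for an arbitrary vector, a separable reducing subspace containing it (the paper takes the closure of all words in $A$ and $A^{*}$ applied to the vector, while you build it iteratively with rational spans, but these yield the same subspace), restrict $A$ to it, apply the separable hypothesis, and evaluate the quasinormality identity at the chosen vector.
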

\begin{proof}
Let $\mathcal{H}$ be a  Hilbert space of any dimension. For every $f\in \mathcal{H}$, we define the separable subspace $\Mm_f$ of $\hil$ by
 \begin{equation*}
    \Mm_f:=\overline{\{{A^*}^{i_k}{A}^{j_k}\cdots{A^*}^{i_1}{A}^{j_1}f \colon (i_1, \ldots,i_k), (j_1, \ldots, j_k) \in \mathbb{Z}_+^k, \, k \in \natu\}}
 \end{equation*}
Note that  $\Mm_f$ reduces $A$ and operator $A|_{\Mm_f}$ also satisfies   \eqref{xukl}. Hence, by  \eqref{quoo}, $A|_{\Mm_f}$ is quasinormal. As a consequence, we have
 \begin{equation*}
   {A^*}|_{\Mm_f}{A}|_{\Mm_f}A|_{\Mm_f}= A|_{\Mm_f}{A^*}|_{\Mm_f}{A}|_{\Mm_f}.
 \end{equation*}
 In particular, applying the above
 to vector $f$, we see that
 \begin{equation*}
   {A^*}{A}Af= A{A^*}{A}f.
 \end{equation*}
 Since vector $f$ was chosen arbitrarily  $A$ is quasinormal.
\end{proof}

We are now ready to show that the assumption of separability of \cite[ Theorem 2.1.]{uch} can be removed. 

\begin{theorem}\label{ucc}
Let $k,l\in \natu$ be such that  $l<k$ and  $A$ be a bounded  operator on $\hil$. Then the following conditions are equivalent:
\begin{itemize}
 \item[(i)] operator $A$ is quasinormal,
\item[(ii)] operator $A$ satisfies  \eqref{xukl} with $S=\{l,l+1,k,k+1\}$.
\end{itemize}
\end{theorem}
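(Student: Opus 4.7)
The proof splits into the two implications. The direction (i) $\Rightarrow$ (ii) is immediate: Embry's characterization \eqref{wst} shows that every quasinormal operator $A$ satisfies $A^{*n}A^n = (A^*A)^n$ for every $n \in \mathbb{N}$, so in particular for the four exponents in $S = \{l, l+1, k, k+1\}$.

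For the nontrivial direction (ii) $\Rightarrow$ (i), the plan is to reduce to the separable case and then apply the already-established result of Uchiyama \cite[Theorem 2.1.]{uch}. More precisely, Uchiyama's theorem is exactly the statement that on a \emph{separable} Hilbert space, condition (ii) forces quasinormality of $A$. So the single task is to remove the separability hypothesis, and Lemma \ref{yyy} is tailor-made for this: it allows us to upgrade any result of the form ``operators on separable Hilbert spaces satisfying \eqref{xukl} are quasinormal'' to the same conclusion on a Hilbert space of arbitrary dimension.

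Concretely, I would verify the hypothesis \eqref{quoo} of Lemma \ref{yyy} with $S = \{l, l+1, k, k+1\}$ by quoting Uchiyama's separable version, and then invoke the lemma to conclude that $A$ is quasinormal on $\mathcal{H}$ regardless of whether $\mathcal{H}$ is separable. There is no genuine obstacle here beyond making this citation and reduction explicit; the conceptual work has already been done in Lemma \ref{yyy}, whose proof uses the construction of the separable reducing subspace $\mathcal{M}_f$ generated by polynomials in $A$ and $A^*$ applied to a single vector $f$, together with the observation that quasinormality of each restriction $A|_{\mathcal{M}_f}$ at the level of the single relation $A^*AAf = AA^*Af$ suffices to deduce quasinormality of $A$ globally, since $f$ was arbitrary.
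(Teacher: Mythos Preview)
Your proposal is correct and follows essentially the same approach as the paper's own proof: quote Uchiyama's separable result \cite[Theorem 2.1.]{uch} to verify hypothesis \eqref{quoo} for $S=\{l,l+1,k,k+1\}$, then apply Lemma \ref{yyy} to drop separability. The paper's proof is terser (it omits the trivial direction (i)$\Rightarrow$(ii) and the recap of how Lemma \ref{yyy} works), but the substance is identical.
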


\begin{proof}
Set $l,k\in \natu$ such that $l<k$. By  \cite[Theorem 2.1.]{uch}, the set $S:=\{l,l+1,k,k+1\}$ has the property \eqref{quoo} for separable  Hilbert space. Applying Lemma \ref{yyy} 
completes the proof.
\end{proof}

The proof of the main result of this section (cf. Theorem \ref{gll}) involves several lemmas. The first of which collects some facts related to the block decomposition  with respect to kernel/range decomposition of the operators which satisfies 
 \eqref{xukl}.

\begin{theorem}\label{ppppt}
Let $A$ be a bounded operator on  $\hh$. Consider the block decomposition
\begin{equation}\label{dek}
A=\left[ \begin{array}{cc} A^\prime &  0 \\R &  0 \end{array} \right] \quad\text{on} \quad \overline{\ran A^*)}\oplus \nul A)=\mathcal{H},
\end{equation} where $A^\prime:=P A|_{\overline{\ran A^*)}}$, $R:=QA|_{\nul A)}$, $P:=P_{\overline{\ran A^*)}}$ and $Q:=I-P$.   Then the following assertions are valid:
\begin{itemize}
\item[(i)] if  operator $A$ satisfies the equation \eqref{xrow} for some $n\geq2$, then $A^\prime$ is injective,
\item[(ii)] if operator $A$ satisfies  \eqref{xukl} with $S=\{k,k+1\}$ and $A^\prime$ is bounded below, then the following equality hold
\begin{equation}\label{onsr}
\|V^*(\M^*\M+R^*R)^{k}V\|=\|(\M^*\M+R^*R)^{k}\|,
\end{equation}
where $A^\prime=V|\M|$ is the polar decomposition of $\M$. 
\end{itemize}

\end{theorem}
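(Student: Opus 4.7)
The plan is to reduce everything to a single operator identity on $\overline{\ran A^{*})}$ and then exploit it via the polar decomposition of $A'$. Setting $B:=A'^{*}A'+R^{*}R$, a direct induction on \eqref{dek} gives
\begin{equation*}
A^{n}=\begin{bmatrix} A'^{n} & 0 \\ R A'^{\,n-1} & 0 \end{bmatrix}, \qquad A^{*n}A^{n}=\begin{bmatrix} A'^{*(n-1)} B A'^{\,n-1} & 0 \\ 0 & 0 \end{bmatrix}, \qquad (A^{*}A)^{n}=\begin{bmatrix} B^{n} & 0 \\ 0 & 0 \end{bmatrix},
\end{equation*}
so the hypothesis \eqref{xrow} is equivalent to the single identity $A'^{*(n-1)} B A'^{\,n-1}=B^{n}$ on $\overline{\ran A^{*})}$; I will refer to this as the \emph{reduced equation at level $n$}.

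For part (i), I would fix any $n\geq 2$ for which the reduced equation holds and test it against a vector $f\in\overline{\ran A^{*})}$ with $A'f=0$. The left-hand side annihilates $f$, so $B^{n}f=0$; since $B\geq 0$, the kernels of $B$ and $B^{n}$ coincide, forcing $Bf=0$. This gives $\|A'f\|^{2}+\|Rf\|^{2}=0$, i.e.\ $Af=0$. But $f$ also lies in $\nul A)^{\perp}$, so $f=0$, proving that $A'$ is injective.

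For part (ii), I would take the reduced equation at level $k$, sandwich it between $A'^{*}$ on the left and $A'$ on the right, and compare with the reduced equation at level $k+1$; this yields
\begin{equation*}
A'^{*}B^{k}A'=B^{k+1}.
\end{equation*}
By (i) together with the hypothesis, $A'$ is injective and bounded below, so $|A'|$ is invertible on $\overline{\ran A^{*})}$ and the polar decomposition $A'=V|A'|$ has $V$ isometric. Substituting gives $|A'|\,V^{*}B^{k}V\,|A'|=B^{k+1}$; taking norms and using the standard estimate $\|T^{*}ST\|\leq\|T\|^{2}\|S\|$ for $S\geq 0$ produces $\|B\|^{k+1}\leq\||A'|\|^{2}\,\|V^{*}B^{k}V\|$, while $R^{*}R\geq 0$ gives $B\geq|A'|^{2}$ and hence $\||A'|\|^{2}\leq\|B\|$. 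Combining these yields $\|V^{*}B^{k}V\|\geq\|B\|^{k}=\|B^{k}\|$, and since the reverse inequality is immediate from $\|V\|\leq 1$, \eqref{onsr} follows.

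The main obstacle I expect is the clean derivation of the auxiliary identity $A'^{*}B^{k}A'=B^{k+1}$ from the two reduced equations together with the norm comparison $\|A'^{*}A'\|\leq\|B\|$; once these are in hand, the rest is a short sandwich-norm computation.
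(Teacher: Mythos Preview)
Your argument is correct in both parts, and for (ii) it is genuinely more economical than the paper's.

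For (i) you do exactly what the paper does, only phrased through the block formulas: the paper observes that $A'h=0$ forces $A^{2}h=0$, hence $(A^{*}A)^{n}h=A^{*n}A^{n}h=0$, so $h\in\nul A)\cap\overline{\ran A^{*})}=\{0\}$. Your version reads off the same conclusion from the reduced equation $A'^{*(n-1)}BA'^{\,n-1}=B^{n}$.

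For (ii) the two proofs diverge after the common identity $A'^{*}B^{k}A'=B^{k+1}$ (the paper's \eqref{ro}). The paper rewrites $\|V^{*}B^{k}V\|$ as a supremum of Rayleigh quotients, chooses an approximate eigenvector sequence for $B^{k}$ at $\lambda$ with $|\lambda|=\|B^{k}\|$, and uses the bounded-below hypothesis on $A'$ to control the denominators $\|A'f_{n}\|^{2}$ from below; this yields the lower bound after a limiting argument. Your route is shorter: substituting $A'=V|A'|$ into \eqref{ro} gives $|A'|\,V^{*}B^{k}V\,|A'|=B^{k+1}$, and submultiplicativity together with $\|A'\|^{2}=\|A'^{*}A'\|\le\|B\|$ immediately produces $\|B\|^{k+1}\le\|B\|\,\|V^{*}B^{k}V\|$. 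Notably, your norm estimate never actually uses the invertibility of $|A'|$ or the bounded-below hypothesis (only $\|V\|\le 1$ and $A'^{*}A'\le B$), so you in fact prove \eqref{onsr} under the weaker assumption that $A$ satisfies \eqref{xukl} with $S=\{k,k+1\}$. The paper's approximate-eigenvector approach, by contrast, genuinely needs $\|A'f\|\ge c\|f\|$ to keep the quotients bounded. The trade-off is that the paper's method generalizes to situations where one wants information about the \emph{restriction} of $B^{k}$ to $\overline{\ran V)}$ rather than just its norm, whereas your sandwich argument is tailored to the norm equality.

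Your worry about deriving $A'^{*}B^{k}A'=B^{k+1}$ is unfounded: sandwiching the level-$k$ reduced equation by $A'^{*}(\cdot)A'$ and comparing with the level-$(k+1)$ equation gives it in one line, exactly as you outlined.
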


\begin{proof}
(i) First, we choose $h\in \overline{\ran A^*)}$  such that $A^\prime h=0$, then
\begin{equation*}
\langle A^\prime h,f \rangle=0, \quad  f \in \overline{\ran A^*)}.
\end{equation*} 
Since $A^\prime=P A|_{\overline{\ran A^*)}}$, then the last line takes the form:
\begin{equation*}
\langle P Ah,f \rangle=0,  \quad f \in \overline{\ran A^*)},
\end{equation*} hence
\begin{equation*}
\langle Ah,A^*g\rangle=0, \quad  g \in \hh.
\end{equation*} We see
that the last condition is equivalent to $A^2h=0$. This and \eqref{xrow} (recall that $n\geq2$) give
\begin{equation*}
   (A^{*}A)^{n}h=A^{*n}A^{n}h=0,
\end{equation*}
which yields $h\in \nul(A^{*}A)^{n})=\nul |A|)=\nul A)$.
In turn, $h\in \overline{\ran A^*)}$, and so $h=0$. Hence Operator $ A^\prime$ is injective.

(ii) By \eqref{dek} we have
\begin{equation}\label{mod5}
A^*A=\left[ \begin{array}{cc} {A^\prime}^*A^\prime+R^*R &  0 \\0 &  0 \end{array} \right].
\end{equation}
 It is also easily seen that  \eqref{xukl} with $S=\{k,k+1\}$ implies  
\begin{equation*}
   (A^{*}A)^{k+1}=A^{*}(A^{*}A)^{k}A,
\end{equation*}
which  is equivalent to

\begin{align}\label{ro}
({A^\prime}^*A^\prime+R^*R)^{k+1}={A^\prime}^*({A^\prime}^*A^\prime+R^*R)^{k}A^\prime.
\end{align}
Note that
\begin{align}\label{to2}\|V^*(\M^*\M+R^*R)^{k}V\|&=\sup_{h\in \overline{\ran \M^*)},\|h\|=1} \langle V^*(\M^*\M+R^*R)^{k}Vh,h \rangle \\\notag&=\sup_{h\in \overline{\ran \M^*)},\|h\|=1} \langle (\M^*\M+R^*R)^{k}Vh,Vh \rangle
    \\\notag&=\sup_{g\in \overline{\ran V)},\|g\|=1} \langle (\M^*\M+R^*R)^{k}g,g \rangle
    \\\notag&=\sup_{f\in \overline{\ran A^*)}} \frac{\langle (\M^*\M+R^*R)^{k}\M f,\M f \rangle}{\|\M f\|^2}
    \\\notag&=\sup_{f\in \overline{\ran A^*)}} \frac{\langle \M^*(\M^*\M+R^*R)^{k}\M f,f \rangle}{\|\M f\|^2}
 \\\notag&=\sup_{f\in \overline{\ran A^*)}} \frac{\langle (\M^*\M+R^*R)^{k+1}f,f \rangle}{\|\M f\|^2}.
\end{align}
It follows from the selfadjointness of $(\M^*\M+R^*R)^{k}$ that  there exists $\lambda \in \mathbb{C}$ and sequence  $\{f_n\}_{n\in \natu}$ on $\hh$ such that the following conditions are satisfied:
    \begin{gather}
\label{dz1}
\|f_n\|=1, \quad n\in \natu,
    \\ \label{dz2}
|\lambda|=\|(\M^*\M+R^*R)^{k}\|,
    \\ \label{dz3}
\lim_{n\rightarrow \infty}\|(\M^*\M+R^*R)^{k}f_n-\lambda f_n\|=0.    
    \end{gather}
Since $\M$ is bounded below, there exists  $c>0$ such that  $\|\M f\|>c\|f\|$ for $f\in \overline{\ran A^*)}$.
Hence, we have
\begin{align*}
&\Big|\frac{\langle (\M^*\M+R^*R)^{k+1}f_n,f_n \rangle}{\|\M f_n\|^2}-\frac{\langle (\M^*\M+R^*R)\lambda f_n,f_n \rangle}{\|\M f_n\|^2}\Big|
\\&\leq\frac{1}{\|\M f_n\|^2}|\langle (\M^*\M+R^*R)((\M^*\M+R^*R)^{k}-\lambda) f_n,f_n \rangle|    
\\&\hspace{-1ex}\overset{\eqref{dz1}}\leq\frac{1}{c^2}\|\langle (\M^*\M+R^*R)((\M^*\M+R^*R)^{k}-\lambda) f_n\|
\\&\leq\frac{1}{c^2}\|\langle (\M^*\M+R^*R)\|\|((\M^*\M+R^*R)^{k}-\lambda)f_n\|.
%\\&\leq\frac{\| (M^*M+R^*R)((M^*M+R^*R)^{n}-\lambda) x_n\|}{\||M|x_n\|^2}    \notag
%\\\leq\frac{1}{c\|f_n\|^2}
%\\&\Le\frac{\| (M^*M+R^*R)\|}{c^2} \|((M^*M+R^*R)^{n}-\lambda) x_n\|   \notag
\end{align*}
Letting $n$ to infinity in the above inequality and using \eqref{dz3}, we get 
\begin{equation} \label{to}
\frac{\langle (\M^*\M+R^*R)^{k+1}f_n,f_n \rangle}{\|\M f_n\|^2}-\frac{\langle (\M^*\M+R^*R)\lambda f_n,f_n \rangle}{\|\M f_n\|^2}\rightarrow 0.
\end{equation}
By inequality
\begin{equation*}
\frac{\langle (\M^*\M+R^*R) f_n,f_n \rangle}{\|\M f_n\|^2}\geq\frac{\langle (\M^*\M) f_n,f_n \rangle}{\langle (\M^*\M) f_n,f_n \rangle}=1
\end{equation*}
and \eqref{to}, we have 
\begin{equation*}
\sup_{f\in \overline{\ran A^*)}}\frac{\langle (\M^*\M+R^*R)^{k+1}f,f \rangle}{\|\M f\|^2}\geq |\lambda|\overset{\eqref{dz2}}=\|(\M^*\M+R^*R)^{k}\|.
\end{equation*} The last inequality with  \eqref{to2} gives
\begin{equation*}
\|V^*(M^*M+R^*R)^{k}V\|\geq\|(M^*M+R^*R)^{k}\|.
\end{equation*}
In turn
\begin{align*}
\|V^*(\M^*\M+R^*R)^{k}V\|&\leq\|V^*\|\|(\M^*\M+R^*R)^{k}\|\|V\|
\leq \|(\M^*\M+R^*R)^{k}\|.
\end{align*} Hence the equality in \eqref{onsr} holds.
\end{proof}

The following  Lemma  turns out to be useful. It will be used several times in this paper.

\begin{lemma}\label{pomoc}
 Let $k \in \mathbb{N}$ and $A$ be a bounded operator in $\hh$ such that  $A^*A$ commutes with   $A^k$ and satisfy the equation \eqref{xrow} with $n=k$.
Then $A$ is quasinormal.
\end{lemma}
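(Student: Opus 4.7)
The plan is to reduce Lemma \ref{pomoc} to the already-established Theorem \ref{ucc} by showing that the two hypotheses force $A$ to satisfy \eqref{xukl} on the enlarged set $S=\{k,k+1,2k,2k+1\}$, whose two consecutive pairs $\{k,k+1\}$ and $\{2k,2k+1\}$ are exactly what Theorem \ref{ucc} consumes (once we check $k+1<2k$, which holds for $k\geq 2$). The boundary case $k=1$ requires no argument, since the hypothesis that $A^{*}A$ commutes with $A$ is by definition quasinormality.

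First I would observe that, under the two hypotheses, the operator $A^{k}$ is itself quasinormal. Writing $P:=A^{*}A$, the equation $A^{*k}A^{k}=(A^{*}A)^{k}$ gives $|A^{k}|^{2}=P^{k}$, while the commutation hypothesis $PA^{k}=A^{k}P$ combined with Theorem \ref{rsn} shows that every positive power of $P$, and in particular $P^{k}=|A^{k}|^{2}$, commutes with $A^{k}$. Applying Theorem \ref{rsn} once more we see that $|A^{k}|$ commutes with $A^{k}$, which is the very definition of quasinormality of $A^{k}$. Embry's characterisation \eqref{wst} applied to the bounded quasinormal operator $A^{k}$ then yields
\begin{equation*}
A^{*nk}A^{nk}=(A^{k})^{*n}(A^{k})^{n}=\bigl((A^{k})^{*}A^{k}\bigr)^{n}=(P^{k})^{n}=P^{nk},\qquad n\in\natu,
\end{equation*}
so in particular the exponents $k$ and $2k$ land in our target set.

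To produce the exponents $k+1$ and $2k+1$, I would use the iterated commutation $PA^{nk}=A^{nk}P$ (a consequence of $PA^{k}=A^{k}P$) together with its adjoint $A^{*nk}P=PA^{*nk}$ and the identity just displayed, in the one-line computation
\begin{equation*}
A^{*(nk+1)}A^{nk+1}=A^{*nk}\,P\,A^{nk}=P\,A^{*nk}A^{nk}=P\cdot P^{nk}=P^{nk+1}.
\end{equation*}
Specialising $n=1$ and $n=2$ finishes the verification that $A$ satisfies \eqref{xukl} on $S=\{k,k+1,2k,2k+1\}$, whence Theorem \ref{ucc} (applied with its parameters $(l,k)$ taken to be $(k,2k)$, which for $k\geq 2$ satisfies $l<k$ and produces four distinct exponents) yields quasinormality of $A$.

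I do not expect any serious obstacle. The only substantive idea is the observation that the hypotheses conspire to make $A^{k}$ quasinormal; Embry's theorem then supplies an arithmetic progression of valid exponents for \eqref{xrow}, and the commutation hypothesis upgrades each to a consecutive pair, placing us squarely in the regime of Theorem \ref{ucc}.
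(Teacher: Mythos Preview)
Your proof is correct and follows essentially the same strategy as the paper: both handle $k=1$ trivially, verify for $k\geq 2$ that $A$ satisfies \eqref{xukl} on $S=\{k,k+1,2k,2k+1\}$, and then invoke Theorem \ref{ucc}. The only difference is that the paper obtains the four equations by direct one-line computations using the commutation $A^{*}A\,A^{k}=A^{k}A^{*}A$ (e.g.\ $A^{*2k}A^{2k}=A^{*k}(A^{*}A)^{k}A^{k}=A^{*k}A^{k}(A^{*}A)^{k}=(A^{*}A)^{2k}$), whereas you take a slight detour through the quasinormality of $A^{k}$ and Embry's characterisation; your route is a bit heavier but yields the same conclusion.
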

\begin{proof}
The case of $k=1$ is obvious. Assume now that $k>1$. First we show that $A$ satisfy  \eqref{xukl} for $S=\{k,k+1,2k,2k+1\}$. Indeed, this is because
\begin{equation*}
    A^{*k+1}A^{k+1}=A^{*k}A^*AA^{k}=A^{*k}A^{k}A^*A=(A^*A)^{k+1}.
\end{equation*}
Similarly, we see that 
\begin{equation*}
    A^{*2k}A^{2k}=A^{*k}(A^*A)^kA^{k}=A^{*k}A^{k}(A^*A)^k=(A^*A)^{2k},
\end{equation*}
and
\begin{equation*}
    A^{*2k+1}A^{2k+1}=A^{*2k}A^*AA^{2k}=A^{*2k}A^{2k}A^*A=(A^*A)^{2k+1}.
\end{equation*}
Summarizing, we have shown that the operator $A$ satisfy  \eqref{xukl} with $S=\{k,k+1,2k,2k+1\}$. This and Theorem \ref{ucc}  imply that $A$
is quasinormal. 
\end{proof}

 For the reader's convenience, we include the proof of the following result which
is surely folklore.

\begin{lemma}\label{fug}
Let $M,N,T \in \textbf{B}(\mathcal{H})$ be such that $M$ and $N$ are positive  and
\begin{equation}\label{eq1}
TM^k=N^kT.
\end{equation}
Then $TM=NT$.
\end{lemma}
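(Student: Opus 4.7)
My plan is to reduce the intertwining relation \eqref{eq1} to an ordinary commutation relation on a doubled Hilbert space, so that Theorem \ref{rsn} can be applied directly. To that end, on $\hh \oplus \hh$ I would form the $2 \times 2$ block operators
\begin{equation*}
\widetilde{A} = \begin{pmatrix} N & 0 \\ 0 & M \end{pmatrix}, \qquad \widetilde{T} = \begin{pmatrix} 0 & T \\ 0 & 0 \end{pmatrix}.
\end{equation*}
Since $M$ and $N$ are bounded and positive, $\widetilde{A}$ is a bounded positive operator on $\hh \oplus \hh$, and a direct block multiplication shows that $\widetilde{T}\widetilde{A}^k = \widetilde{A}^k \widetilde{T}$ if and only if $TM^k = N^k T$. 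The hypothesis therefore places $\widetilde{T}$ in the commutant of the positive operator $\widetilde{A}^k$.

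By uniqueness of the positive $k$-th root of a positive operator, $\widetilde{A} = (\widetilde{A}^k)^{1/k}$. Theorem \ref{rsn} then yields the equality of commutants $\{\widetilde{A}^k\}' = \{\widetilde{A}\}'$, so in particular $\widetilde{T}\widetilde{A} = \widetilde{A}\widetilde{T}$. Reading off the $(1,2)$-block of this new identity gives exactly $TM = NT$, which is what we wanted.

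I do not expect any real obstacle in this approach; the only moment of choice is hitting on the block-matrix encoding that turns an intertwining relation into a commutation relation, after which the heavy lifting is done by Theorem \ref{rsn}. As a backup, one could argue more directly by iterating $TM^k = N^k T$ to obtain $Tp(M^k) = p(N^k)T$ for every polynomial $p$, approximating the continuous function $x \mapsto x^{1/k}$ uniformly by polynomials on a compact interval containing $\sigma(M^k)\cup \sigma(N^k)$, and passing to the norm limit to get $TM = NT$; but the matrix trick is cleaner and plugs neatly into the framework already set up in the paper.
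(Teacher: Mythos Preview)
Your proof is correct and is essentially the same as the paper's: both use the Berberian $2\times 2$ matrix trick to convert the intertwining relation into a genuine commutation with a positive block-diagonal operator, then invoke Theorem~\ref{rsn}. The only difference is cosmetic---the paper places $T$ in the $(2,1)$ corner with $\mathrm{diag}(M,N)$, while you place it in the $(1,2)$ corner with $\mathrm{diag}(N,M)$---and your alternative polynomial-approximation argument is a valid backup but not needed here.
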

\begin{proof}
An application of Berberian's trick concerning $2\times2$ operator matrices gives that the following equation is equivalent to 
 \eqref{eq1}
\begin{equation*}
\left[ \begin{array}{cc} 0 &  0 \\T &  0 \end{array} \right] \left[ \begin{array}{cc} M & 0  \\0 &   N \end{array} \right]^k=\left[ \begin{array}{cc} M & 0  \\0 &   N \end{array} \right]^k \left[ \begin{array}{cc} 0 & 0 \\T &  0 \end{array} \right].
\end{equation*}
Using  Theorem \ref{rsn}, we get
\begin{equation*}
\left[ \begin{array}{cc} 0 &  0 \\T &  0 \end{array} \right] \left[ \begin{array}{cc} M & 0  \\0 &   N \end{array} \right]=\left[ \begin{array}{cc} M & 0  \\0 &   N \end{array} \right] \left[ \begin{array}{cc} 0 & 0 \\T &  0 \end{array} \right].
\end{equation*}
Looking more closely at the last line, we get  $TM=NT$ which completes the proof.
\end{proof}

We need one more fact in the proof of the main result of this section (cf. Theorem \ref{gll}), which seems to be of
independent interest.
\begin{lemma}\label{wak}
Let $\alpha, \beta\in\real_+$ be such that $\alpha<\beta$ and $A$, $B$ be bounded  operators on $\hil$ such that $B$ is positive and injective. Then the following conditions are equivalent:
\begin{itemize}
\item[(i)] $A^*A\leq B$ and $A^*B^sA=B^{s+1}$ for $s=\alpha,\beta$,
\item[(ii)] $A$ and $B$ commute and $A^*A= B$.
\end{itemize}
\end{lemma}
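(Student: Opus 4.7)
The direction (ii) $\Rightarrow$ (i) is immediate: $AB=BA$ and $A^*A=B$ give (upon taking adjoints) $A^*B=BA^*$, so $A^*B^s A = B^s A^*A = B^{s+1}$ for every $s > 0$, while $A^*A = B \leq B$.

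For (i) $\Rightarrow$ (ii), I would first show that the inequality $A^*A \leq B$ lets us factor $A$ through $B^{1/2}$. Since $B$ is positive and injective, $\mathcal{R}(B^{1/2})$ is dense in $\hil$, and the identity $\|A B^{-1/2}(B^{1/2}g)\|^2 = \langle A^*Ag,g\rangle \leq \langle Bg,g\rangle = \|B^{1/2}g\|^2$ shows that $AB^{-1/2}$ extends to a contraction $V \in \textbf{B}(\hil)$ with $A = V B^{1/2}$. The hypotheses $A^*B^sA = B^{s+1}$ for $s \in \{\alpha,\beta\}$ then translate, by density and the identity $A = V B^{1/2}$, into $V^*B^\alpha V = B^\alpha$ and $V^*B^\beta V = B^\beta$.

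The next step is to invoke Davis-Choi-Jensen. I would form the isometry
\[\tilde V = \begin{pmatrix} V \\ (I-V^*V)^{1/2} \end{pmatrix} : \hil \to \hil \oplus \hil\]
and the unital positive linear map $\Phi: M_2(\textbf{B}(\hil)) \to \textbf{B}(\hil)$ given by $\Phi(X) = \tilde V^* X \tilde V$. With $\tilde B = \begin{pmatrix} B & 0 \\ 0 & 0 \end{pmatrix}$ one has $\Phi(\tilde B^s) = V^* B^s V$ for every $s>0$. Applying Theorem \ref{petz} to $a = \tilde B^\beta$ and the nonaffine operator concave function $f(t) = t^{\alpha/\beta}$, both sides of Jensen's inequality collapse to $B^\alpha$; the equality case of Petz's theorem then identifies $\Phi$ as a $*$-homomorphism on the commutative $C^*$-subalgebra generated by $\tilde B^\beta$, which via continuous functional calculus upgrades to $V^*B^sV = B^s$ for every $s>0$.

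Specialising to $s=1,2$ gives $V^*BV = B$ and $V^*B^2V = B^2$. The Schwarz identity
\[V^*B^2V - (V^*BV)^2 = (BV)^*(I - VV^*)(BV) \geq 0\]
becomes an equality, so $(I - VV^*)^{1/2} BV = 0$, forcing $BV = VV^*BV = VB$. Combined with $V^*BV = B$ this yields $V^*V B = B$, and since $\mathcal{R}(B)$ is dense we conclude $V^*V = I$. Hence $V$ is an isometry commuting with $B$, and $A = V B^{1/2}$ immediately satisfies $A^*A = B$ and $AB = BA$. The main obstacle is the equality case of Davis-Choi-Jensen applied to the operator concave $t^{\alpha/\beta}$ on $[0,\infty)$ rather than on an open interval as stated in Theorem \ref{petz}; this is handled by applying Petz's result to $-f$ after affinely rescaling the spectrum of $\tilde B^\beta$ into $(-1,1)$ in accordance with the Bendat-Sherman representation \eqref{sher}.
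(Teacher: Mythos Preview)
Your proof is correct and follows the same overall strategy as the paper: factor $A=QB^{1/2}$ via Douglas, translate the hypotheses into $Q^*B^sQ=B^s$ for $s=\alpha,\beta$, and then exploit the equality case of the Davis--Choi--Jensen inequality (Theorem~\ref{petz}) for the operator concave power $t^{\alpha/\beta}$.

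The implementation differs in two places. First, for the unital positive map you use a single isometric dilation $\tilde V=\bigl(\begin{smallmatrix}V\\(I-V^*V)^{1/2}\end{smallmatrix}\bigr)$, whereas the paper uses the Hansen--Pedersen pair of unitaries \eqref{hanpet} and averages; both serve the same purpose of producing a unital completely positive map whose compression recovers $X\mapsto Q^*XQ$. Second, and more substantively, the endgames diverge. After multiplicativity, the paper only extracts $Q^*B^{k\beta}Q=B^{k\beta}$ for $k\in\natu$, builds the auxiliary operator $C=B^{\beta/2}Q$, verifies $C^{*2}C^2=(C^*C)^2$ and $C^{*3}C^3=(C^*C)^3$, and then invokes Theorem~\ref{ucc} to conclude $C$ is quasinormal, from which $BQ=QB$ follows. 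You instead pass (via functional calculus on the commutative $C^*$-algebra generated by $\tilde B^\beta$) to $V^*BV=B$ and $V^*B^2V=B^2$, and the Schwarz-type identity $V^*B^2V-(V^*BV)^2=(BV)^*(I-VV^*)(BV)$ directly yields $BV=VB$ and $V^*V=I$. Your route is more self-contained---it avoids the appeal to Theorem~\ref{ucc}---while the paper's route keeps the argument inside the quasinormality framework that the rest of the paper develops.
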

\begin{proof}

(ii)$\Rightarrow$ (i) This part is obvious.

(i)$\Rightarrow$ (ii) We conclude from the Douglas factorization lemma \cite[Theorem 1]{doug}
that there exist operator $Q$ such that $\|Q\|\leq 1$ and $A=QB^\frac{1}{2}$. This, injectivity of $B$ and condition (i) gives that
\begin{equation}\label{vvv}
Q^*B^sQ=B^{s}\quad \text{for}\quad s=\alpha,\beta.
\end{equation}
Consider the operators on $\hil\oplus\hil$ given by
\begin{align}Z=\left[ \begin{array}{cc} B^{\beta} &  0 \\0 &  0 \end{array}\label{hanpet} \right],\quad U=\left[ \begin{array}{cc} Q &  R \\S &  -Q^* \end{array} \right],\quad V=\left[ \begin{array}{cc} Q &  -R \\S &  Q^* \end{array} \right],
\end{align}
where $R=(I-QQ^*)^\frac{1}{2}$ and $S=(I-Q^*Q)^\frac{1}{2}$, and the maps
\begin{equation*}
    \varPhi:B(\hil\oplus\hil)\rightarrow B(\hil\oplus\hil) \quad\text{and}\quad \varPsi:B(\hil\oplus\hil)\rightarrow B(\hil)
\end{equation*} defined by
\begin{equation*}
    \varPhi(X)=\frac{1}{2}(U^*XU+V^*XV) \quad\text{and}\quad \varPsi(X)=P_{\hil\oplus\{0\}}\varPhi(X)|_{\hil\oplus\{0\}}.
\end{equation*}
  Using Lemma \ref{fug} with $n=2$ we verify that operators $U$ and $V$ are unitaries. Hence both $\varPhi$ and $\varPsi$ are unital positive linear maps. Let  $f:(0,\infty)\rightarrow\real$ be function given by $f(x)=x^\frac{\alpha}{\beta}$. We therefore have 
\begin{align*}
\varPsi(f(Z))&=P_{\hil\oplus\{0\}}\varPhi(f(Z))|_{\hil\oplus\{0\}}=P_{\hil\oplus\{0\}}\varPhi\big(\big[
\begin{smallmatrix}B^{\alpha} &  0 \\0 &  0 \end{smallmatrix}\big]\big)|_{\hil\oplus\{0\}}\\&=P_{\hil\oplus\{0\}}\Big[ \begin{smallmatrix} Q^*B^{\alpha}Q &  0 \\0 &  RB^{\alpha}R \end{smallmatrix}\Big]|_{\hil\oplus\{0\}}
=Q^*B^{\alpha}Q\overset{\eqref{vvv}}=(Q^*B^{\beta}Q)^\frac{\alpha}{\beta}\\&=f(Q^*B^{\beta}Q)=f\Big(P_{\hil\oplus\{0\}}\left[ \begin{smallmatrix}Q^*B^{\beta}Q &  0 \\0 &  RB^{\beta}R \end{smallmatrix}\right]|_{\hil\oplus\{0\}}\Big)\\&=f\big(P_{\hil\oplus\{0\}}\varPhi\big(\big[ \begin{smallmatrix} B^{\beta} &  0 \\0 &  0 \end{smallmatrix}\big]\big)|_{\hil\oplus\{0\}}\big)=f(\varPsi(Z)).
\end{align*}
Since $-f$ is operator convex and $\varPsi (-f(Z))=-f(\varPsi(Z))$ application of Theorem \ref{petz} shows that map $\varPsi$ restricted to the subalgebra generated by $\{Z\}$ is multipticative. In particular 
\begin{equation*}
    \varPsi(Z^k)=(\varPsi(Z))^k \quad \text{for every}\quad k\in \natu.
\end{equation*}
The last equality is equivlent to the following one
\begin{equation*}
    Q^*B^{k\beta}Q=(Q^*B^{\beta}Q)^k \quad \text{for every}\quad k\in \natu.
\end{equation*}
This and \eqref{vvv} gives
\begin{equation}\label{vvv2}
    Q^*B^{k\beta}Q=B^{k\beta} \quad \text{for every}\quad k\in \natu.
\end{equation}
Put $C:=B^\frac{\beta}{2}Q$. We deduce from \eqref{vvv2} that
\begin{align*}
    C^*C&=Q^*B^{\beta}Q=B^{\beta},
    \\C^{*2}C^2&=Q^*B^\frac{\beta}{2}C^*CB^\frac{\beta}{2}Q=Q^*B^{2\beta}Q=B^{2\beta}=(C^*C)^2,\\C^{*3}C^3&=C^{*}(C^{*2}C^2)C=Q^*B^\frac{\beta}{2}B^{2\beta}B^\frac{\beta}{2}Q\\&=Q^*B^{3\beta}Q=B^{3\beta}=(C^*C)^3.
\end{align*}
Since $C^{*2}C^2=(C^*C)^2$ and $C^{*3}C^3=(C^*C)^3$, we get that $C$ is quasinormal (cf. Theorem \ref{ucc}). Hence
\begin{equation*}
(C^*C)C=C(C^*C),
\end{equation*} which implies
\begin{equation*}
B^\beta B^\frac{\beta}{2}Q=B^\frac{\beta}{2}QB^\beta.
\end{equation*}
Since $B^\frac{\beta}{2}$ is injective we get
\begin{equation*}
    B^\beta Q=QB^\beta.
\end{equation*}
 We infer from Theorem \ref{rsn}
 that
 \begin{equation*}
    B Q=QB.
\end{equation*}
Multiplying the above equation right by $B^\frac{1}{2}$ and using $A=QB^\frac{1}{2}$, we see that $A$ and $B$ commute. Hence, we have
\begin{equation*}
    B^\beta A^*A= A^*B^\beta A=B^{\beta+1}.
\end{equation*}
This and injectivity of $B^{\beta}$ lead to 
\begin{equation*}
     A^*A=B,
\end{equation*}
which completes the proof.
\end{proof}
\begin{rem}
The operators defined in \eqref{hanpet}
originally appeared in \cite{hans3} on the ocassion of proving that if $f$ is operator convex function, then the inequality $f(A^*XA)\leq A^*f(X)A$ holds for any contractive operator $A$ and for any selfadjoint operator $X$.
\end{rem}

We are now in a position to formulate and prove the aforementioned analog
of Theorem \ref{ucc}.

\begin{theorem}\label{gll}
Let $m,n,p\in \mathbb{N}$ be such that $m< n$ and $A$ be a bounded operator in $\hil$. Then the following conditions are equivalent:
\begin{itemize}
    \item[(i)] operator $A$ satisfies \eqref{xukl} with $S=\{p,m,m+p,n,n+p\}$,
    \item[(ii)] operator $A$ is quasinormal.
\end{itemize}  
\end{theorem}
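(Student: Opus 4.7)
The implication (ii)$\Rightarrow$(i) is Embry's characterization~\eqref{wst}. For (i)$\Rightarrow$(ii) the strategy is to exhibit the commutation $A^{p}(A^{*}A)=(A^{*}A)A^{p}$; combined with the hypothesis $A^{*p}A^{p}=(A^{*}A)^{p}$ from $s=p\in S$, Lemma~\ref{pomoc} applied with $k=p$ then yields quasinormality of $A$.

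\textbf{Extracting three block relations.} From $A^{*(m+p)}A^{m+p}=A^{*p}(A^{*m}A^{m})A^{p}$ together with the hypotheses $A^{*m}A^{m}=(A^{*}A)^{m}$ and $A^{*(m+p)}A^{m+p}=(A^{*}A)^{m+p}$ one obtains $A^{*p}(A^{*}A)^{m}A^{p}=(A^{*}A)^{m+p}$, and the analogous computation with $n$ gives $A^{*p}(A^{*}A)^{n}A^{p}=(A^{*}A)^{n+p}$. Now pass to the block decomposition~\eqref{dek} of Theorem~\ref{ppppt}. Since $n\geq 2$, Theorem~\ref{ppppt}(i) ensures that $\M$ is injective, so $B:=\M^{*}\M+R^{*}R$ is positive and injective on $\overline{\ran A^{*})}$. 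Writing the three identities above in block form (noting that $(A^{*}A)^{s}$ has the block form $\mathrm{diag}(B^{s},0)$) produces, on $\overline{\ran A^{*})}$,
\begin{equation*}
\M^{*p}B^{m}\M^{p}=B^{m+p},\qquad \M^{*p}B^{n}\M^{p}=B^{n+p},\qquad \M^{*p}\M^{p}+\M^{*(p-1)}R^{*}R\M^{p-1}=B^{p}.
\end{equation*}
The third equality in particular entails the operator inequality $\M^{*p}\M^{p}\leq B^{p}$.

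\textbf{Applying Lemma~\ref{wak} and concluding.} Put $C:=\M^{p}$ and $\widetilde B:=B^{p}$; then $\widetilde B$ is positive and injective, $C^{*}C\leq \widetilde B$, and the first two identities take the form $C^{*}\widetilde B^{m/p}C=\widetilde B^{m/p+1}$ and $C^{*}\widetilde B^{n/p}C=\widetilde B^{n/p+1}$ with $0<m/p<n/p$. Lemma~\ref{wak}, applied with $\alpha=m/p$ and $\beta=n/p$, then yields $C\widetilde B=\widetilde B C$ and $C^{*}C=\widetilde B$. Theorem~\ref{rsn} upgrades the commutation to $\M^{p}B=B\M^{p}$, while $\M^{*p}\M^{p}=B^{p}$ combined with the third relation above forces $\M^{*(p-1)}R^{*}R\M^{p-1}=0$, hence $R\M^{p-1}=0$. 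A direct block computation from these two facts gives $A^{p}(A^{*}A)=(A^{*}A)A^{p}$, whereupon Lemma~\ref{pomoc} closes the proof.

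\textbf{The main obstacle} is spotting the correct reformulation that unlocks Lemma~\ref{wak}: one must apply it with the rescaled base $\widetilde B=B^{p}$ and the rational exponents $\alpha=m/p<\beta=n/p$ rather than with $B$ directly. Once this reformulation is in place, the operator inequality $C^{*}C\leq \widetilde B$ needed by Lemma~\ref{wak} drops out of the $s=p$ hypothesis essentially for free, because the unwanted cross term $\M^{*(p-1)}R^{*}R\M^{p-1}$ is manifestly nonnegative.
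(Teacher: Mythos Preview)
Your proof is correct and follows essentially the same route as the paper's: both pass to the block decomposition of Theorem~\ref{ppppt}, use Theorem~\ref{ppppt}(i) to secure injectivity, apply Lemma~\ref{wak} to the pair $(\M^{p},B^{p})$ with exponents $\alpha=m/p<\beta=n/p$, deduce $R\M^{p-1}=0$ and the commutation $\M^{p}B=B\M^{p}$ via Theorem~\ref{rsn}, and finish with Lemma~\ref{pomoc}. The only cosmetic difference is that the paper records the inequality $\M^{*p}\M^{p}\le B^{p}$ in the form $\M^{*p}\M^{p}\le \M^{*(p-1)}B\M^{p-1}=B^{p}$, which is equivalent to your third block identity.
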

\begin{proof}
Consider the block decomposition as in Theorem \ref{ppppt}. Hence, by Theorem \ref{ppppt} (i), $\M$ is injective. An induction argument shows that 
\begin{align}\label{pot5}
A^k=\left[ \begin{array}{cc} A^{\prime k} &  0 \\RA^{\prime k-1} &  0 \end{array} \right], \quad k\in \natu.
\end{align}
It follows from  \eqref{xukl} with $S=\{m,m+p,n,n+p\}$ that
\begin{equation*}
    A^{*p}(A^*A)^sA^p=(A^*A)^{p+s}\quad \text{for}\quad s=m,n.
\end{equation*}
This, together with \eqref{mod5} and \eqref{pot5}, implies that 
\begin{equation}\label{h1}
    \M^{*p}(\M^*\M+R^*R)^s\M^p=(\M^*\M+R^*R)^{p+s}\quad \text{for}\quad s=m,n.
\end{equation}
We infer from \eqref{xrow} with $n=p$, \eqref{pot5} and \eqref{mod5} that
\begin{equation}\label{h2}
    \M^{*p}\M^p\leq \M^{*p-1}(\M^{*}\M+R^*R)\M^{p-1}=(\M^*\M+R^*R)^{p}.
\end{equation}
By \eqref{h1} and \eqref{h2} and Lemma \ref{wak} applied to the pair  $({\M}^p,({\M}^*{\M}+R^*R)^{p})$ in place of $(A,B)$ and with $\alpha=\frac{m}{p}$ and $\beta=\frac{n}{p}$, we deduce that 
\begin{align}\label{h3}
    \M^{*p}\M^p&=(\M^*\M+R^*R)^{p}
    \\ \label{h4}
 \M^p (\M^*\M+R^*R)^{p}&=  (\M^*\M+R^*R)^{p}\M^p.
\end{align}
 Combining \eqref{h2} with \eqref{h3}, we get
\begin{equation*}
    \M^{*p-1}R^*R\M^{p-1}=0,
\end{equation*}
which yields $R\M^{p-1}=0$.
This together with  \eqref{pot5} implies that
\begin{equation}\label{dekze}
A^p=\left[ \begin{array}{cc} A^{\prime p} &  0 \\0 &  0 \end{array} \right]. 
\end{equation}
We deduce from  \eqref{h4}, \eqref{dekze},  \eqref{mod5} and Lemma \ref{rsn} that operator $A^p$ commute with $A^*A$. Applying Lemma \ref{pomoc}
completes the proof.
\end{proof}
 The author proved that in the class of bounded injective bilateral weighted shifts, the single equation  \eqref{xrow}
with $n \geq 2$ does imply quasinormality (cf. \cite[Theorem
3.3]{ja}). As shown in   \cite[Example 3.4]{ja}, this no longer true for unbounded injective bilateral weighted shifts. Now we  
will show that the condition \eqref{xukl} with $S=\{m,n\}$, where  $n>m\geq2$, does imply quasinormality in the class of unbounded injective bilateral weighted shifts.
\begin{theorem}
Let $m,n\geq2$ be such that $m< n$. Then any injective  bilateral weighted shift
$A$ that satisfies \eqref{xukl} with $S=\{m,n\}$
is quasinormal.
\end{theorem}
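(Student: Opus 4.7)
The plan is to reduce the operator identities to a scalar recurrence on the squared moduli of the weights, linearize via logarithms, and finish by showing that two auxiliary polynomials associated to $m$ and $n$ are coprime in $\mathbb{Q}[x]$.

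Fix the standard orthonormal basis $\{e_k\}_{k\in\mathbb{Z}}$ with $A e_k = \lambda_k e_{k+1}$; injectivity forces $\lambda_k \ne 0$. Direct computation gives $A^{*s} A^s e_k = \prod_{j=0}^{s-1} |\lambda_{k+j}|^2 \, e_k$ and $(A^*A)^s e_k = |\lambda_k|^{2s} e_k$, so the hypothesis for $s \in \{m,n\}$ is equivalent to $\prod_{j=0}^{s-1} a_{k+j} = a_k^s$ for all $k \in \mathbb{Z}$, where $a_k := |\lambda_k|^2 > 0$. A bilateral weighted shift is quasinormal precisely when $(a_k)$ is constant, so it suffices to prove constancy. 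Setting $b_k := \log a_k$ turns the hypothesis into $\sum_{j=0}^{s-1} b_{k+j} = s b_k$ for $s \in \{m,n\}$ and all $k$. Writing $S$ for the shift on sequences and $Q_s(x) := 1 + x + \cdots + x^{s-1}$, this reads $(Q_s(S) - sI) b = 0$. Since $Q_s(x) - s = (x-1) R_s(x)$ with $R_s(x) := (s-1) + (s-2) x + \cdots + x^{s-2}$ a polynomial of degree $s-2$ satisfying $R_s(1) = \binom{s}{2} \ne 0$, one gets $(S - I) R_s(S) b = 0$. Thus $R_s(S) b$ is $S$-invariant and hence a constant sequence $c_s \in \mathbb{R}$; in summary, $R_m(S) b = c_m$ and $R_n(S) b = c_n$.

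The key technical step is to prove that $R_m$ and $R_n$ are coprime in $\mathbb{Q}[x]$. In the shifted variable $\mu := x - 1$, one has $\tilde R_s(\mu) := R_s(1 + \mu) = \binom{s}{2} + \binom{s}{3} \mu + \cdots + \mu^{s-2}$, and a short Pascal-triangle calculation gives the clean recurrence $\tilde R_{s+1}(\mu) = (1 + \mu)\tilde R_s(\mu) + s$. This already handles $n = m+1$: any common divisor must divide the nonzero rational $m$, hence is a unit. For $n - m \geq 2$, iterating the recurrence yields $\tilde R_n = (1+\mu)^{n-m} \tilde R_m + P_{n-m}(\mu)$, where $P_k(\mu) = \sum_{i=0}^{k-1} (m + k - 1 - i)(1+\mu)^i$ has degree $k-1$ with strictly positive coefficients. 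Hence $\gcd(\tilde R_m, \tilde R_n) = \gcd(\tilde R_m, P_{n-m})$, and the main obstacle is to verify that $\tilde R_m$ and $P_{n-m}$ share no common complex root. My approach is to combine two facts: first, every root $\lambda \ne 1$ of $R_m$ must satisfy $|\lambda| > 1$ — a consequence of the triangle inequality applied to $|Q_m(\lambda)| = m$ together with $\sum_{j=0}^{m-1} |\lambda|^j \leq m$ when $|\lambda| \leq 1$, where equality forces $\lambda = 1$; second, since $P_k$ has positive coefficients it has no nonnegative real root, and a direct substitution of the relation $\lambda^m = m\lambda - (m-1)$ into $P_{n-m}(\lambda - 1)$ reduces the problem to a lower-complexity polynomial identity whose roots can be excluded by sharper magnitude estimates.

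Once the coprimality is in hand, Bezout's identity supplies polynomials $u, v \in \mathbb{Q}[x]$ with $u R_m + v R_n = 1$. Applying $u(S) R_m(S) + v(S) R_n(S) = I$ to the sequence $b$ yields $b = u(S) c_m + v(S) c_n$. Because $c_m$ and $c_n$ are constant sequences and the shift $S$ fixes constants, $u(S) c_m = u(1) c_m$ and $v(S) c_n = v(1) c_n$, so $b$ is itself a constant sequence. Exponentiating, $a_k = |\lambda_k|^2$ is independent of $k$, which for a bilateral weighted shift is exactly the condition of quasinormality.
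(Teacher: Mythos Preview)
Your overall strategy coincides with the paper's: reduce to the weight sequence, take logarithms, and show that the two linear recurrences force constancy via a coprimality statement about their characteristic polynomials. The paper phrases the key step as: the polynomials $p_{k}(z)=kz^{k}-(z^{k-1}+\cdots+1)$ for $k\in\{m-1,n-1\}$ (which are, up to the reversal $z\mapsto 1/z$, your $R_m$ and $R_n$) share only the root $z=1$; it does \emph{not} prove this directly but cites \cite[Example~3.7]{kaj}, and then finishes via \cite[Lemma~4.1]{ja} and \cite[Theorem~3.1.1]{hal}, which play the role of your Bezout step.

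Your write-up, however, has a genuine gap at precisely this point. You correctly show that every root of $R_m$ satisfies $|\lambda|>1$, but the identical argument applies to $R_n$, so this observation alone cannot separate the two root sets. Your proposed completion --- ``a direct substitution of $\lambda^{m}=m\lambda-(m-1)$ into $P_{n-m}(\lambda-1)$ reduces the problem to a lower-complexity polynomial identity whose roots can be excluded by sharper magnitude estimates'' --- is a plan, not a proof: you neither specify the estimates nor explain why they succeed, and the two facts you have isolated ($|\lambda|>1$ and positivity of the coefficients of $P_k$) are by themselves compatible with a hypothetical common complex root. To complete the argument you must either prove $\gcd(R_m,R_n)=1$ in $\mathbb{Q}[x]$ outright, or invoke the literature result (as the paper does) that the polynomials $\frac{d}{dz}\frac{z^{k+1}-1}{z-1}$ are pairwise coprime for distinct $k$. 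Once that is in place, your Bezout finish is clean.
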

\begin{proof}
Let $A$ be a  injective bilateral weighted shift with weights $\{\lambda_l\}_{l=-\infty}^\infty$ and $\{e_l\}_{l=-\infty}^\infty$ be the standard $0$-$1$  orthonormal basis of $\ell^2(\mathbb{Z})$. 
 By \cite[Theorem 3.2.1]{memo}, we can assume without loss of generality that $\lambda_l > 0$ for
all $l \in \mathbb{Z}$. Suppose that $A$ satisfies \eqref{xukl} with $S=\{m,n\}$. It is easy to verify that the equation
\begin{equation*}
    A^{*k}A^ke_l=(A^{*}A)^ke_l,\quad l\in \mathbb{Z}, \, k\in\{m,n\},
\end{equation*}
implies that 
\begin{equation}
    \lambda_l^k=\lambda_{l}\lambda_{l+1}\cdots\lambda_{l+k-1},\quad l\in \mathbb{Z},\, k\in\{m,n\}.
\end{equation}
Hence, the sequence $\{a_l\}_{l=-\infty}^\infty$, where $a_l:=\ln \lambda_l$  satisfies the following recurrence relation for every $k\in \{m,n\}$,
\begin{equation} \label{dus1}
   (k-1)a_l=a_{l+1}+\cdots+a_{l+k-1},\quad l\in \mathbb{Z}.
\end{equation} 
Observe that for every $k\in \{m-1,n-1\}$, the polynomial $\frac{1}{k}p_k(z)$ defined by 
\begin{equation*}
p_k(z):= kz^k - (z^
{k-1} + z^{k-2}+\dots+ 1),\quad z\in \mathbb{C},   \end{equation*} 
is  the characteristic polynomial of the recurrence relation \eqref{dus1}. Since 
\begin{equation}
    \frac{p_k(z)}{z-1}=\frac{d}{dz}\frac{z^{k+1}-1}{z-1}
\end{equation}
we infer from \cite[Example 3.7]{kaj} that  the polynomials $p_k$ for $k\in \{m-1,n-1\}$, have only one common
root $z=1$. This combined with \cite[Lemma 4.1]{ja} and \cite[ Theorem 3.1.1]{hal} implies that the sequence $\{a_l\}_{l=-\infty}^\infty$ is constant. Hence, the operator $A$ is a multiple of a unitary operator and as such is quasinormal. This completes
the proof.
\end{proof}

In Theorem \ref{nowechar} below, we propose
a method of constructing new sets $S$ such that \eqref{xukl} characterize quasinormal operators.

\begin{theorem}\label{nowechar}
Let $n\in \natu$ and   $S\subset \natu$ be a nonempty set   such that
any bounded operator $A$ on $\hil$ that satisfies \eqref{xukl} is quasinormal.
 Then set $\hat{S}=\{n\} \cup \{ns\:| s\in S\}$ also  has this property.
\end{theorem}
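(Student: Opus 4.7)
The plan is to apply the hypothesis on $S$ not to $A$ itself, but to the power $B := A^n$. The extra single equation $A^{*n}A^n = (A^*A)^n$ (coming from the element $n \in \hat{S}$) is precisely the bridge that lets us translate between $\hat{S}$-conditions on $A$ and $S$-conditions on $B$, and afterwards lets us invoke Lemma \ref{pomoc} to pass from quasinormality of $B$ back to quasinormality of $A$.

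First, I would set $B = A^n$ and verify that $B$ satisfies \eqref{xukl} with the set $S$. For $s \in S$, the hypothesis $s \in S \subset \hat{S}$ gives (via $ns \in \hat{S}$) that
\begin{equation*}
B^{*s} B^s = A^{*ns} A^{ns} = (A^*A)^{ns},
\end{equation*}
while on the other hand, using $A^{*n}A^n = (A^*A)^n$,
\begin{equation*}
(B^*B)^s = (A^{*n}A^n)^s = ((A^*A)^n)^s = (A^*A)^{ns}.
\end{equation*}
Thus $B^{*s}B^s = (B^*B)^s$ for every $s \in S$, so by the standing assumption on $S$, the operator $B = A^n$ is quasinormal.

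Next, quasinormality of $A^n$ means that $A^n$ commutes with $(A^n)^*A^n = A^{*n}A^n = (A^*A)^n$. Since commutants of a positive operator and of its $n$-th root coincide (Theorem \ref{rsn}), $A^n$ commutes with $A^*A$ itself. At this stage the two hypotheses of Lemma \ref{pomoc} are in place: $A^*A$ commutes with $A^n$, and $A^{*n}A^n = (A^*A)^n$. Applying that lemma with $k = n$ yields that $A$ is quasinormal, which is the desired conclusion.

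There is essentially no obstacle here: the argument is a clean two-step reduction. The only point that requires a little care is the passage from ``$A^n$ commutes with $(A^*A)^n$'' to ``$A^n$ commutes with $A^*A$,'' but this is exactly what Theorem \ref{rsn} is designed for. Thus the entire proof fits in a few lines once one recognizes the substitution $B = A^n$.
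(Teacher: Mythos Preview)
Your proof is correct and follows essentially the same route as the paper's: show $B=A^n$ satisfies \eqref{xukl} for $S$ (using $n\in\hat S$ and $ns\in\hat S$), deduce $A^n$ is quasinormal, pass from commutation with $(A^*A)^n$ to commutation with $A^*A$ via Theorem~\ref{rsn}, and finish with Lemma~\ref{pomoc}. One small wording slip: you write ``$s\in S\subset\hat S$'', but $S$ need not be contained in $\hat S$; what you actually use (and correctly note in the parenthesis) is just that $ns\in\hat S$ for each $s\in S$.
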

\begin{proof}
Suppose $A$ is operator which satisfies 
\eqref{xukl} with $S=\hat{S}$.
It follows that
\begin{equation*}
   (A^n)^{*s}(A^n)^{s}=A^{*sn}A^{sn}=(A^{*}A)^{sn}= ((A^{*}A)^{n})^s=(A^{*n}A^n)^{s}.
\end{equation*}
Since the operator $A^n$ satisfies  \eqref{xukl} with subset  $S$, hence it is quasinormal. Thus we obtain
\begin{equation}\label{nam}
    A^{*n}A^nA^n=A^nA^{*n}A^n.
\end{equation}
Operator $A$ also satisfy  equation \eqref{xrow}. We conclude from  \eqref{nam} and  Theorem  \ref{rsn}  that operator $A^n$ commutes  with $A^*A$.
Now applying Lemma \ref{pomoc} completes the proof.
\end{proof}

Now we give another proof of Theorem \ref{gll} in the case of $S=\{p,q, p+q, 2p, 2p+q\}$. It is worth noting that this proof uses only elementary properties of $C^*$algebras and Theorem \ref{rsn}. We do not use theory of operator monotone and convex functions.   We begin by proving the following lemma:
\begin{lemma}\label{komut}
Let $p,q\in \mathbb{N}$ and $A$ be bounded operator on $\hh$ which satisfies  \eqref{xukl} with  $S=\{p,q, p+q, 2p, 2p+q\}$. Then operator $A^q$ commute with $A^{*p}A^{p}$.
\end{lemma}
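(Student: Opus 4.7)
Set $P := A^{*p}A^p$ and $T := PA^q - A^q P$; the lemma claims $T = 0$, which will follow once we show $T^*T = 0$. The engine is a pair of ``sandwich'' identities for $P$ between $A^{*q}$ and $A^q$. From the hypothesis $A^{*(p+q)}A^{p+q} = (A^*A)^{p+q}$, factor the left-hand side as $A^{*q}(A^{*p}A^p)A^q = A^{*q}PA^q$ and the right-hand side as $(A^*A)^p(A^*A)^q$; using the hypotheses for $s = p$ and $s = q$, this yields
\begin{equation*}
A^{*q}PA^q = P \cdot A^{*q}A^q.
\end{equation*}
Similarly, from $A^{*(2p+q)}A^{2p+q} = (A^*A)^{2p+q}$, together with $A^{*2p}A^{2p} = (A^*A)^{2p} = P^2$ (by the hypotheses for $s = p$ and $s = 2p$), one obtains
\begin{equation*}
A^{*q}P^2A^q = P^2 \cdot A^{*q}A^q.
\end{equation*}
Since $P = (A^*A)^p$ and $A^{*q}A^q = (A^*A)^q$ are both polynomials in $A^*A$, they commute, so the two right-hand sides may equally well be written with the factors reversed.

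Next, expand
\begin{equation*}
T^*T = A^{*q}P^2A^q - (A^{*q}PA^q)P - P(A^{*q}PA^q) + P(A^{*q}A^q)P.
\end{equation*}
Apply the sandwich identities to each of the four terms, choosing for each instance the form that allows the adjacent factor of $P$ to be absorbed into the product (using that $P$ commutes with $A^{*q}A^q$). A short computation shows that every term reduces to $A^{*q}A^q \cdot P^2$, and with signs $+,-,-,+$ the expression collapses to $T^*T = 0$; hence $T = 0$, which is the asserted commutativity $A^q P = P A^q$.

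The main obstacle is purely bookkeeping: at each use of the identity $A^{*q}PA^q = PA^{*q}A^q = A^{*q}A^q P$ one must pick the correct version so that the external $P$ merges cleanly with the interior rather than leaving an uncontrolled commutator behind. Notably, no theory of operator monotone or convex functions is invoked---only the hypotheses and the fact that functions of $A^*A$ commute---consistent with the author's announcement that this proof will use only elementary $C^*$-algebra properties together with Theorem \ref{rsn}.
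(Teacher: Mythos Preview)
Your proof is correct and is essentially the same argument as the paper's. Both show that the commutator $T=PA^q-A^qP$ (equivalently $A^{*p}A^{p+q}-A^qA^{*p}A^p$) vanishes by expanding $T^*T$ (respectively $\|Tf\|^2$) and using the hypotheses for $s\in\{p,q,p+q,2p,2p+q\}$ to reduce every term to $(A^*A)^{2p+q}$; your second ``sandwich identity'' $A^{*q}P^2A^q=P^2A^{*q}A^q$ is exactly the paper's preliminary identity \eqref{rec} (since $A^{*(p+q)}A^pA^{*p}A^{p+q}=A^{*q}P^2A^q$ and $P^2A^{*q}A^q=(A^*A)^{2p+q}$), and the remaining simplifications match term by term.
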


\begin{proof}

It follows from our
assumptions that the following chain of
equalities holds:
\begin{align}\label{rec}
A^{*(p+q)} A^{p}A^{*p}A^{p+q}&=A^{*q}(A^{*p}A^{p})^2A^q=A^{*q}(A^{*}A)^{2p}A^q
\\&
=A^{*q}(A^{*2p}A^{2p})A^q=A^{*2p+q}A^{2p+q}=(A^*A)^{2p+q}. \notag
\end{align}
We show that operators  $A^q$ and $A^{*p}A^{p}$ commute. Indeed, if $f\in \mathcal{H}$, then
\begin{align*}
\|(A^{*p}A^{p+q}&-A^qA^{*p}A^{p})f\|^2=\langle A^{*p}A^{p+q}f,A^{*p}A^{p+q}f\rangle
\\&-2\re \langle A^{*p}A^{p+q}f,A^qA^{*p}A^{p}f\rangle
+\langle A^qA^{*p}A^{p}f,A^qA^{*p}A^{p}f\rangle
\\&
=\langle A^{*(p+q)} A^{p}A^{*p}A^{p+q}f,f\rangle
-2\re\langle A^{*p}A^{p}A^{*q}A^{*p}A^{p+q}f,f\rangle
\\&
+\langle A^{*p}A^{p}A^{*q}A^qA^{*p}A^{p}f,f\rangle
\\&\overset{\eqref{rec}}
=\langle(A^*A)^{2p+q} f,f\rangle
-2\re\langle A^{*p}A^{p}A^{*(p+q)}A^{p+q}f,f\rangle
\\&
+\langle A^{*p}A^{p}(A^{*q}A^q)A^{*p}A^{p}f,f\rangle
\\&
=\langle(A^*A)^{2p+q} f,f\rangle
-2\re\langle (A^{*}A)^{p}(A^{*}A)^{p+q}f,f\rangle
\\&
+\langle (A^{*}A)^{p}(A^{*}A)^q(A^{*}A)^{p}f,f\rangle=0,
\end{align*}
which implies
\begin{equation*}
A^{*p}A^{p+q}=A^qA^{*p}A^{p}.
\end{equation*}
This completes the proof.
\end{proof}
As an immediate consequence of Lemma \ref{komut}  we obtain the next Theorem.
\begin{theorem}\label{jjj}
Let $p,q\in \mathbb{N}$ and $A$ be bounded operator in $\hh$ which satisfies  \eqref{xukl} with  $S=\{p,q, p+q, 2p, 2p+q\}$. Then operator $A$ is quasinormal.
\end{theorem}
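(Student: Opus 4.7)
The plan is to deduce Theorem \ref{jjj} directly from Lemma \ref{komut} by combining it with Theorem \ref{rsn} (coincidence of commutants of a positive operator and its roots) and Lemma \ref{pomoc}.

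First, I would invoke Lemma \ref{komut}: from the hypothesis \eqref{xukl} with $S=\{p,q,p+q,2p,2p+q\}$, the operator $A^q$ commutes with $A^{*p}A^p$. Since $p\in S$, the assumption \eqref{xukl} for $s=p$ gives $A^{*p}A^p=(A^*A)^p$, so $A^q$ commutes with the positive operator $(A^*A)^p$. By Theorem \ref{rsn}, the commutants of $(A^*A)^p$ and of its $p$-th root $A^*A$ coincide, hence $A^q$ commutes with $A^*A$.

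Next, since $q\in S$, the assumption \eqref{xukl} for $s=q$ gives $A^{*q}A^q=(A^*A)^q$. Thus $A$ satisfies the two conditions required in Lemma \ref{pomoc} with $k=q$: namely $A^*A$ commutes with $A^q$, and $A^{*q}A^q=(A^*A)^q$. Applying Lemma \ref{pomoc} yields the quasinormality of $A$, which completes the argument.

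There is essentially no obstacle here, since all the nontrivial work has already been packaged into Lemma \ref{komut} (which produced the commutation of $A^q$ with $A^{*p}A^p$ via the polarization-style expansion of $\|(A^{*p}A^{p+q}-A^qA^{*p}A^p)f\|^2$) and Lemma \ref{pomoc} (which upgrades commutation of $A^k$ with $A^*A$ together with \eqref{xrow} to full quasinormality through Theorem \ref{ucc}). The proof of Theorem \ref{jjj} is therefore just the two-line synthesis outlined above; the conceptual point is that the five equations in $S=\{p,q,p+q,2p,2p+q\}$ are exactly what is needed to run Lemma \ref{komut} and then feed its conclusion into Lemma \ref{pomoc}, and this avoids any appeal to operator-convex-function machinery or to the Davis--Choi--Jensen inequality.
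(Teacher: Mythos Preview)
Your proposal is correct and follows essentially the same approach as the paper's own proof. The only cosmetic difference is that the paper, after reaching the commutation $A^qA^*A=A^*AA^q$, splits into the cases $q=1$ and $q>1$ and in the latter case rederives by hand the equations for $S=\{q,q+1,2q,2q+1\}$ before invoking Theorem \ref{ucc}; you instead invoke Lemma \ref{pomoc} directly, which packages exactly that computation.
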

\begin{proof}
Applying Lemma \ref{komut}, we deduce that   
\begin{equation*}
A^qA^{*p}A^{p}=A^{*p}A^{p}A^q.
\end{equation*}
This, equation \eqref{xrow} with $n=p$ and  Theorem \ref{rsn} yields
\begin{equation}\label{www}
A^qA^{*}A=A^{*}A^{}A^q.
%\footnote{W tym momencie mogliśmy zauważyć, że operator $A$ spełnia równanie \eqref{xrow} z $n=q$ co wraz z \eqref{www} i Lematem \ref{pomoc} daje quasinormalność operatora $A$. Chcieliśmy jednak podkreślić, że rezultat ten można uzyskać bez korzystania z teorii funkcji operatorowo monotonicznych L\"ownera.}
\end{equation}
To complete the proof, it suffices to consider two disjunctive cases.

\textsc{Case} 1.
If $q=1$, then by \eqref{www} operator  $A$ is quasinormal. 

\textsc{Case} 2. We now consider the other case when $q>1$, then it follows from \eqref{www} that
\begin{align*}
    A^{*2q}A^{2q}&=A^{*q}A^{q*}A^qA^q=A^{*q}(A^{*}A)^qA^q\\&=A^{*q}A^q(A^{*}A)^q=(A^{*}A)^{2q}\\
    A^{*sq+1}A^{sq+1}&=A^{*sq}A^{*}AA^{sq}=A^{*sq}A^{sq}A^{*}A=(A^{*}A)^{sq+1},
\end{align*}
for $s=1,2$. Hence  operator $A$ satisfies  \eqref{xukl}  with  $S=\{q,q+1,2q,2q+1\}$. This and   Case 1. imply that   operator $A$ is quasinormal.
\end{proof}

The new operator transform $\hat{A}$ 
of $A$ from the class $A(k)$ to the class of hyponormal operators was introduced  in
\cite{mary}. We define $\hat{A}$ by
\begin{equation*}
 \hat{A}:=WU||A|^kA|^\frac{1}{k+1},   
\end{equation*}
where $|A||A^*|=W||A||A^*||$ is the polar decomposition. The following Theorem guarantees  $\hat{A}$ is indeed hyponormal.

\begin{theorem}$($cf. \cite[ Theorem 1]{mary} $)$.\label{pop}
Let $A=U|A|$ be the polar decomposition of  class $A(k)$ operator, then operator $\hat{A}:=WU||A|^kA|^\frac{1}{k+1}$ is hyponormal, where $|A||A^*|=W||A||A^*||$ is the polar decomposition.
\end{theorem}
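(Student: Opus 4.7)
The plan is to verify the hyponormality inequality $\hat A^*\hat A \ge \hat A\hat A^*$ by direct computation, using the defining inequality of class $A(k)$ together with the Löwner--Heinz inequality (Theorem \ref{lohe}).

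First, I would set $T := ||A|^kA|$, so that $T^2 = A^*|A|^{2k}A$; the class $A(k)$ hypothesis then reads $T^{2/(k+1)} = (A^*|A|^{2k}A)^{1/(k+1)} \ge |A|^2$, and Löwner--Heinz applied with exponent $1/2$ yields the consequence $T^{1/(k+1)} \ge |A|$, which will drive the rest of the argument. Expanding the two products in the definition of hyponormality gives
\begin{equation*}
\hat A^*\hat A = T^{1/(k+1)}\, U^*W^*WU\, T^{1/(k+1)}, \qquad \hat A\hat A^* = WU\, T^{2/(k+1)}\, U^*W^*,
\end{equation*}
so the task reduces to a comparison between two noncommutative rearrangements of the same positive data.

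Next I would unpack the two polar decompositions. Because $A = U|A|$, the identity $AA^* = U|A|^2U^* = |A^*|^2$ upgrades by functional calculus to $U|A|^mU^* = |A^*|^m$ on $\overline{\ran A)}$ for every $m\ge 0$. Because $|A||A^*| = W||A||A^*||$ is the polar decomposition, $W^*W$ is the projection onto $\overline{\ran ||A||A^*||)}$, which is contained in $\overline{\ran |A|)}$ and absorbs the $U^*U$ factor appearing inside $U^*W^*WU$. Together these identifications let one rewrite $\hat A^*\hat A$ essentially as $T^{2/(k+1)}$ restricted to the appropriate spectral subspace.

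The main obstacle will be handling $\hat A\hat A^*$: one must show that the $WU$-conjugation of $T^{2/(k+1)}$ is dominated by the reduced form of $\hat A^*\hat A$. The natural route is Hansen's inequality (Theorem \ref{hans}) applied to the operator monotone function $f(x) = x^{2/(k+1)}$ (which is admissible since $2/(k+1)\le 1$ whenever $k\ge 1$) and to the contraction $(WU)^*$, combined with $T^{1/(k+1)} \ge |A|$ and the polar-decomposition identifications above. The bookkeeping in this step is delicate, since it requires reconciling the two partial isometries $U$ and $W$ on their respective initial and final spaces; this is precisely where the specific shape of the transform $\hat A = WU\,T^{1/(k+1)}$ earns its keep, as the factor $W$ corrects exactly the mismatch produced by the relation $U|A|^mU^* = |A^*|^m$.
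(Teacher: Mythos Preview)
The paper does not prove this statement at all: Theorem~\ref{pop} is quoted verbatim from \cite[Theorem~1]{mary} and used as a black box in the subsequent theorem, so there is no ``paper's own proof'' to compare against. Your write-up is therefore not a reproduction but an independent attempt, and it should be judged on its own merits.

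As a sketch, the direction is reasonable --- computing $\hat A^*\hat A$ and $\hat A\hat A^*$ and comparing them via the class~$A(k)$ inequality and operator-monotonicity of $x^{2/(k+1)}$ is the natural strategy, and it is indeed how the original source proceeds. However, what you have written is only a plan, not a proof: you yourself flag the crucial step as ``delicate'' and do not carry it out. Two concrete gaps stand out. First, you invoke Theorem~\ref{hans} (Hansen's inequality) with the partial isometry $(WU)^*$ playing the role of the compression, but Theorem~\ref{hans} as stated in this paper is formulated only for a \emph{projection} $P$, not for an arbitrary contraction; you would need the Hansen--Pedersen contraction version, which the paper does not record. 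Second, the identification of $W^*W$ and its interaction with $U^*U$ is asserted rather than verified: you claim $\overline{\ran ||A||A^*||)} \subset \overline{\ran |A|)}$, but the initial space of $W$ is $\overline{\ran(|A^*||A|^2|A^*|)^{1/2}}$, which lives on the $|A^*|$ side, and sorting out how this threads through $U$ to reduce $\hat A^*\hat A$ to (a compression of) $T^{2/(k+1)}$ is precisely the content of the argument, not a side remark. Until those two points are made explicit, the proposal remains an outline rather than a proof.
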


The following characterization of quasinormality  can be deduced from Theorem \ref{pop} and \cite[Proposition 2.3.]{uch}.

\begin{theorem}
Let $k\in \natu$. Bounded operator $A$ is quasinormal iff it satisfies \eqref{xukl} with $S=\{k,k+1\}$ and $ |A||A^*|=P_{\overline{\ran A)}}||A||A^*||$ is the polar decomposition.
\end{theorem}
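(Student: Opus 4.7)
The plan is to exploit the operator transform $\hat{A}:=WU\,||A|^kA|^{1/(k+1)}$ of Theorem~\ref{pop} and observe that the two conditions in (ii) are precisely what force $\hat{A}=A$. Once $A=\hat{A}$ is seen to be hyponormal, the implication \emph{hyponormal} $+$ \eqref{xrow} $\Rightarrow$ \emph{quasinormal} supplied by Theorem~\ref{hyp} (equivalent to \cite[Proposition 2.3]{uch}) closes the argument.

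For the direction (ii)$\Rightarrow$(i), I would first check that $A$ belongs to the class $A(k)$: the equality $A^{*(k+1)}A^{k+1}=(A^*A)^{k+1}$ produces $(A^{*(k+1)}A^{k+1})^{1/(k+1)}=A^*A$, so the defining inequality of $A(k)$ holds with equality and Theorem~\ref{pop} applies, making $\hat{A}$ hyponormal. Using \emph{both} equations in \eqref{xukl}, one computes
\begin{equation*}
||A|^kA|^2=A^*|A|^{2k}A=A^*(A^*A)^kA=A^*A^{*k}A^kA=A^{*(k+1)}A^{k+1}=(A^*A)^{k+1}=|A|^{2(k+1)},
\end{equation*}
so $||A|^kA|^{1/(k+1)}=|A|$. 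Consequently $\hat{A}=WU|A|=WA$, and because $\ran A)\subset\overline{\ran A)}$, the polar-decomposition hypothesis $W=P_{\overline{\ran A)}}$ forces $WA=A$; thus $A=\hat{A}$ is hyponormal, and Theorem~\ref{hyp} applied to the single equation $A^{*(k+1)}A^{k+1}=(A^*A)^{k+1}$ (with $k+1\geq 2$) delivers quasinormality.

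For the converse (i)$\Rightarrow$(ii), Embry's characterization \eqref{wst} immediately supplies \eqref{xukl} for every $n\in\natu$, in particular for $S=\{k,k+1\}$. To handle the polar-decomposition condition, write $A=U|A|$ and use the bounded form of quasinormality $U|A|=|A|U$; then
\begin{equation*}
AA^*=U|A|^2U^*=|A|^2UU^*=|A|^2P_{\overline{\ran A)}},
\end{equation*}
so $|A^*|=|A|P_{\overline{\ran A)}}$ and $|A||A^*|=|A|^2P_{\overline{\ran A)}}$ is positive self-adjoint. The polar decomposition of a positive operator $T$ takes $W=P_{\overline{\ran T)}}$, and the identification $\overline{\ran |A||A^*|)}=\overline{\ran A)}$ (which follows from the inclusion $\nul A)\subset\nul A^*)$ valid for quasinormal $A$, forcing $\overline{\ran A)}\cap\nul A)=\{0\}$) then yields $W=P_{\overline{\ran A)}}$.

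The main obstacle I expect is the last identification $\overline{\ran |A||A^*|)}=\overline{\ran A)}$ in the converse direction, which rests on the range/kernel inclusion peculiar to quasinormal operators; the remaining steps are routine algebra and direct applications of Theorems~\ref{pop} and~\ref{hyp}.
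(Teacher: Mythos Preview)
Your proposal is correct and follows essentially the same route as the paper: for (ii)$\Rightarrow$(i) you verify $A\in A(k)$, compute $||A|^kA|^{1/(k+1)}=|A|$ from the two equations, conclude $\hat A=P_{\overline{\ran A)}}U|A|=A$, and then invoke hyponormal $+$ \eqref{xrow} $\Rightarrow$ quasinormal---exactly the paper's argument. The only difference is in (i)$\Rightarrow$(ii): the paper observes directly that $|A||A^*|=|A|U|A|U^*=U|A|^2U^*=(U|A|U^*)^2=|A^*|^2$, so the operator is positive with $\overline{\ran|A^*|^2)}=\overline{\ran A)}$ immediately; your detour via $|A||A^*|=|A|^2P_{\overline{\ran A)}}$ is also correct but forces you to use $\nul A)\subset\nul A^*)$ to recover the range, which is the ``obstacle'' you flagged and which the paper's computation sidesteps entirely.
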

\begin{proof}
Note that if $A$ is quasinormal, then it   \eqref{xukl} for $S=\{k,k+1\}$ (see \eqref{wst}). Let $A=U|A|$ be the polar decomposition. Since $|A^*|= U|A|U^*$ yields
 \begin{equation*}
   |A||A^*|= |A|U|A|U^*=U|A|^2U^*=(U|A|U^*)^2=|A^*|^2
\end{equation*}
we deduce that
\begin{equation*}
    |A||A^*|=P_{\overline{\ran A)}}|A^*|^2
\end{equation*}
is the polar decomposition.

We now show the reverse implication. Since $A$ satisfy  \eqref{xukl} with $S=\{k,k+1\}$, we see that $(A^*|A|^{2k}A)^\frac{1}{k+1}\geq |A|^2$.
Hence, 
operator $A$ is in class $A(k)$. By Theorem \ref{pop} the transform  $\hat{A}$ is  hyponormal operator. By our assumption, we have
\begin{equation*}
   \hat{A}=WU||A|^kA|^\frac{1}{k+1}=P_{\overline{\ran A)}}U|A|=A.
\end{equation*}
This yields $A$ is also hyponormal operator. This and the fact that   $A$ satisfy  \eqref{xukl}  with $S=\{k,k+1\}$ combined with \cite[Proposition 2.3.]{uch} completes the proof.
\end{proof}
\section{A characterization of normality}

The single equality \eqref{xrow}  implies the normality of compact operators \cite[Preposition]{uch}. In general, neither  the single equality \eqref{xrow} nor the system of equations \eqref{xukl} imply the normality. We obtain a new characterization of the normal operators which resembles that for the quasinormal operators (cf.  Theorem \ref{gll}).
%Uzyskamy teraz podobne charakteryzacje dla operatora normalnego zakładając, że operator $A$ i $A^*$ spełniają pewne układy równań postaci \eqref{xukl}.

We begin by proving a few auxiliary lemmas.
The first one will be deduced from  Lemma \ref{fug}.
\begin{lemma}\label{przep}
Let $k\in\mathbb{N}$, and $A$ be a  bounded operator such that both  operators $A$ and $A^*$ ssatisfies the equality  \eqref{xrow}. Then the following equation hold:
\begin{equation*}
AA^{*}A^k=A^kA^{*}A
\end{equation*}
\end{lemma}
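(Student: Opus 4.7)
\medskip

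The plan is to reduce the claimed identity $AA^{*}A^{k}=A^{k}A^{*}A$ to an application of Lemma \ref{fug}, which converts an equality of the form $TM^{k}=N^{k}T$ (with $M,N$ positive) into $TM=NT$. The natural choice is $M:=A^{*}A$, $N:=AA^{*}$ and $T:=A^{k}$, so that the target equation $AA^{*}A^{k}=A^{k}A^{*}A$ is precisely $NT=TM$, while the hypothesis needed to invoke Lemma \ref{fug} is $TM^{k}=N^{k}T$, i.e.\ $A^{k}(A^{*}A)^{k}=(AA^{*})^{k}A^{k}$.

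Thus the whole job is to verify this last identity. First I would use the hypothesis for $A^{*}$, namely $A^{k}A^{*k}=(AA^{*})^{k}$ (this is equation \eqref{xrow} applied to $A^{*}$), to rewrite
\[
(AA^{*})^{k}A^{k}=A^{k}A^{*k}A^{k}.
\]
Then I would apply the hypothesis for $A$, namely $A^{*k}A^{k}=(A^{*}A)^{k}$, to the right factor:
\[
A^{k}A^{*k}A^{k}=A^{k}(A^{*}A)^{k}.
\]
Combining these two equalities gives $(AA^{*})^{k}A^{k}=A^{k}(A^{*}A)^{k}$, which is exactly the input required by Lemma \ref{fug}.

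Having established $TM^{k}=N^{k}T$ with $M=A^{*}A\ge 0$ and $N=AA^{*}\ge 0$, an appeal to Lemma \ref{fug} yields $TM=NT$, that is
\[
A^{k}A^{*}A=AA^{*}A^{k},
\]
which is the desired conclusion. There is no real obstacle in this argument; the only substantive ingredient beyond the two hypotheses is the Berberian-trick/spectral lemma packaged as Lemma \ref{fug}, which precisely encodes the step from commuting with a $k$-th power of a positive operator to commuting with the operator itself.
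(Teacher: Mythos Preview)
Your proof is correct and follows essentially the same route as the paper: both arguments establish the identity $A^{k}(A^{*}A)^{k}=(AA^{*})^{k}A^{k}$ by inserting $A^{*k}A^{k}$ between $A^{k}$ and $A^{k}$ and then replacing $A^{*k}A^{k}$ and $A^{k}A^{*k}$ via the two hypotheses, after which Lemma~\ref{fug} with $T=A^{k}$, $M=A^{*}A$, $N=AA^{*}$ gives the conclusion.
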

\begin{proof}
It is clear that 
\begin{equation*}
A^k(A^{*k}A^{k})=(A^kA^{*k})A^{k}.
\end{equation*}
Using equalities $A^{*k}A^{k}=(A^{*}A)^{k}$ and $A^{k}A^{*k}=(AA^{*})^{k}$ we obtain the equality 
\begin{equation}\label{r1}
A^k(A^{*}A)^{k}=(AA^{*})^kA^{k}.
\end{equation}
Employing Lemma \ref{fug} completes the proof. 
\end{proof}
We will derive Theorem \ref{md} from the following more technical result.

\begin{lemma}\label{gl}
Let $k\in\mathbb{N}$ and  $A$ be a bounded operator in $\hh$ such that both operators $A$ and $A^{*}$  satisfies \eqref{xukl} with $S=\{k,k+1\}$. 
Then $A$ is quasinormal.
\end{lemma}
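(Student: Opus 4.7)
The plan is to prove that $DA^k=0$, where $D:=A^*A-AA^*$; combined with the hypothesis $A^{*k}A^k=(A^*A)^k$, this will say that $A^k$ commutes with $A^*A$, and then Lemma \ref{pomoc} forces $A$ to be quasinormal. Write $B:=A^*A$ and $C:=AA^*$. Applying Lemma \ref{przep} first at $n=k$ (using $A^{*k}A^k=B^k$ together with $A^kA^{*k}=C^k$ from the $A^*$-hypothesis), and then at $n=k+1$, yields the two intertwining identities
\begin{equation*}
A^kB=CA^k \qquad\text{and}\qquad A^{k+1}B=CA^{k+1}.
\end{equation*}

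I would then perform two parallel calculations. For the first, $A^{*k}DA^k=0$: on one hand $A^{*k}BA^k=A^{*(k+1)}A^{k+1}=B^{k+1}$ by the $(k+1)$-hypothesis, and on the other, the adjoint of $A^kB=CA^k$ reads $BA^{*k}=A^{*k}C$, so $A^{*k}CA^k=BA^{*k}A^k=B\cdot B^k=B^{k+1}$. For the second, $ADA^k=0$: since $AB=CA$ holds automatically, $ABA^k=CA\cdot A^k=CA^{k+1}$, while the $n=k$ intertwining gives $ACA^k=A(A^kB)=A^{k+1}B$, and the difference $CA^{k+1}-A^{k+1}B$ vanishes by the $n=k+1$ intertwining.

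Next I combine the two identities. Multiplying $ADA^k=0$ on the left by $A^*$ gives $BDA^k=0$, hence $\operatorname{range}(DA^k)\subset\ker B=\ker A$. On the other hand, $A^{*k}DA^k=0$ places $\operatorname{range}(DA^k)\subset\ker A^{*k}$, and $|A^{*k}|^2=A^kA^{*k}=C^k=|A^*|^{2k}$ (from the $A^*$-hypothesis) gives $|A^{*k}|=|A^*|^k$, whence $\ker A^{*k}=\ker A^*$. Thus $\operatorname{range}(DA^k)\subset\ker A\cap\ker A^*$, where $B$ and $C$ (and therefore $D=B-C$) both vanish. Consequently $D^2A^k=0$; self-adjointness of $D$ gives $\ker D^2=\ker D$, so $DA^k=0$. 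This last identity says $BA^k=CA^k=A^kB$ (using the $n=k$ intertwining), so $A^k$ commutes with $A^*A$, and Lemma \ref{pomoc} finishes the proof.

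The main obstacle I anticipate is spotting that the two computed identities together trap $\operatorname{range}(DA^k)$ inside $\ker A\cap\ker A^*$, precisely where $D$ is forced to vanish; once this is in hand, self-adjointness of $D$ upgrades $D^2A^k=0$ to $DA^k=0$, and the remainder is bookkeeping.
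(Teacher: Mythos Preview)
Your proof is correct. Both your argument and the paper's begin from Lemma \ref{przep}, but they diverge from there. The paper applies Lemma \ref{przep} only at the exponent $k+1$, iterates the resulting intertwining to obtain $(AA^*)^kA^{k+1}=A^{k+1}(A^*A)^k$, rewrites this as $A^kA^{*k}A^{k+1}=A^{k+1}A^{*k}A^k$, and then expands $\|(A^{*k}A^{k+1}-AA^{*k}A^{k})f\|^2$ by brute force to see it vanishes; this shows $A$ commutes with $(A^*A)^k$, and Theorem \ref{rsn} finishes. Your route instead invokes Lemma \ref{przep} at both $k$ and $k+1$, and replaces the norm expansion by the structural observation that $\operatorname{range}(DA^k)$ is trapped in $\ker A\cap\ker A^*$, where $D$ necessarily vanishes; you then land on $[A^*A,A^k]=0$ and invoke Lemma \ref{pomoc}. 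So you prove $A^k$ commutes with $A^*A$, while the paper proves $A$ commutes with $(A^*A)^k$; the reductions to quasinormality are correspondingly different (Lemma \ref{pomoc} versus Theorem \ref{rsn}). Your kernel/range argument is arguably more conceptual and avoids the somewhat opaque three-term expansion, at the cost of the extra observation $\ker A^{*k}=\ker A^*$ (which you correctly extract from the $A^*$-hypothesis via $|A^{*k}|=|A^*|^k$).
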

\begin{proof}
Since both operators $A$ and  $A^*$ satisfies  \eqref{xukl}, we can deduce from Lemma \ref{przep} the following equality
\begin{equation*}
AA^{*}A^{k+1}=A^{k+1}A^{*}A.
\end{equation*}
An induction argument shows that for every natural number j the
equality holds

\begin{equation*}
(AA^{*})^jA^{k+1}=A^{k+1}(A^{*}A)^j,
\end{equation*}
for every $j\in \natu$.
In particular for  $j=k$ we have
\begin{equation*}
(AA^{*})^kA^{k+1}=A^{k+1}(A^{*}A)^k.
\end{equation*} This and equation \eqref{xrow} with $n=k$ for operators $A$ and $A^*$ gives
\begin{equation}\label{recrel}
A^kA^{*k}A^{k+1}=A^{k+1}A^{*k}A^k.
\end{equation}
We now prove that operators $A$ and $A^{*k}A^{k}$ commute.
\begin{align*}
\|(A^{*k}A^{k+1}&-AA^{*k}A^{k})f\|^2=\langle A^{*k}A^{k+1}f,A^{*k}A^{k+1}f\rangle
-2\re \langle A^{*k}A^{k+1}f,AA^{*k}A^{k}f\rangle
\\&
+\langle AA^{*k}A^{k}f,AA^{*k}A^{k}f\rangle
\\&
=\langle A^{*(k+1)} (A^{k}A^{*k}A^{k+1})f,f\rangle
-2\re\langle A^{*k}A^{k}A^{*}A^{*k}A^{k+1}f,f\rangle
\\&
+\langle A^{*k}A^{k}A^{*}AA^{*k}A^{k}f,f\rangle
\\&\overset{\eqref{recrel}}
=\langle A^{*(k+1)} (A^{k+1}A^{*k}A^{k})f,f\rangle
-2\re\langle A^{*k}A^{k}A^{*(k+1)}A^{k+1}f,f\rangle
\\&
+\langle A^{*k}A^{k}(A^{*}A)A^{*k}A^{k}f,f\rangle
\\&
=\langle (A^{*}A)^{k+1}(A^{*}A)^{k}f,f\rangle
-2\re\langle (A^{*}A)^{k}(A^{*}A)^{k+1}f,f\rangle
\\&
+\langle (A^{*}A)^{k}(A^{*}A)(A^{*}A)^{k}f,f\rangle=0,
\end{align*}
for every $f\in \mathcal{H}$.
Hence,
\begin{equation*}
A^{*k}A^{k+1}=AA^{*k}A^{k}.
\end{equation*}
Using \eqref{xrow} with $n=k$, we get
\begin{equation*}
(A^{*}A)^{k}A=A(A^{*}A)^{k}.
\end{equation*}
By  Theorem \ref{rsn}, we see that
\begin{equation*}
A^{*}A^{2}=AA^{*}A
\end{equation*}
which yields that  operator $A$ is quasinormal. This completes the proof.
\end{proof}
Now we are in a position to prove the aforementioned characterization of normality
of  operators, which is a direct consequence of the above Lemma.
\begin{theorem}\label{md}
Let $k \in \mathbb{N}$, and $A$ be a bounded operator in $\hil$. Then the following conditions are equivalent:
\begin{enumerate}
\item[(i)] both operator $A$ and $A^{*}$ satisfies  \eqref{xukl} with $S=\{k,k+1\}$,
\item[(ii)] operator $A$ is  normal.
\end{enumerate}
\end{theorem}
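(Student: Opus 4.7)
The implication (ii)$\Rightarrow$(i) is immediate: normality of $A$ implies normality of $A^*$, and by the Brown--Embry characterization \eqref{wst} every normal operator satisfies \eqref{xukl} for every $S\subset\natu$.

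For the converse, the plan is to apply Lemma \ref{gl} twice, exploiting the fact that the hypothesis in (i) is symmetric in $A$ and $A^*$. Taken as stated, the lemma yields that $A$ is quasinormal; taken with $A$ replaced by $A^*$ (so that $A$ and $A^*$ swap roles in the hypothesis, which is invariant under this swap), it yields that $A^*$ is quasinormal. It therefore suffices to prove the following purely algebraic statement: \textit{if both $A$ and $A^*$ are quasinormal, then $A$ is normal.}

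To establish this claim, I would set $D:=A^*A-AA^*$. Quasinormality of $A$ reads $A^*A^2=AA^*A$, while quasinormality of $A^*$, obtained by taking the adjoint of the identity $(AA^*)A^*=A^*(AA^*)$, reads $A^2A^*=AA^*A$. Combining these, $A^*A^2=AA^*A=A^2A^*$, which immediately yields
\begin{equation*}
AD=A(A^*A)-A(AA^*)=AA^*A-A^2A^*=0,\qquad DA=A^*A^2-AA^*A=0.
\end{equation*}
Selfadjointness of $D$ together with $DA=0$ forces $A^*D=0$, so $\ran D)\subset\nul A)\cap\nul A^*)$. But $D$ vanishes on $\nul A)\cap\nul A^*)$, so $D^2=0$; a selfadjoint operator with vanishing square is zero, hence $D=0$ and $A$ is normal.

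I do not foresee a substantial obstacle here: Lemma \ref{gl} absorbs all the analytic content, and what remains is a three-line algebraic manipulation. The only subtle point is noticing that the hypothesis of Lemma \ref{gl} is invariant under $A\leftrightarrow A^*$, which allows the lemma to be invoked twice and deliver quasinormality of both $A$ and $A^*$.
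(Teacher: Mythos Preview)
Your proposal is correct and follows essentially the same route as the paper: apply Lemma \ref{gl} twice (once to $A$, once to $A^*$, using that the hypothesis is symmetric) to get both $A$ and $A^*$ quasinormal, and then deduce normality. The only difference is the last step: the paper argues via the chain quasinormal $\Rightarrow$ hyponormal, so that $\|Af\|\geq\|A^*f\|$ and $\|A^*f\|\geq\|Af\|$ for all $f$, hence $\|Af\|=\|A^*f\|$ and $A$ is normal by \cite[Theorem 12.12]{rud}; your direct algebraic computation with $D=A^*A-AA^*$ is an equally short and valid alternative.
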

\begin{proof}
Employing Lemma \ref{gl} to operator $A$ and $A^{*}$, we get, that both of them are quasinormal. Since  quasinormal operator is hyponormal, we  see that operator $A$ and $A^{*}$ are hyponormal hence,
\begin{equation*}
\|Af\|=\|A^{*}f\|.
\end{equation*}
 By \cite[Theorem 12.12]{rud} $A$  is normal. 
\end{proof}

We conclude this section by giving an analogue of Theorem \ref{gll} in case of an invertible operator.

\begin{theorem}\label{inv}
Let $m,n\in \natu$ be such that $m\leq n$ and $A$ be an invertible operator. Then the following conditions are equivalent:
\begin{itemize}
\item[(i)] operator $A$ and $A^{*}$ satisfies  \eqref{xukl} with $S=\{m,n,m+n\}$,
\item[(ii)] operator $A$ is normal.
\end{itemize}
\end{theorem}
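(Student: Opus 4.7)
The direction (ii)$\Rightarrow$(i) is immediate, as normality forces $A^*A=AA^*$ to commute with all powers of $A$ and $A^*$, so the equations hold trivially for every $s$. For the converse, the plan is to show that the conditions on $A$ alone already suffice (so the $A^*$ half of the hypothesis is actually redundant), with invertibility being the key ingredient that promotes quasinormality to normality at the very end.

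First, I would combine the three hypotheses for $A$ corresponding to $s\in\{m,n,m+n\}$. Expanding gives
\begin{equation*}
A^{*m}A^{*n}A^{n}A^{m}=A^{*(m+n)}A^{m+n}=(A^*A)^{m+n}=(A^*A)^{m}(A^*A)^{n},
\end{equation*}
and then substituting $A^{*n}A^{n}=(A^*A)^{n}$ on the left and $(A^*A)^{m}=A^{*m}A^{m}$ on the right yields
\begin{equation*}
A^{*m}(A^*A)^{n}A^{m}=A^{*m}A^{m}(A^*A)^{n}.
\end{equation*}
Since $A$ is invertible, $A^{*m}$ is invertible and can be cancelled on the left, leaving $(A^*A)^{n}A^{m}=A^{m}(A^*A)^{n}$.

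Next I would apply Lemma \ref{fug} with $T=A^m$ and $M=N=A^*A$, exponent $k=n$, which collapses the last relation to $A^{m}(A^*A)=(A^*A)A^{m}$; that is, $A^m$ commutes with $A^*A$. Together with the hypothesis $A^{*m}A^{m}=(A^*A)^{m}$, Lemma \ref{pomoc} then delivers quasinormality of $A$.

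For the final step, consider the polar decomposition $A=U|A|$; because $A$ is invertible, $U$ is unitary. Quasinormality of $A$ is equivalent to $U|A|=|A|U$, so $U$ commutes with $|A|^{2}=A^*A$, and therefore
\begin{equation*}
AA^{*}=U|A|^{2}U^{*}=|A|^{2}UU^{*}=|A|^{2}=A^{*}A,
\end{equation*}
which is normality. The substantive point, and the only real obstacle, is precisely this last step: quasinormality by itself does not imply normality (non-unitary isometries are the obvious obstruction), and invertibility is exactly what upgrades the partial isometry $U$ to a unitary so that $UU^{*}=I$ can eliminate the asymmetry between $AA^*$ and $A^*A$.
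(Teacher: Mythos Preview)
Your proof is correct. Both your argument and the paper's converge on the same endgame---establish that a power of $A$ commutes with $A^*A$, invoke Lemma~\ref{pomoc} to get quasinormality, then use invertibility to upgrade to normality---but the mechanism for the commutation step differs. The paper takes the polar decomposition $A^n=U_n|A^n|$, rewrites $A^{*n}(A^*A)^mA^n=(A^*A)^{m+n}$ as $|A^n|U_n^*(A^*A)^mU_n|A^n|=|A^n|(A^*A)^m|A^n|$, cancels $|A^n|$ to see that $U_n$ commutes with $(A^*A)^m$, and then pushes through Theorem~\ref{rsn} twice to get $A^n$ commuting with $A^*A$. Your route is more direct: you cancel $A^{*m}$ on the left by invertibility to obtain $(A^*A)^nA^m=A^m(A^*A)^n$ immediately, and a single application of Theorem~\ref{rsn} (or Lemma~\ref{fug}) finishes. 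Your observation that the $A^*$ half of the hypothesis is never used is also correct, and the paper's own proof confirms this implicitly.
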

\begin{proof}
 Let $A^n=U_n|A^n|$ be  the polar decomposition of $A^n$. Since $A^n$ is invertible operator, we conclude that  $U_n$ is unitary operator. By our assumption,
 \begin{equation}\label{inj}
     A^{*n}(A^*A)^mA^{n}=(A^*A)^{m+n}
 \end{equation} 
 It follows from \eqref{inj} and  the polar decomposition of $A^n$ that
 \begin{equation}
     |A^{n}|U_n^*(A^*A)^mU_n |A^{n}|=(A^*A)^{m+n}.
 \end{equation}
 The injectivity of $|A^{n}|$ implies
 \begin{equation}
     U_n^*(A^*A)^mU_n =(A^*A)^m,
 \end{equation}and consequently $(A^*A)^m$ and $U_n$ commute. By Theorem \ref{rsn},  $|A|^n$ and $U_n$ commute. Since $|A|^n=|A^n|$,  $A^n$ is quasinormal. Applying Theorem \ref{rsn} once more we see that $A^*A$ and $A^n$ commute. This and Lemma \ref{pomoc} show that $A$ is quasinormals. Since invertible quasinormal operators are normal the proof is complete. 
 \end{proof}

\section{Operator inequalities}

 Aluthge and Wang \cite{aw3,aw4} showed several results on powers
of $p$-hyponormal and log-hyponormal operators. The study has been
continued by  Furuta i Yanagida \cite{fy1,fy2}, Ito \cite{ito} and Yamazaki \cite{yama}. We collect this results 

\begin{theorem}\label{ph}
Let $m\in \natu$ and $A$ be $p$-hyponormal operator with $p\in (m-1,m]$. Then the following inequalities holds
\begin{itemize}
 \item[(i)] $A^{*n}A^{n}\geq(A^{*}A)^{n}$ and $(AA^{*})^{n}\leq A^{n}A^{*n}$, for every positive integer $n\leq m$,
 \item[(ii)]
 \begin{align*}
   (A^{*n}A^{n})^{\frac{(p+1)}{n}}\geq \dots &\geq  (A^{*m+2}A^{m+2})^{\frac{(p+1)}{m+2}}\\&\geq (A^{*m+1}A^{m+1})^{\frac{(p+1)}{m+1}}\geq (A^{*}A)^{p+1}
 \end{align*}
 and
 \begin{align*}
   (A^{n}A^{*n})^{\frac{(p+1)}{n}}\leq \dots  &\leq (A^{m+2}A^{*m+2})^{\frac{(p+1)}{m+2}}\\&\leq (A^{m+1}A^{*m+1})^{\frac{(p+1)}{m+1}}\leq (AA^{*})^{p+1},
 \end{align*}
 for $n\geq m+1$.
\end{itemize}
\end{theorem}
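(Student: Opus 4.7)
The plan is to establish parts (i) and (ii) separately, both by combining the L\"owner-Heinz inequality (Theorem \ref{lohe}) with the Furuta inequality, since this theorem compiles standardized consequences of $p$-hyponormality from the cited works of Aluthge--Wang, Furuta--Yanagida, Ito, and Yamazaki.

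For part (i), I would start from the defining condition $(A^*A)^p \geq (AA^*)^p$ and apply L\"owner-Heinz to the operator monotone function $t^{q/p}$ for $0 < q \leq p$ to obtain $(A^*A)^q \geq (AA^*)^q$ for every such $q$. I would then iteratively invoke the Furuta inequality---which asserts that $B \geq C \geq 0$ implies $(B^{r/2} C^s B^{r/2})^{(1+r)/(s+r)} \leq B^{1+r}$ for $r \geq 0$ and $s \geq 1$---applied to the pair $(B,C) = ((A^*A)^p, (AA^*)^p)$, with exponents chosen so that, after sandwiching by appropriate powers of $A$ and $A^*$, the $(AA^*)^p$ factor gets absorbed into the intertwined products $A^{n-1} A^{*(n-1)}$. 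A bookkeeping argument then delivers $A^{*n}A^n \geq (A^*A)^n$ for every integer $n \leq m$. The dual inequality $(AA^*)^n \leq A^n A^{*n}$ follows by applying the symmetric argument to $A^*$.

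For part (ii), I would build the decreasing Furuta--Yanagida chain by induction on $n \geq m+1$. The base case $(A^{*(m+1)} A^{m+1})^{(p+1)/(m+1)} \geq (A^*A)^{p+1}$ comes from a single application of the Furuta inequality to $(A^*A)^p \geq (AA^*)^p$ with parameters tuned to the jump from level $m$ to level $m+1$, then sandwiching by $A^*$ and $A$. For the inductive step, assuming $(A^{*n} A^n)^{(p+1)/n} \geq (A^{*(n-1)} A^{n-1})^{(p+1)/(n-1)}$ has been established at level $n$, I would apply the Furuta inequality once more with $r = (p+1)/n$ and an $s$ chosen to bridge levels $n$ and $n+1$, again sandwiching with $A^*$ on the left and $A$ on the right to pick up the missing power. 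The dual chain for $A^n A^{*n}$ is handled by symmetry.

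The main obstacle is the careful bookkeeping of the exponents in each invocation of Furuta's inequality and verifying that the parameter constraints ($r \geq 0$, $s \geq 1$, and the critical hypothesis $(1+r) \leq (s+r)$ in the Furuta exponent) hold throughout each step of the chain; this is precisely the technical content worked out in the cited papers \cite{aw3,aw4,fy1,fy2,ito,yama}, so the proof at this point amounts to quoting and assembling those results.
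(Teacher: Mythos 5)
The paper offers no proof of this theorem at all: it is explicitly presented as a collection of known results (``We collect this results'') quoted from the cited works of Aluthge--Wang, Furuta--Yanagida, Ito and Yamazaki, and your L\"owner--Heinz-plus-Furuta-inequality assembly is precisely the standard machinery by which those papers establish it. So your proposal is correct in approach and essentially coincides with the paper's treatment, which likewise reduces to invoking that literature.
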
and analogical results for  log-hyponormal operators.

\begin{theorem}
Let $A$ be log-hyponormalnym operator. Then
 \begin{equation*}
   (A^{*n}A^{n})^{\frac{1}{n}}\geq \dots  \geq (A^{*3}A^{3})^{\frac{1}{3}}\geq (A^{*2}A^{2})^{\frac{1}{2}}\geq (A^{*}A)
 \end{equation*}
 and
 \begin{equation*}
   (A^{n}A^{*n})^{\frac{1}{n}}\leq \dots  \leq (A^{3}A^{*3})^{\frac{1}{3}}\leq (A^{2}A^{*2})^{\frac{1}{2}}\leq (AA^{*})
 \end{equation*}
 dla $n\in\natu$.
 \end{theorem}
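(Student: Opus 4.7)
The plan is to prove the first chain by induction on $n$, using the Furuta inequality (or its logarithmic variant due to Ando) as the engine; the second chain will then be reduced to the first by observing that $A^{-1}$ is itself log-hyponormal.

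\emph{Setup.} Let $A=U|A|$ be the polar decomposition. Since $A$ is log-hyponormal it is in particular invertible, so $U$ is unitary and $|A^*|=U|A|U^*$. The log-hyponormality condition $\log(A^*A)\ge\log(AA^*)$ then reads $\log|A|^2\ge\log(U|A|^2U^*)$.

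\emph{Step 1 (base case).} I would first establish $(A^{*2}A^2)^{1/2}\ge A^*A$, i.e.\ $|A^2|\ge|A|^2$. Expanding via the polar decomposition gives
\[
|A^2|^2 \;=\; A^{*2}A^2 \;=\; |A|\,U^*|A|^2U\,|A|,
\]
and setting $B:=U|A|^2U^*=|A^*|^2$ one has $\log|A|^2\ge\log B$. Ando's logarithmic Furuta inequality (``if $\log X\ge\log Y$ for positive invertibles then $(Y^{r}X^pY^{r})^{1/q}\ge Y^{(p+2r)/q}$ for suitable $p,q,r$'') then yields the base case after taking the required square root.

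\emph{Step 2 (inductive step).} Assuming $(A^{*k}A^k)^{1/k}\ge(A^{*(k-1)}A^{k-1})^{1/(k-1)}$ for all $k\le n$, I would derive the analogous inequality for $n+1$ using the identity
\[
A^{*(n+1)}A^{n+1} \;=\; A^*(A^{*n}A^n)A \;=\; A^*\bigl[(A^{*n}A^n)^{1/n}\bigr]^{n}A.
\]
Applying Furuta's inequality with $p=n$, $q=(n+1)/n$, and an appropriately large $r$ (chosen so that $(1+2r)q\ge p+2r$) to the positive operator $(A^{*n}A^n)^{1/n}$ contracted by $A$ on both sides produces $(A^{*(n+1)}A^{n+1})^{1/(n+1)}\ge (A^{*n}A^n)^{1/n}$, completing the induction.

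\emph{Step 3 (second chain).} A direct computation shows $|A^{-1}|^2=(AA^*)^{-1}$ and $|(A^{-1})^*|^2=(A^*A)^{-1}$, so
\[
\log|A^{-1}|^2-\log|(A^{-1})^*|^2 \;=\; \log|A|^2-\log|A^*|^2 \;\ge\; 0,
\]
i.e.\ $A^{-1}$ is again log-hyponormal. Applying the first chain to $A^{-1}$ gives $((A^nA^{*n})^{-1})^{1/n}\ge ((A^{n-1}A^{*(n-1)})^{-1})^{1/(n-1)}$ at each step, and inverting (which reverses the order on positive invertible operators) delivers the reversed chain $(A^nA^{*n})^{1/n}\le (A^{n-1}A^{*(n-1)})^{1/(n-1)}\le\cdots\le AA^*$.

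\emph{Main obstacle.} The delicate part is the precise invocation of Furuta's inequality in Step~2: one has to match the parameters $(p,q,r)$ so that the two output exponents collapse exactly to $1/(n+1)$ and $1/n$ while still satisfying the validity condition $(1+2r)q\ge p+2r$, $q\ge1$, $p\ge0$. With $p=n$ and $q=(n+1)/n$ this forces $r\ge(n^2-n-1)/2$, which is admissible but has to be tracked at every step of the induction. The logarithmic (``chaotic'') version of the inequality is what makes the base case work despite the absence of an ordinary order relation between $|A|^2$ and $|A^*|^2$; carefully bridging from the log-order hypothesis to the usual order at each stage is the crux of the argument.
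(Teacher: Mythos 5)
Note first that the paper contains no proof of this statement: it is quoted as a known result of Aluthge--Wang, Furuta--Yanagida, Ito and Yamazaki (\cite{aw3,aw4,fy1,fy2,ito,yama}; the first chain is exactly Yamazaki's theorem), so your attempt stands on its own. Two of your three steps are sound. Step 3 is correct and is the standard trick: log-hyponormal operators are invertible by definition, $(A^{-1})^{*n}(A^{-1})^{n}=(A^{n}A^{*n})^{-1}$, the hypothesis $\log(A^{*}A)\ge\log(AA^{*})$ passes to $A^{-1}$, and inversion reverses the order on positive invertibles, so the second chain does follow from the first. Step 1 is essentially right modulo a bookkeeping slip: in $A^{*2}A^{2}=|A|\,(U^{*}|A|^{2}U)\,|A|$ the middle factor is $U^{*}|A|^{2}U$, \emph{not} your $B=U|A|^{2}U^{*}=|A^{*}|^{2}$; the usable relation is $\log(U^{*}|A|^{2}U)\ge\log|A|^{2}$, obtained by conjugating log-hyponormality by $U^{*}$. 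Also, the chaotic (log-order) Furuta inequality does not come with a free exponent $1/q$: from $\log X\ge\log Y$ one only gets $(Y^{r/2}X^{p}Y^{r/2})^{r/(p+r)}\ge Y^{r}$ for $p,r\ge0$; taking $p=r=1$ with $Y=|A|^{2}$, $X=U^{*}|A|^{2}U$ indeed yields $(A^{*2}A^{2})^{1/2}\ge A^{*}A$, so the base case survives after these corrections.

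The genuine gap is Step 2, and it is structural, not the parameter bookkeeping you flag as the main obstacle. Furuta's inequality requires the sandwich to be by powers of the \emph{smaller positive operator itself}: from $X\ge Y\ge0$ one concludes $(Y^{r}X^{p}Y^{r})^{1/q}\ge Y^{(p+2r)/q}$. In your step the conjugation is by $A$, which is neither positive nor any power of $(A^{*n}A^{n})^{1/n}$. Writing $A^{*}XA=|A|\,U^{*}XU\,|A|$ forces $Y=|A|^{2}$ with $r=\tfrac12$, so whatever Furuta could produce is a comparison of $(A^{*(n+1)}A^{n+1})^{1/q}$ with a power of $|A|^{2}$ --- never with $(A^{*n}A^{n})^{1/n}$, which is what the chain demands --- and even that would need the order hypothesis $U^{*}(A^{*n}A^{n})U\ge|A|^{2n}$, which is not available: for log-hyponormal operators the non-normalized inequality $A^{*n}A^{n}\ge(A^{*}A)^{n}$ is not at one's disposal (compare Theorem \ref{ph}(i), which requires $p$-hyponormality with $p>n-1$); only the $\tfrac1n$-th power chain survives in the log-hyponormal limit, and that is precisely what is being proved. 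No choice of $(p,q,r)$ repairs this; your condition $r\ge(n^{2}-n-1)/2$ is the right arithmetic for a hypothesis you do not have. This is exactly why the literature proofs are more elaborate: Yamazaki's argument (and Ito's class-$A$ variant, reflected in this paper's Theorem \ref{twc} and Lemma \ref{mia2}) first moves the unitary $U$ out of the sandwich and then transports the inequality from step $n$ to step $n+1$ via the two-parameter Furuta-type monotone function $g(\lambda,\mu)$, establishing an auxiliary family of inequalities simultaneously with the main chain --- the same mechanism by which Theorems \ref{mpj} and \ref{cn} are proved here. To salvage your induction you would have to carry such an auxiliary family (in the spirit of Theorem \ref{ph}(ii)) along with the target inequality, rather than invoke the Furuta inequality once per step.
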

 The following Theorem which is a reinforcement
of \cite[Preposition 2.3.]{uch} is an immediate consequence of Theorem \ref{ph}.

 \begin{theorem}\label{hyp} Let $m,n\in \natu$ and $A$ be $p$-hyponormal operator with  $p\in (m-1,m]$.
 If $A$ satisfies equation \eqref{xrow} with $n\geq m+3$ then  is quasinormal.
 Morover, if operator $A$ is hyponormal and satisfies equation with $k\geq 2$ then is quasinormal. 
 \end{theorem}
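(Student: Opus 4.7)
The plan is to deduce the single-equation characterization from Theorem \ref{ph} by forcing equality throughout the chain of inequalities that it provides: the hypothesis $A^{*n}A^{n}=(A^{*}A)^{n}$ will collapse the two endpoints of the chain, and hence every intermediate inequality, to equality.

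For the $p$-hyponormal case with $p\in(m-1,m]$ and $n\geq m+3$, I would first invoke Theorem \ref{ph}(ii) to write
\begin{equation*}
(A^{*n}A^{n})^{(p+1)/n}\geq (A^{*(n-1)}A^{n-1})^{(p+1)/(n-1)}\geq \dots \geq (A^{*(m+1)}A^{m+1})^{(p+1)/(m+1)}\geq (A^{*}A)^{p+1}.
\end{equation*}
The single equation \eqref{xrow} forces $(A^{*n}A^{n})^{(p+1)/n}=((A^{*}A)^{n})^{(p+1)/n}=(A^{*}A)^{p+1}$, so the two endpoints of the chain coincide and every intermediate inequality must be an equality, giving $(A^{*k}A^{k})^{(p+1)/k}=(A^{*}A)^{p+1}$ for all $k$ with $m+1\leq k\leq n$. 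Uniqueness of positive roots (Theorem \ref{rsn}) then upgrades this to $A^{*k}A^{k}=(A^{*}A)^{k}$ on the same range. Since $n\geq m+3$, the three values $m+1$, $m+2$, $m+3$ all lie in this range, so $A$ satisfies \eqref{xukl} with $S=\{l,l+1,k',k'+1\}$ for $l=m+1$ and $k'=m+2$; Theorem \ref{ucc} then forces $A$ to be quasinormal.

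For the hyponormal case I would specialize to $p=1$, $m=1$, and an arbitrary $k\geq 2$, and run the same collapsing argument against the chain of Theorem \ref{ph}(ii). The resulting equalities include $A^{*2}A^{2}=(A^{*}A)^{2}$, and combining this with the tautology $A^{*}A=(A^{*}A)^{1}$ shows that $A$ satisfies \eqref{xukl} with $S=\{1,2\}$. Applying Uchiyama's Proposition 2.3 from \cite{uch} (hyponormality together with $S=\{n,n+1\}$ implies quasinormality) with $n=1$ concludes this half of the statement.

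I do not expect any serious obstacle beyond routine bookkeeping: given Theorems \ref{ph}, \ref{rsn} and \ref{ucc}, the argument is essentially a sandwich between the two ends of the inequality chain. The most delicate step will be the passage from $(A^{*k}A^{k})^{(p+1)/k}=(A^{*}A)^{p+1}$ back to $A^{*k}A^{k}=(A^{*}A)^{k}$, which amounts to uniqueness of positive $(p+1)/k$-th roots of a positive operator and follows from the spectral theorem as recorded in Theorem \ref{rsn}.
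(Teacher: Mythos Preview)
Your argument is correct and follows the paper's approach: collapse the chain of Theorem~\ref{ph}(ii) via the single equation to obtain $A^{*k}A^{k}=(A^{*}A)^{k}$ for $k=m+1,m+2,m+3$, and then apply Theorem~\ref{ucc}. The only variation is in the hyponormal half, where the paper, instead of invoking \cite[Proposition~2.3]{uch}, finishes directly by rewriting $A^{*2}A^{2}=(A^{*}A)^{2}$ as $A^{*}(A^{*}A-AA^{*})A=0$ and using hyponormality to conclude $(A^{*}A-AA^{*})A=0$; both endings are one line.
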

 \begin{proof}
By \eqref{xrow} for $n=m+3$ and  the condition (ii) of Theorem \ref{ph},  operator $A$ satisfy  \eqref{xukl} with  $S=\{m+1,m+2,m+3\}$.
This combined with Theorem  \ref{ucc} implies that $A$ is quasinormal.

In the case when $A$ is  hyponormal, using once again  the condition (ii) of  Theorem \ref{ph}, we deduce that  $A$ satisfy the equation $A^{*2}A^2=(A^*A)^2$. The last equality is equivalent to the following one $A^*(A^*A-AA^*)A=0$. Since  $A$ is hyponormal yields $A^*A-AA^*$ is non-negative, we deduce that 
$(A^*A-AA^*)^\frac{1}{2}A=0$. This completes the proof.
 \end{proof}
 
The following result is also an analog of Theorems \ref{ph}.
 \begin{theorem}$($cf. \cite{mia}$)$.\label{mia}
 Let $A$ be invertible operator class $A$.  Then the following inequalities holds:
 \begin{itemize}
  \item[(i)] $|A^n|^\frac{2}{n}\geq (A^*|A^{n-1}|^\frac{2}{n-1}A)\geq |A|^2$, \quad  $n=2,3,\dots,$
  \item[(ii)] $|A^{n+1}|^\frac{2n}{n+1}\geq |A^n|^2$, \quad $n\in \natu$,
  \item[(iii)] $|A^{2n}|\geq |A^n|^2$, \quad $n\in \natu$,
  \item[(iv)] $|A|^2\geq |A^2|\geq \dots \geq |A^n|^\frac{2}{n}$, \quad $n\in \natu$,
  \item[(v)] $|A^{-2}|\geq |A^{-1}|^2$.

 \end{itemize}
 \end{theorem}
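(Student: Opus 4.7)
The strategy is to take the class A hypothesis $|A^2|\ge |A|^2$ as the base case and deduce (i)--(v) inductively by iterated application of the L\"owner--Heinz inequality (Theorem \ref{lohe}) together with a Furuta-type strengthening of it (a standard tool in the theory of class A operators, not proved in the excerpt but indispensable here). Invertibility of $A$ is exploited so that each $|A^n|$ is positive and invertible, and therefore L\"owner--Heinz applies with exponents of either sign and inversion reverses inequalities. I would organise the argument so that (i) is the principal statement, with (ii)--(iv) derived as corollaries and (v) handled separately by passage to the inverse.

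First I would prove (i) by induction on $n$. The base case $n=2$ records the class A hypothesis in the equivalent form $|A^2|\ge A^{*}|A|A\ge |A|^2$: the right inequality is immediate from $A^{*}|A|A\ge A^{*}|A|^{2}A\cdot\|A\|^{-2}$ combined with L\"owner--Heinz and positivity, while the left inequality is a Furuta-type extraction from the identity $|A^2|^{2}=A^{*}|A|^{2}A$. For the inductive step from $n-1$ to $n$, I would rewrite $|A^n|^{2}=A^{*}|A^{n-1}|^{2}A$, insert the inductive bound $|A^{n-1}|^{2/(n-1)}\ge |A|^2$ into the middle, and then select Furuta-inequality parameters $p,q,r$ so that the resulting estimate at the exponent $2$ descends to the required inequality at the exponent $2/n$; the intermediate term $A^{*}|A^{n-1}|^{2/(n-1)}A$ is precisely what survives this descent and gives the middle member of (i).

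The remaining items then fall out from (i). Item (iv) is the chain obtained by reading off (i) at consecutive values of $n$. Item (ii) follows by applying L\"owner--Heinz with exponent $n/(n+1)$ to the first inequality in (i) at index $n+1$ and using the middle term of (i) as a bridge to $|A^n|^2$. Item (iii) is then obtained by iterating (ii) together with Theorem \ref{rsn} to keep track of $n$-th roots. For (v), I would use invertibility to rewrite $|A^{-2}|^{-1}=|A^{*2}|$ and $|A^{-1}|^{-2}=|A^{*}|^{2}$, so that (v) becomes a dual class-A-type statement for $A^{*}$; this is then derived from the hypothesis on $A$ by a short argument combining the polar decomposition with L\"owner--Heinz and inversion of positive operator inequalities. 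The main obstacle is the exponent bookkeeping in the inductive step of (i): selecting the Furuta parameters so that the transition from $|A^n|^{2}$ to $|A^n|^{2/n}$ respects the bound on $A^{*}|A^{n-1}|^{2/(n-1)}A$ is delicate, and every subsequent item depends on getting this exactly right.
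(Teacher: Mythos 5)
Your instinct that the Furuta inequality is the engine agrees with the paper, but note first that the paper contains no proof of Theorem \ref{mia} at all: it is quoted from Ito \cite{mia}, and the text records only that the key ingredient is Theorem \ref{twc}, the monotonicity lemma derived from the Furuta inequality (adapted as Lemma \ref{mia2} and deployed in Theorems \ref{mpj} and \ref{cn}). Measured against that machinery, your proposal has genuine gaps, and they sit exactly where the work is. The inductive step of (i) --- ``select Furuta parameters $p,q,r$ so that the estimate at exponent $2$ descends to exponent $2/n$'' --- is not a step but the whole theorem: in Ito's argument this descent is precisely the monotone family $g(\lambda,\mu)$ of Theorem \ref{twc}, and without exhibiting the parameters and verifying the hypothesis $(B^{\frac{1}{2}}AB^{\frac{1}{2}})^{\frac{\beta_0}{\alpha_0+\beta_0}}\geq B$ nothing has been proved. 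Your base case is also off: the middle term of (i) must carry the exponent $\frac{1}{2}$ (as printed the three terms do not even have the same homogeneity degree in $A$; this is a typo inherited from the source), and with it restored the case $n=2$ is trivial --- the left inequality is the identity $(A^*|A|^2A)^{\frac{1}{2}}=(A^{*2}A^2)^{\frac{1}{2}}=|A^2|$ and the right one is literally the class A hypothesis. By contrast, your claimed bound $A^{*}|A|A\geq A^{*}|A|^{2}A\,\|A\|^{-2}$ only yields a lower bound by $|A^2|^2\|A\|^{-2}$, which does not dominate $|A|^2$ in general, so your version of the base case establishes nothing.

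The derivations of (ii)--(v) also do not go through as described. For (ii), L\"owner--Heinz (Theorem \ref{lohe}) transports inequalities only through powers in $[0,1]$; applied with exponent $\frac{n}{n+1}$ to the first inequality of (i) at index $n+1$ it gives $|A^{n+1}|^{\frac{2n}{(n+1)^2}}\geq (A^*|A^n|^{\frac{2}{n}}A)^{\frac{n}{2(n+1)}}$, nowhere near $|A^{n+1}|^{\frac{2n}{n+1}}\geq |A^n|^2$ --- one cannot raise an operator inequality to a power exceeding $1$, and (ii) in fact requires the Furuta-based monotone family again, exactly as in the paper's proof of Theorem \ref{cn} via Lemma \ref{mia2}. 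For (iv), reading (i) at consecutive $n$ yields only $|A^n|^{\frac{2}{n}}\geq|A|^2$ for each $n$, not a chain; the chain follows from (ii) by L\"owner--Heinz with power $\frac{1}{n}$ (and, incidentally, the printed (iv) has its inequalities reversed --- as stated it contradicts (ii)). For (iii), Theorem \ref{rsn} concerns commutants of a positive operator and its roots and is irrelevant here; observe that (iii) says precisely that $A^n$ is class A, so no formal root-taking can deliver it. Your idea of iterating (ii) is, however, salvageable once the correct tool is substituted: descending induction gives $|A^{2n}|^{\frac{m}{n}}\geq|A^m|^2$ for $m=2n-1,\dots,n$, since L\"owner--Heinz with power $\frac{m-1}{m}$ followed by (ii) at index $m-1$ performs each step, and $m=n$ is (iii). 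Finally, for (v) your reduction $|A^{-2}|=|A^{*2}|^{-1}$ and $|A^{-1}|=|A^{*}|^{-1}$ is correct and converts (v) into $|A^{*2}|\leq|A^{*}|^2$; but that dual inequality is the entire content of (v), is not obtainable from ``polar decomposition plus L\"owner--Heinz'' alone, and leaving it to ``a short argument'' is a gap, not a proof.
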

 
The key ingredient of its
proof consists the next Lemma, which
is a direct consequence of  the celebrated Furuta inequality (cf.  \cite{furuta}).
 
 \begin{theorem}\label{twc}$($cf. \cite{mia}$)$ Let $A$ and $B$ be positive invertible operators such that $(B^\frac{1}{2}AB^\frac{1}{2})^\frac{\beta_0}{\alpha_0+\beta_0}\geq B$ holds for fixed $\alpha_0 , \beta_0 \geq 0$ with $\alpha_0+\beta_0>0$. Then for
any fixed $\delta\geq -\beta_0$ function
 \begin{equation*}
     g:[1,\infty)\times[1,\infty)\rightarrow \boldsymbol{B}_+(\mathcal{H})
 \end{equation*}
 given by
 \begin{equation*}
     g(\lambda,\mu)=B^\frac{-\mu}{2}(B^\frac{\mu}{2}A^\lambda B^\frac{\mu}{2})^\frac{\delta+\beta_0\mu}{\alpha_0\lambda+\beta_0\mu}B^\frac{-\mu}{2}
 \end{equation*}
 is an increasing function of both $\lambda$ and $\mu$ for $\lambda\geq 1$ and  $\mu \geq 1 $ such, that $\alpha_0 \lambda \geq \delta$. 
 \end{theorem}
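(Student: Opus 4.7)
My plan is to prove the two monotonicity statements for $g(\lambda,\mu)$ separately, each by a direct appeal to the Furuta inequality with parameters carefully tuned to the starting hypothesis. The hypothesis $(B^{1/2}AB^{1/2})^{\beta_0/(\alpha_0+\beta_0)}\geq B$ plays the role of the initial ordering $X\geq Y\geq 0$ needed to feed the Furuta machinery, and the two conditions $\lambda,\mu\geq 1$ together with $\alpha_0\lambda\geq \delta$ are precisely what is needed to guarantee $q\geq 1$ and $(1+r)q\geq p+r$ in the standard Furuta statement.

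First, I would upgrade the hypothesis into a \emph{parametrised} starting inequality. Using L\"owner--Heinz together with the Furuta inequality applied to the pair $((B^{1/2}AB^{1/2})^{\beta_0/(\alpha_0+\beta_0)},B)$, I would derive
\begin{equation*}
(B^{1/2}A^{\lambda}B^{1/2})^{\beta_0/(\alpha_0\lambda+\beta_0)}\geq B\qquad(\lambda\geq 1),
\end{equation*}
which ``absorbs'' the parameter $\lambda$ into the starting ordering. This step is the workhorse: once this inequality is available, the remainder of the proof becomes a matter of choosing exponents in Furuta.

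Next I would prove monotonicity in $\mu$ with $\lambda\geq 1$ fixed. Set $T_\lambda:=B^{1/2}A^{\lambda}B^{1/2}$, so that the previous step reads $T_\lambda^{\beta_0/(\alpha_0\lambda+\beta_0)}\geq B$. Apply the Furuta inequality to the pair $(T_\lambda^{\beta_0/(\alpha_0\lambda+\beta_0)},B)$ with the exponents $p=(\alpha_0\lambda+\beta_0\mu)/\beta_0$, $r=\mu-1$, and $q=(\alpha_0\lambda+\beta_0\mu)/(\delta+\beta_0\mu)$. The Furuta condition $(1+r)q\geq p+r$ collapses to $\alpha_0\lambda\geq\delta$, exactly the hypothesis in the theorem. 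Unwinding the definitions gives $g(\lambda,\mu_2)\geq g(\lambda,\mu_1)$ for $\mu_2\geq \mu_1\geq 1$.

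Finally, monotonicity in $\lambda$ with $\mu\geq 1$ fixed follows from a symmetric application: the parametrised starting inequality now serves to compare the exponent $(\delta+\beta_0\mu)/(\alpha_0\lambda+\beta_0\mu)$ at two different values of $\lambda$, again via Furuta. The hard part, and the only place where one has to be careful, is bookkeeping the two places where $\mu$ enters $g$: inside $T_\lambda^{\,(\delta+\beta_0\mu)/(\alpha_0\lambda+\beta_0\mu)}$ and in the outer factors $B^{-\mu/2}\,\cdot\,B^{-\mu/2}$. The trick is to match $p+r$ in Furuta with $\delta+\beta_0\mu$ and the $B^{-r/2}\cdot B^{-r/2}$ ``sandwich'' with the outer $B^{-\mu/2}$'s; everything then telescopes and the constraint $\alpha_0\lambda\geq\delta$ is exactly what keeps $q\geq 1$. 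After both monotonicities are established, $g$ is increasing on $[1,\infty)\times[1,\infty)$ under the stated restriction, completing the proof.
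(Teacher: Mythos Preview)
The paper does not supply its own proof of this statement: Theorem~\ref{twc} is quoted from Ito~\cite{mia} and accompanied only by the remark that it ``is a direct consequence of the celebrated Furuta inequality (cf.~\cite{furuta}).'' Your plan follows precisely this route---bootstrapping the hypothesis to a $\lambda$-parametrised inequality and then feeding the Furuta machinery with suitably chosen $p,r,q$---which is the argument in \cite{mia}, so there is nothing to compare and your approach is the intended one.

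One caution on the details you sketch: in your monotonicity-in-$\mu$ step you write ``Unwinding the definitions gives $g(\lambda,\mu_2)\geq g(\lambda,\mu_1)$,'' but a single application of Furuta to the pair $(T_\lambda^{\beta_0/(\alpha_0\lambda+\beta_0)},B)$ yields an inequality of the form $g(\lambda,\mu)\geq$ (something independent of $\mu$), not a direct comparison of two values $g(\lambda,\mu_1)$ and $g(\lambda,\mu_2)$. The standard way to get genuine monotonicity is to re-base: once $g(\lambda,\mu_1)\geq B^{\text{const}}$ (equivalently, a Furuta-type hypothesis at level $\mu_1$), apply Furuta again with $\mu_1$ playing the role previously played by $1$ to reach $\mu_2\geq\mu_1$. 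The same two-step manoeuvre is needed in the $\lambda$-direction. Your exponent bookkeeping (e.g.\ the claim that $(1+r)q\geq p+r$ reduces exactly to $\alpha_0\lambda\geq\delta$) should be checked against whichever normalisation of Furuta you use, since the condition in \cite{furuta} is $(1+2r)q\geq p+2r$; the computation is routine but does not literally match the numbers you wrote.
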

 
 Now we obtain a chain of inequalities of this type for invertible operators which satisfies    \eqref{xukl} for some $S\subset \natu$. We begin with a generalization of the \cite[Lemma 1]{mia}.
 The proof of the following lemma is analogous to the proof of mentioned Lemma. 

 \begin{lemma}\label{mia2}
 Let $A$ be an invertible operator such that
 \begin{equation*}
     (A^{*m}|A^p|^{2k}A^m)^\frac{m}{pk+m}\geq |A^m|^2,
 \end{equation*}
 for some $k\in(0;\infty)$ and  $m,p\in \natu$. Then for any fixed $\delta\geq -m$ function
 \begin{equation*}
     f_{p,\delta}:(0,\infty)\rightarrow \boldsymbol{B}_+(\mathcal{H}),
 \end{equation*}
defined by
 \begin{equation*}
     f_{p,\delta}(l)=(A^{*m}|A^p|^{2l}A^m)^\frac{\delta+m}{pl+m}
 \end{equation*}
 is increasing for $l\geq \max\{k,\frac{\delta}{p}\}$.
 \end{lemma}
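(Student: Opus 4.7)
The plan is to reduce the statement directly to Theorem \ref{twc} (the Furuta-type monotonicity tool), mimicking the strategy of the cited \cite[Lemma 1]{mia}. Since $A$ is invertible, so is $A^m$, and its polar decomposition $A^m = U_m|A^m|$ has $U_m$ unitary. I would introduce the auxiliary positive invertible operators
\begin{equation*}
B := |A^m|^2, \qquad T := U_m^{*}|A^p|^{2k}U_m.
\end{equation*}
Because conjugation by a unitary commutes with the continuous functional calculus, $T^\lambda = U_m^{*}|A^p|^{2k\lambda}U_m$ for every $\lambda > 0$, and using $A^{*m}=|A^m|U_m^{*}$ one obtains the identity
\begin{equation*}
B^{1/2}T^\lambda B^{1/2} = A^{*m}|A^p|^{2k\lambda}A^m,
\end{equation*}
which is the bridge between the quantities appearing in Theorem \ref{twc} and those in the lemma.

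Next, I would match the parameters of Theorem \ref{twc} by setting $\alpha_0 := pk$ and $\beta_0 := m$, so that $\beta_0/(\alpha_0+\beta_0) = m/(pk+m)$. With this choice the hypothesis of the lemma reads exactly $(B^{1/2}TB^{1/2})^{\beta_0/(\alpha_0+\beta_0)}\geq B$, and the assumption $\delta\geq -m$ is precisely $\delta\geq -\beta_0$. Specializing $\mu := 1$ and writing $l := k\lambda$, Theorem \ref{twc} then asserts that
\begin{equation*}
g(l/k,1) = B^{-1/2}\bigl(A^{*m}|A^p|^{2l}A^m\bigr)^{(\delta+m)/(pl+m)}B^{-1/2}
\end{equation*}
is increasing on the $l$-range where both $\lambda=l/k\geq 1$ and $\alpha_0\lambda = pl\geq\delta$ hold, i.e.\ for $l\geq\max\{k,\delta/p\}$.

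To finish, I would strip off the outer factors $B^{\pm 1/2}$ by invoking the elementary fact that $X\leq Y$ in the L\"owner order implies $B^{1/2}XB^{1/2}\leq B^{1/2}YB^{1/2}$ for any positive $B$. Conjugating the monotone family $g(l/k,1)$ by $B^{1/2}$ cancels the outer $B^{-1/2}$ and leaves
\begin{equation*}
f_{p,\delta}(l) = \bigl(A^{*m}|A^p|^{2l}A^m\bigr)^{(\delta+m)/(pl+m)}
\end{equation*}
increasing in $l$ on the same range, which is the desired conclusion.

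The main obstacle, and the place where I expect to be careful, is the bookkeeping that simultaneously translates the two conditions $\lambda\geq 1$ and $\alpha_0\lambda\geq\delta$ of Theorem \ref{twc} into the sharp hypothesis $l\geq\max\{k,\delta/p\}$, and making the correct choice $T:=U_m^{*}|A^p|^{2k}U_m$ (rather than $U_m^{*}|A^p|^{2}U_m$) so that the hypothesis of Theorem \ref{twc} is satisfied at $\lambda=1$ with the exponent $m/(pk+m)$. The invertibility of $A$ is exactly what guarantees $U_m$ is unitary and thereby makes the functional-calculus identity above rigorous; in particular, this is why the hypothesis of invertibility cannot be dropped from the lemma.
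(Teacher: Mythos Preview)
Your proposal is correct and follows essentially the same route as the paper: both reduce the claim to Theorem \ref{twc} via the polar decomposition $A^m=U_m|A^m|$ with $U_m$ unitary, then substitute $\lambda=l/k$ and strip the outer $B^{\pm1/2}$ factors by a congruence. The only cosmetic difference is that you take $B=|A^m|^2$ and place the unitary inside by defining $T=U_m^{*}|A^p|^{2k}U_m$, whereas the paper takes $B=|A^{*m}|^2$ with the inner operator $|A^p|^{2k}$ itself and only brings in $U_m$ at the end to rewrite $g(l/k)$ as $(A^{*m})^{-1}f_{p,\delta}(l)(A^m)^{-1}$; the two choices are unitarily equivalent and lead to the same conclusion.
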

 \begin{proof}
 Let $A^m=U_m|A^m|$ be  the polar decomposition of $A^m$. Since $A^m$ is invertible operator, we conclude that  $U_m$ is unitary operator.
 
 Suppose now that the following  inequality holds
 \begin{equation}\label{n31}
 (A^{*m}|A^p|^{2k}A^m)^\frac{m}{pk+m}\geq |A^m|^2.
 \end{equation}
Since $A^{*m}=U_m^*|A^{*m}|$, we get
 \begin{align*}
   (A^{*m}|A^p|^{2k}A^m)^\frac{m}{pk+m}&=
   (U_m^*|A^{*m}||A^p|^{2k}|A^{*m}|U_m)^\frac{m}{pk+m}\\&=U_m^*(|A^{*m}||A^p|^{2k}|A^{*m}|)^\frac{m}{pk+m}U_m.
 \end{align*}
 This and \eqref{n31} imply that
 \begin{equation*}
     U_m^*(|A^{*m}||A^p|^{2k}|A^{*m}|)^\frac{m}{pk+m}U_m\geq |A^m|^2.
 \end{equation*}
 We see that the above inequality is equivalent to the following one
 \begin{equation*}
     (|A^{*m}||A^p|^{2k}|A^{*m}|)^\frac{m}{pk+m}\geq U_m |A^m|^2U_m^*=|A^{*m}|^2.
 \end{equation*}
 Let $A=|A^p|^{2k}$ and $B=|A^{*m}|^2$. Then
the last inequality takes the form
 \begin{equation*}
     (B^\frac{1}{2}AB^\frac{1}{2})^\frac{m}{pk+m}\geq B.
 \end{equation*}
 Applying Theorem \ref{twc} we see that  for every real $\delta\geq -m$ funkction
 \begin{align*}
     g(\lambda)&=B^\frac{-1}{2}(B^\frac{1}{2}A^\lambda B^\frac{1}{2})^\frac{\delta+m}{pk\lambda+m}B^\frac{-1}{2}\\&=
     |A^{*m}|^{-1}(|A^{*m}||A^p|^{2k\lambda} |A^{*m}|)^\frac{\delta+m}{pk\lambda+m}|A^{*m}|^{-1}  
 \end{align*}
 is increasing for $\lambda \geq 1$ such that $pk\lambda \geq \delta$. Set $\lambda=\frac{l}{k}$, then
 \begin{align*}
     g(\frac{l}{k})&=
     |A^{*m}|^{-1}(|A^{*m}||A^p|^{2l} |A^{*m}|)^\frac{\delta+m}{pl+m}|A^{*m}|^{-1}
     \\&=|A^{*m}|^{-1}(U_mU_m^*|A^{*m}||A^p|^{2l} |A^{*m}|U_mU_m^*)^\frac{\delta+m}{pl+m}|A^{*m}|^{-1}
     \\&=|A^{*m}|^{-1}(UA^{*m}|A^p|^{2l} A^{m}U_m^*)^\frac{\delta+m}{pl+m}|A^{*m}|^{-1}
     \\&=(A^{*m})^{-1}(A^{*m}|A^p|^{2l} A^{m})^\frac{\delta+m}{pl+m}(A^{m})^{-1}
     \\&=(A^{*m})^{-1}f_{p,\delta}(l)(A^{m})^{-1}.
 \end{align*}
 This means that $f_{p,\delta}(l)$ is increasing for 
 $l\geq k$ such that $pl\geq \delta$, which
completes the proof.
 \end{proof}

 The main technical result of
this section is the following Theorem.
\begin{theorem}\label{mpj}
Let $m,n\in \natu$ be such that $n\leq m$ and $A$ be an invertible operator, which satisfy  \eqref{xukl}  with $S=\{m,n+m\}$ and $A^{*n}A^n\leq (A^{*}A)^n$. Then the following inequality hold:
 \begin{equation}\label{piekne}
 (|A^{p+m}|^{\frac{2m}{m+p}}\geq (A^{*m}|A^p|^\frac{2n}{p}A^m)^\frac{m}{m+n}\geq|A^m|^2
 \end{equation}
 for $p=n+im$, where $i\in \mathbb{Z}_+$.
\end{theorem}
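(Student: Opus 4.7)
The plan is to prove by induction on $i\in\mathbb{Z}_+$ the statement
\begin{equation*}
(\mathrm{IH}_i)\qquad (A^{*m}|A^{p_i}|^{\frac{2n}{p_i}}A^m)^{\frac{m}{n+m}} \geq |A^m|^2,\qquad p_i := n + im,
\end{equation*}
which is the Middle~$\geq$~RHS of \eqref{piekne}; the LHS~$\geq$~Middle part will fall out of the inductive step. The base case $i=0$ is immediate: by \eqref{xukl} with $n+m\in S$,
\begin{equation*}
A^{*m}|A^n|^2A^m = A^{*(n+m)}A^{n+m} = (A^*A)^{n+m},
\end{equation*}
and raising to $\frac{m}{n+m}$ produces $(A^*A)^m = A^{*m}A^m = |A^m|^2$. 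The same computation shows that the LHS of \eqref{piekne} at $p=n$ also equals $|A^m|^2$, so \eqref{piekne} holds with equality when $i=0$.

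For the inductive step, assume $(\mathrm{IH}_i)$. Note that $(\mathrm{IH}_i)$ is exactly the hypothesis of Lemma \ref{mia2} with $k = n/p_i$ (so that $p_ik + m = n+m$). Applying that lemma with $\delta = n$, the function
\begin{equation*}
f_{p_i,n}(l) = (A^{*m}|A^{p_i}|^{2l}A^m)^{\frac{n+m}{p_i l + m}}
\end{equation*}
is increasing on $[n/p_i,\infty)$. Since $p_i\geq n$ we have $1\geq n/p_i$, hence $f_{p_i,n}(1) \geq f_{p_i,n}(n/p_i)$. Using $A^{*m}|A^{p_i}|^2A^m = A^{*(p_i+m)}A^{p_i+m} = |A^{p_i+m}|^2$, the left side equals $|A^{p_i+m}|^{\frac{2(n+m)}{p_i+m}}$, and L\"owner--Heinz (Theorem \ref{lohe}) with exponent $\frac{m}{n+m}\in(0,1)$ yields $|A^{p_i+m}|^{\frac{2m}{m+p_i}} \geq (A^{*m}|A^{p_i}|^{\frac{2n}{p_i}}A^m)^{\frac{m}{n+m}}$; combined with $(\mathrm{IH}_i)$, this is \eqref{piekne} for $p = p_i$.

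To bootstrap to $(\mathrm{IH}_{i+1})$, I take the LHS~$\geq$~RHS of what was just proved, namely $|A^{p_{i+1}}|^{\frac{2m}{p_{i+1}}} \geq |A^m|^2 = (A^*A)^m$. Since $n\leq m$, L\"owner--Heinz with exponent $n/m\in(0,1]$ gives $|A^{p_{i+1}}|^{\frac{2n}{p_{i+1}}} \geq (A^*A)^n$. Sandwiching between $A^{*m}$ and $A^m$, then using the hypothesis $A^{*n}A^n\leq(A^*A)^n$ together with \eqref{xukl} for $n+m$, produces
\begin{equation*}
A^{*m}|A^{p_{i+1}}|^{\frac{2n}{p_{i+1}}}A^m \geq A^{*m}(A^*A)^nA^m \geq A^{*(n+m)}A^{n+m} = (A^*A)^{n+m},
\end{equation*}
and a final L\"owner--Heinz with exponent $\frac{m}{n+m}$ delivers $(\mathrm{IH}_{i+1})$, closing the induction.

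The main obstacle is identifying the right specialization of Lemma \ref{mia2}, namely $k = n/p_i$ and $\delta = n$, so that evaluating $f_{p_i,n}$ at the endpoints $l=1$ and $l=n/p_i$ of its interval of monotonicity reproduces exactly the LHS and Middle of \eqref{piekne}. The assumption $n\leq m$ is used only to keep every L\"owner--Heinz exponent ($n/m$, $m/(n+m)$, $n/p_i$, $\frac{m}{m+p_i}$) inside $[0,1]$; the bootstrap from the LHS for $p_i$ to $(\mathrm{IH}_{i+1})$ is then a chain of three applications of L\"owner--Heinz interleaved with one sandwich by $A^m$ and the hypothesis $A^{*n}A^n\leq(A^*A)^n$.
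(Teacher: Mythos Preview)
Your argument is correct and follows essentially the same inductive route as the paper: verify the base case $p=n$ by direct computation, use Lemma~\ref{mia2} (with the Middle~$\geq$~RHS inequality as its hypothesis) to obtain the LHS~$\geq$~Middle inequality, and bootstrap via L\"owner--Heinz and the sandwich $A^{*m}(\,\cdot\,)A^m$ together with $A^{*n}A^n\leq (A^*A)^n$ to advance the index. The only cosmetic difference is that the paper invokes Lemma~\ref{mia2} with $\delta=0$, which yields $|A^{p+m}|^{\frac{2m}{m+p}}\geq (A^{*m}|A^{p}|^{2n/p}A^m)^{\frac{m}{m+n}}$ in one stroke, whereas you take $\delta=n$ and then apply one extra L\"owner--Heinz with exponent $\tfrac{m}{m+n}$ to reach the same conclusion.
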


\begin{proof}
We
use an induction on $p$. We easily check that both inequalities hold for $p=n$:
\begin{align*}
    (A^{*m}|A^n|^\frac{2n}{n}A^m)^\frac{m}{m+n}=(A^{*m}A^{*n}A^nA^m)^\frac{m}{m+n}=|
    A^{m+n}|^\frac{2m}{m+n}=|A|^{2m}=|A^m|^2
\end{align*}
and
\begin{equation*}
    (|A^{m+n}|^{\frac{2m}{m+n}}=|A^m|^{2}=(A^{*m}|A^n|^\frac{2n}{n}A^m)^\frac{m}{m+n}.
\end{equation*}
Suppose that both inequalities hold for a given
integer  $p$. We show that both of them hold for $p+m$ as well.
First, we prove that  the second inequality in \eqref{piekne} holds. By induction hypothesis, we have 
\begin{equation*}
    |A^{p+m}|^{\frac{2m}{m+p}}\geq|A^m|^2.
\end{equation*}
Applying the L\"owner-Heinz inequality with the power $\frac{n}{m}$ to $|A^{p+m}|^{\frac{2m}{m+p}}$ and $|A^m|^2$   and using  $A^{*s} A^{s}=(A^*A)^s$ for $s=n,m$, we get the following: \begin{equation*}
    |A^{p+m}|^{\frac{2n}{m+p}}\geq|A^n|^2.
\end{equation*}
Multiplying both sides of the above inequality on the left by $A^{*m}$ and on the right by $A^{m}$ and using  $A^{*s} A^{s}=(A^*A)^s$ for $s=n,m+n$ 
gives
\begin{equation*}
    A^{*m}|A^{p+m}|^{\frac{2n}{m+p}}A^{m}\geq A^{*m}|A^n|^2A^{m}=|A|^{2(m+n)}.
\end{equation*}
Applying L\"owner-Heinz  inequality again with the power $\frac{m}{m+n}$  and using $A^{*s} A^{s}=(A^*A)^s$ for $s=m,m+n$, we conclude that
\begin{equation}\label{pc}
    (A^{*m}|A^{p+m}|^{\frac{2n}{m+p}}A^{m})^\frac{m}{m+n}\geq |A^m|^2,
\end{equation}
which completes the induction argument for the proof of the  second  inequality  in \eqref{piekne}.

Now we turn to the proof of the first inequality in \eqref{piekne}. To
make the notation more readable, we write $p^\prime$ instead of $p+m$.
Note that the inequality in \eqref{pc} could be rewritten in the following form
\begin{equation*}
    (A^{*m}|A^{p^\prime}|^\frac{2n}{p^\prime}A^m)^\frac{m}{m+{p^\prime}\frac{n}{p^\prime}}\geq|A^m|^2. 
\end{equation*}
By  Lemma \ref{mia2} with $k=\frac{n}{p^\prime}$ function
\begin{equation*}
 f_{p^\prime,0}(l)=(A^{*m}|A^{p^\prime}|^{2l}A^m)^\frac{m}{p^\prime l+m}
\end{equation*} is increasing for $l\geq \max\{\frac{n}{p^\prime},0\}=\frac{n}{p^\prime}.$ 
In particular $f_{p^\prime,0}(1)\geq f_{p^\prime,0}(\frac{n}{p^\prime})$, which gives
\begin{equation*}
    |A^{p^\prime+m}|^{\frac{2m}{m+p^\prime}}=f_{p^\prime,0}(1)\geq f_{p^\prime,0}(\frac{n}{p^\prime})=(A^{*m}|A^{p^\prime}|^\frac{2n}{p^\prime}A^m)^\frac{m}{m+n}.  
\end{equation*}
This completes the proof.
\end{proof}
We are now in a position to formulate and prove the aforementioned analogue
of Theorem \ref{mia}.

\begin{theorem}\label{cn}
Let $m,n\in \natu$ be such that  $n\leq m$ and $A$ be an invertible which satisfies  \eqref{xukl} with $S=\{m,n+m\}$ and $A^{*n}A^n\leq (A^{*}A)^n$. Then  the following inequalities holds:
\begin{equation}\label{cnier}
 |A^{n}|^\frac{2}{n}\leq   |A^{m+n}|^\frac{2}{m+n}\leq |A^{2m+n}|^\frac{2}{2m+n}\leq \dots\leq|A^{rm+n}|^\frac{2}{rm+n} 
\end{equation}
for $r\in \natu$.
\end{theorem}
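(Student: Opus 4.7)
The plan is to prove, by induction on $r\in\natu$, the stronger claim
\[
C(r)\colon\quad|A^{rm+n}|^{2((r-1)m+n)/(rm+n)}\geq|A^{(r-1)m+n}|^2,
\]
and then deduce \eqref{cnier} from $C(r)$ by a single application of the L\"owner--Heinz inequality with exponent $1/((r-1)m+n)\in(0,1]$, which converts $C(r)$ into the $r$-th link $|A^{rm+n}|^{2/(rm+n)}\geq|A^{(r-1)m+n}|^{2/((r-1)m+n)}$ of the chain. So the whole theorem reduces to establishing $C(r)$ for every $r\geq 1$.

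For the base case $C(1)$, I use only the hypotheses: $A^{*(m+n)}A^{m+n}=(A^*A)^{m+n}$ together with the spectral calculus gives $|A^{m+n}|^{2n/(m+n)}=(A^*A)^n$, and then $A^{*n}A^n\leq(A^*A)^n$ yields $|A^{m+n}|^{2n/(m+n)}\geq A^{*n}A^n=|A^n|^2$. For the inductive step, assuming $C(r-1)$ with $r\geq 2$, I apply Lemma \ref{mia2} at $p=(r-1)m+n$ with $k=n/p$ (its hypothesis being provided by Theorem \ref{mpj}) and with the particular choice $\delta=(r-2)m+n$. The point of this $\delta$ is that $\delta+m=p$, which causes the exponent of $f_{p,\delta}(l)$ to collapse to $1$ at the lower endpoint $l_{\min}:=\delta/p=((r-2)m+n)/((r-1)m+n)$ (which dominates $n/p$ for $r\geq 2$), so
\begin{align*}
f_{p,\delta}(l_{\min}) &= A^{*m}|A^{(r-1)m+n}|^{2((r-2)m+n)/((r-1)m+n)}A^m,\\
f_{p,\delta}(1) &= |A^{rm+n}|^{2((r-1)m+n)/(rm+n)}.
\end{align*}
Monotonicity then gives $f_{p,\delta}(1)\geq f_{p,\delta}(l_{\min})$, and after conjugating $C(r-1)$ by $A^m$ the right-hand side is bounded below by $A^{*m}|A^{(r-2)m+n}|^2A^m=|A^{(r-1)m+n}|^2$, producing $C(r)$.

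The main obstacle is identifying the correct strengthening: the desired link $|A^{p+m}|^{2/(p+m)}\geq|A^p|^{2/p}$ cannot be read off Lemma \ref{mia2} directly, since passing back from $C(r)$ to this link requires L\"owner--Heinz with exponent $1/((r-1)m+n)\leq 1$, whereas the reverse step would need raising an operator inequality to a power greater than one, which fails in general. The choice $\delta=p-m$ is forced by the demand that the conjugation $A^{*m}(\cdot)A^m$ applied to $C(r-1)$ produce exactly the lower bound read off the monotonicity of $f_{p,\delta}$ at step $r$; any other $\delta$ yields exponents that prevent the induction from closing.
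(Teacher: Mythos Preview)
Your proof is correct and follows essentially the same route as the paper's. Your strengthened claim $C(r)$ is precisely the paper's inequality \eqref{cccc} written with $p=(r-1)m+n$, and your inductive step---invoking Theorem \ref{mpj} to feed the hypothesis of Lemma \ref{mia2} at the index $p=(r-1)m+n$ with $\delta=p-m$, then comparing $f_{p,\delta}$ at $l=\delta/p$ and $l=1$ and conjugating $C(r-1)$ by $A^m$---matches the paper's use of $f_{p+m,p}$ at $l=p/(p+m)$ and $l=1$ under the reindexing $p_{\text{paper}}=(r-2)m+n$; the final L\"owner--Heinz step with exponent $1/((r-1)m+n)$ is also identical.
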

\begin{proof}
We use induction to prove that
\begin{equation}\label{cccc}
    |A^{m+p}|^\frac{2p}{m+p}\geq |A^{p}|^2,
\end{equation}
for $p=n+im$, where $i\in \mathbb{Z}_+$.
It is easy to verify that \eqref{cccc} holds for $p=n$. Suppose that above inequalitity holds for a given integer $p$. By Theorem \ref{mpj} the following inequality holds
\begin{equation*}
(A^{*m}|A^{p+m}|^\frac{2n}{p+m}A^m)^\frac{m}{m+n}\geq|A^m|^2.
\end{equation*}
Applying Lemma \ref{mia} implies
\begin{equation*}
f_{p+m,p}(l)=(A^{*m}|A^{p+m}|^{2l}A^m)^\frac{p+m}{(p+m)l+m},
\end{equation*}
is increasing for $l\geq \max \{\frac{n}{p+m},\frac{p}{p+m} \}.$
By induction hypothesis \eqref{cccc} and the monotonicity of $f_{p+m,p}$, we have
\begin{align*}
    |A^{p+m}|^2&=A^{*m}|A^{p}|^2A^m\leq A^{*m}|A^{m+p}|^\frac{2p}{p+m}A^m\\&=f_{p+m,p}(
    \frac{p}{p+m})\leq f_{p+m,p}(1)\\&=|A|^\frac{2(p+m)}{p+2m},
\end{align*}
which completes the induction argument.

Now we prove \eqref{cnier}.
Applying the L\"owner-Heinz inequality with the power $\frac{1}{p}$ 
and \eqref{cccc}, we get
\begin{equation*}
|A^{m+p}|^\frac{2}{m+p}\geq |A^{p}|^\frac{2}{p},
\end{equation*}
which completes the proof.
\end{proof}

 \bibliographystyle{amsalpha}

\begin{thebibliography}{99}
   \bibitem{f2}A. Aluthge,  Some generalized theorems on $p$-hyponormal operators, {\em Integr. Equ. Oper. Theory } 24, 497-501 (1996)
    \bibitem{aw3} A. Aluthge, D. Wang, An operator inequality which implies paranormality, {\em Math. Inequal. Appl} 2, 113-119 (1999)

\bibitem{aw4} A. Aluthge, D. Wang,  Powers of $p$-hyponormal operators, {\em J. Inequal.
Appl.} 3, 279-284  (1999)
 \bibitem{ben} J.Bendat, S.Sherman, Monotone and convex operator functions,
{\em Trans. Amer. Math. Soc.} 79 (1955), 58-71.
  
   \bibitem{brow} A. Brown, On a class of operators,
{\em Proc. Amer. Math. Soc.} {\bf 4}, 723-728 (1953)
\bibitem{dym} P. Budzy{\'n}ski, P. Dymek, Z. J. Jab{\l}o\'nski, J. Stochel, Subnormal weighted shifts on directed trees and composition
operators in L
2
-spaces with non-densely defined powers, Abs. Appl. Anal. 2014,
791-817 (2014)
\bibitem{planeta} P. Budzy{\'n}ski, P. Dymek, A. P{\l}aneta, M. Ptak. Weighted shifts on directed trees. Their multiplier algebras, reflexivity and decompositions. arXiv preprint arXiv:1702.00765 (2017).
 
\bibitem{qext}P.Budzy{\'n}ski, P. Dymek,  A. P{\l}aneta, Quasinormal extensions of subnormal operator-weighted composition operators in ℓ 2-spaces.{\em J. Math. Anal. Appl} 452, 27-46 (2017)


 \bibitem{bdp}P. Budzy{\'n}ski,  P. Dymek,  M. Ptak. Analytic structure of weighted shifts on directed trees. {\em Mathematische Nachrichten} 290, 1612-1629 (2017)
   \bibitem{b-j-j-s} P. Budzy\'{n}ski, Z. J.
Jab{\l}o\'nski, I. B. Jung, J. Stochel, On unbounded
composition operators in $L^2$-spaces, {\em Ann. Mat.
Pur. Appl.} 193, 663-688 (2014)
\bibitem{triv}P. Budzy{\'n}ski, Z. J. Jab{\l}o\'nski, I. B. Jung, J. Stochel, A subnormal weighted shifts on a directed trees whose $n$-th
powers has trivial domain, {\em J. Math. Anal. Appl}
 435, 302-314 (2016)
 \bibitem{adv}P. Budzy{\'n}ski, Z. J. Jab{\l}o\'nski, I. B. Jung, J. Stochel, Subnormality of unbounded composition
operators over one-circuit directed graphs: exotic examples, {\em Adv. Math.} 310, 484-556 (2017)


\bibitem{chav} S. Chavan, S. Trivedi, An analytic model for left-invertible weighted shifts on directed trees,
J. London Math. Soc. 94, 253-279 (2016)
\bibitem{choi} M.D. Choi, A Schwarz inequality for positive linear maps on $C
^*$-algebras, {\em Illinois J. Math.}, 18,
565-574 (1974)


   \bibitem{conw}  Conway, J.: The Theory of Subnormal Operators. Math. Surveys Monographs,
vol. 36. Amer. Math. Soc., Providence (1991)
\bibitem{davi}Ch. Davis, A Schwarz inequality for convex operator functions, Proc. Amer. Math. Soc. 8, 42-44 (1957)
\bibitem{doug}Douglas, R.G.: On Majorization, Factorization, and Range Inclusion of Operators on Hilbert Space.  Proc. Amer. Math. Soc. 17, 413-415 (1966)


\bibitem{furuta}T. Furuta, $A \geq B \geq 0$ assures $(B^rA^pB^r)^\frac{1}{q} \geq B^\frac{p+2r}{q}$ for $r \geq 0$, $p \geq 0$, $q \geq 1$ with
$(1 + 2r)q \geq (p + 2r)$, {\em Proc. Amer. Math. Soc.} 101, 85-88 (1987)


\bibitem{fy1} T. Furuta, M. Yanagida,  On powers of $p$-hyponormal operators,{\em Sci. Math.}, 279-284 (1999)

\bibitem{fy2} T. Furuta, M. Yanagida,  On powers of $p$-hyponormal and log-hyponormal operators {\em J. Inequal. Appl} 5, 367-380 (2000)
\bibitem{gal} J. A. Gallian, M. Reid,  . Abelian forcing sets. {\em The American Mathematical Monthly} 100, 580-582 (1993)

\bibitem{geh}G. P. Geh\'er, Asymptotic behaviour and cyclic properties of weighted shifts on directed trees, J. Math. Anal. Appl. 440, 14-32 (2016)
\bibitem{hal} M. Hall, Combinatorial Theory, John Wiley \& Sons, 1998.

\bibitem{hansen} F. Hansen, An operator inequality,{\em Math. Ann.} 246, 249-250 (1980)
\bibitem{hans2} F. Hansen,  Operator inequalities associated with Jensen's inequality in: T.M. Rassias (Ed.), Survey on Classical Inequalities, Kluwer Academic Publishers, Dordrecht, Netherlands , 67–98 (2000)

\bibitem{hans3}F. Hansen, G. K. Pedersen. Jensen's inequality for operators and L\"owner's theorem. Mathematische Annalen 258,  229-241 (1982)
\bibitem{l9}E. Heinz, Beitr\"age zur St\"orungstheorie der Spektralzerlegung,{\em Math. Ann.} 123, 415-438 (1951)
\bibitem{embry} M. R. Embry, A generalization of the Halmos-Bram
criterion for subnormality, {\em Acta Sci. Math.
$($Szeged$)$}, {\bf 35}, 61-64 (1973)
\bibitem{mia} M. Ito,  Several properties on class A including $p$-hyponormal and log-hyponormal operators-{\em Math. Inequal. Appl,}  149-165(1999)

\bibitem{ito} M. Ito, Generalizations of the results on powers of $p$-hyponormal operators, {\em J. Inequal. Appl.} 6, 1–15 (2000)
 
\bibitem{iyy} M. Ito, T. Yamazaki, M. Yanagida,  The polar decomposition of the product of
operators and its applications,{\em Int. Eq. Op. Th.} 49, 461–472 (2004)

\bibitem{memo} Z. J. Jab{\l}o\'{n}ski, I. B. Jung,
J. Stochel, Weighted shifts on directed trees. Mem.
Am. Math. Soc. 216(1017), viii+107pp (2012)



 \bibitem{9} Z. J. Jab{\l}o\'{n}ski, I. B. Jung,
J. Stochel, A non-hyponormal operator generating
Stieltjes moment sequences, {\em J. Funct. Anal.},
{\bf 262},  3946-3980 (2012)

\bibitem{dodana} Z. J. Jab{\l}o\'{n}ski, I. B. Jung,
J. Stochel, Operators with absolute continuity properties:
an application to quasinormality, {\em Studia Math.},
 {\bf 216}, 11-30 (2013)
 
   \bibitem{jabl} Z. J. Jab{\l}o\'{n}ski, I. B. Jung,
J. Stochel, Unbounded quasinormal operators revisited,
{\em Integr. Equ. Oper. Theory} {\bf 79},
135-149 (2014)

 \bibitem{abs}Z. J. Jab{\l}o\'nski, I. B. Jung, J. Stochel Operators with absolute continuity properties: an application to quasinormality,
Stud. Math. 215, 11-30 (2013)

\bibitem{sque}Z. J. Jab{\l}o\'nski, I. B. Jung, J. Stochel, A hyponormal weighted shift on a directed tree whose square has trivial
domain, Proc. Amer. Math. Soc. 142 (2014), 3109-3116.

\bibitem{Jib} A. A. S. Jibril, On operators for which
$T^{*2} T^2 = (T^*T)^2$, {\em Int. Math. Forum}, {\bf
46}, 2255-2262 (2010)
\bibitem{kaj}H. Kaji,  On the tangentially degenerate curves. J. Lond. Math. Soc. 33, 430-440 (1986)
  
\bibitem{kauf} W. E. Kaufman, Closed operators
and pure contractions in Hilbert space, {\em Proc.
Amer. Math. Soc.} {\bf 87} , 83-87 (1983)
\bibitem{kraus} F. Kraus, Uber konvex matrixfunktionen,  Math. Z., 41, 18–42 (1936)
\bibitem{l10} K. L\"owner, Uber monotone Matrixfunktionen, {\em Math. Z.}, 38 (1934), 177-216.
  \bibitem{maj} W. Majdak, A lifting theorem for unbounded quasinormal
operators, {\em J. Math. Anal. Appl.} {\bf 332}, 934-946 (2007)
 \bibitem{mart}R. A. Mart\'inez-Avenda\~no, Hypercyclicity of shifts on weighted $L^p$ spaces of directed trees,
J. Math. Anal. Appl. 446, 823-842 (2017)
\bibitem{mary} JSI Mary, S Panayappan. Some properties of class $A(k)$ operators and their hyponormal transforms {\em Glasgow Math. J.} 49, 133-143 (2007) 
\bibitem{petz} D. Petz, On the equality in Jensen's inequality for operator convex functions. Integr. Equ. Oper. Theory 9, 744-747 (1986)
\bibitem{ja} P. Pietrzycki,  The single equality $A^{ *n}A^n=(A^* A)^n$ does not imply the quasinormality of weighted shifts on rootless directed trees,  {\em J. Math. Anal. Appl} 435, 338-348 (2016)

\bibitem{rud} W. Rudin, Functional Analysis, McGraw-Hill, International Editions (1991)

%  \bibitem{shield} A. L. Shields, Weighted shift operators and analytic function theory, Topics in operator
%theory, pp. 49-128. Math. Surveys, No. 13, Amer. Math. Soc., Providence, R.I., 1974.

   \bibitem{szaf} J. Stochel, F. H. Szafraniec,
On normal extensions of unbounded operators. II, {\em
Acta Sci. Math. $($Szeged$)$}, {\bf 53} (1989),
153-177.

\bibitem{trep}J. Trepkowski,  Aluthge transforms of weighted shifts on directed trees. J. Math. Anal. Appl 425, 886-899 (2015)
\bibitem{uch} M. Uchiyama,  Operators which have commutative polar decompositions. Contributions to Operator Theory and its Applications. {\em Birkh\"auser Basel},  197-208 (1993)
\bibitem{uch2} M. Uchiyama   Inequalities for semibounded operators and their applications
to log-hyponormal operators, {\em In: K\'erchy L., Gohberg I., Foias C.I., Langer H. (eds) Recent Advances in Operator Theory and Related Topics. Operator Theory: Advances and Applications, vol 127. Birkh{\"a}user,} Basel (2001).
  \bibitem{weid} J. Weidmann, {\it Linear operators in
Hilbert spaces}, Springer-Verlag, Berlin, Heidelberg,
New York, {\bf 1980}.
\bibitem{yama} T.Yamazaki,   Extensions of the results on $p$-hyponormal and loghyponormal
operators by Aluthge and Wang, {\em SUT J. Math.}, 35, 139-148 (1999)

   \end{thebibliography}
   
\end{document}